\DeclareSymbolFont{AMSb}{U}{msb}{m}{n}
\documentclass[11pt,a4paper,reqno,noamsfonts]{amsart}
\makeatletter 
\newcommand{\mylabel}[2]{#2\def\@currentlabel{#2}\label{#1}}
\renewcommand\@biblabel[1]{#1.}
\makeatother
\usepackage[english]{babel}
\usepackage[dvipsnames]{xcolor}
\usepackage{graphicx,pifont} 
\usepackage[utopia]{mathdesign}
\usepackage{bbm}
\usepackage{mathtools}
\usepackage[scr,scaled=1.0]{rsfso}
\usepackage[bb=boondox]{mathalfa}
\usepackage{tikz-cd}
\usepackage{stmaryrd}
\usepackage{tikz}
\usepackage{tkz-tab}
\usetikzlibrary{arrows,automata,decorations.markings,cd,arrows.meta}
\usetikzlibrary{calc}
\usetikzlibrary{positioning}
\usepackage{extarrows}

 \usepackage[pdftex,
                paper=a4paper,
                portrait=true,
                textwidth=160mm,
                textheight=247mm,
                tmargin=2.5cm,
                marginratio=1:1]{geometry}

\usepackage[utf8]{inputenc}
\usepackage{braket,caption,comment,mathtools,stmaryrd}
\usepackage[usestackEOL]{stackengine}
\usepackage{multirow,booktabs,microtype,relsize}
\usepackage[colorlinks,bookmarks]{hyperref} %
      \hypersetup{colorlinks,%
            citecolor=olive,%
            filecolor=black,%
            linkcolor=teal,%
            urlcolor=green}
      \numberwithin{equation}{section}
\usepackage[capitalise]{cleveref}

\newtheorem{theorem}{Theorem}[section]
\newtheorem{proposition}[theorem]{Proposition}
\newtheorem{lemma}[theorem]{Lemma}

\newtheorem{corollary}[theorem]{Corollary}

\theoremstyle{definition}

\newtheorem{example}[theorem]{Example}

\theoremstyle{remark}

\renewcommand{\AA}{\ensuremath{\mathbbmss{A}}}

\newcommand{\GG}{\ensuremath{\mathbbmss{G}}}
\newcommand{\PP}{\ensuremath{\mathbbmss{P}}}
\newcommand{\ZZ}{\ensuremath{\mathbbmss{Z}}}

\newcommand{\Ocal}{\ensuremath{\mathscr{O}}}

\newcommand{\uss}{\ensuremath{\mathrm{ss}}}

\newcommand{\gfrak}{\ensuremath{\mathfrak{g}}}
\newcommand{\mfrak}{\ensuremath{\mathfrak{m}}}
\newcommand{\pfrak}{\ensuremath{\mathfrak{p}}}
\newcommand{\qfrak}{\ensuremath{\mathfrak{q}}}

\def\uev{{\mathrm{ev}}}

\DeclareMathOperator{\id}{id}
\DeclareMathOperator{\ev}{ev}
\DeclareMathOperator{\Hom}{Hom}

\DeclareMathOperator{\Spec}{Spec}
\DeclareMathOperator{\sSpec}{\mathbf{Spec}}
\DeclareMathOperator{\Proj}{Proj}
\DeclareMathOperator{\sProj}{\mathbf{Proj}}
\DeclareMathOperator{\Sets}{Sets}
\DeclareMathOperator{\sVect}{sVect}
\DeclareMathOperator{\sAlg}{sAlg}
\DeclareMathOperator{\sSch}{sSch}
\DeclareMathOperator{\Groups}{Groups}

\DeclareMathOperator{\Lie}{Lie}
\DeclareMathOperator{\ad}{ad}
\DeclareMathOperator{\Sym}{Sym}
\DeclareMathOperator{\tot}{tot}

\newcommand{\lbbar}{\{\kern-0.76ex\{}
\newcommand{\rbbar}{\}\kern-0.76ex\}}

\newcommand{\medwedge}{\mathbin{\scalebox{1.2}{$\wedge$}}}

\newcommand*{\sbullet}{\raisebox{0.1ex}{\scalebox{0.6}{$\bullet$}}}

\title{Geometric Invariant Theory for Affine Superschemes}

\author{Alexander Quintero V\'{e}lez} 
\address{Departamento de Matem\'{a}ticas\\ Universidad Nacional de Colombia Sede Medell\'{i}n \\ Carrera 65 $\#$ 59A--110 \\ Medell\'{i}n \\ Colombia}
\email{aquinte2@unal.edu.co}

\author{Pedro Rizzo} 
\address{Instituto de Matem\'{a}ticas \\ Universidad de Antioquia \\ Calle 62 $\#$ 52-59 \\ Medell\'{i}n \\ Colombia}
\email{pedro.hernandez@udea.edu.co}

\author{Alexander Torres-Gomez} 
\address{Instituto de Matem\'{a}ticas \\ Universidad de Antioquia \\ Calle 62 $\#$ 52-59 \\ Medell\'{i}n \\ Colombia}
\email{galexander.torres@udea.edu.co}

\begin{document}

\begin{abstract}
We develop Geometric Invariant Theory for affine superschemes under the action of reductive algebraic supergroups, formulating the theory in terms of coordinate Hopf superalgebras in order to accommodate anticommuting and nilpotent variables. Within this algebraic setting, we construct affine superquotients and establish their basic structural properties under suitable hypotheses. We then prove a supergeometric analogue of the Hilbert--Mumford criterion, giving a numerical test for the semistability and stability of points under natural constraints on the supergroup coaction. We show that this numerical criterion can be used to construct the GIT superquotient of these superschemes, and we give examples illustrating how the supergeometric GIT superquotient differs from its classical counterpart.\\

\noindent \textsc{Key words:} Affine superschemes, Hopf superalgebras, super Harisch-Chandra pairs, GIT superquotient, super invariants, supercharacters.
\end{abstract}

\subjclass[2020]{Primary: 14L24, 14A24, 16S38; Secondary: 81T75}

\maketitle

\section{Introduction}
Geometric Invariant Theory, originally developed by Mumford \cite{Mumford1961,Mumford1965} and henceforth referred to as GIT, provides one of the most comprehensive frameworks for systematically analyzing geometric invariants. At its core, GIT addresses the construction of quotient spaces for algebraic reductive group actions on varieties or schemes using the machinery of algebraic geometry. Today, it stands as a cornerstone in the theory of moduli spaces and remains central to research in commutative algebra and algebraic geometry.

Over the last three decades, the scope of GIT has expanded significantly, driven largely by its interactions with representation theory and noncommutative algebraic geometry. A major catalyst for this progress was the work of King \cite{King1994}, who used GIT to construct and classify moduli spaces of quiver representations. This foundational approach subsequently revealed that many classical geometric objects, such as projective toric varieties \cite{CrawSmith2008}, quiver flag varieties \cite{Craw2011}, and Mori Dream Spaces \cite{CrawWinn2013}, could be explicitly realized as fine moduli spaces of quivers. Parallel to these developments, noncommutative algebraic geometry was shaped by the introduction of noncommutative crepant resolutions by Van den Bergh \cite{VanDenBergh2004, VanDenBergh2004NCCR}. These fields connect through the insight that determining the moduli spaces of representations for such noncommutative crepant resolutions allows one to extract genuine resolutions of singularities. This theme also encompasses the study of the McKay correspondence \cite{CassensSlodowy1998, CrawIshii2004} and dimer models \cite{IshiiUeda2015}, as well as subsequent combinatorial refinements in noncommutative toric geometry involving cellular resolutions from superpotentials \cite{CrawQuinteroVelez2012} and geometric Reid's recipe \cite{BocklandtCrawQuinteroVelez2015}. More broadly, this line of research has continued to develop through numerous related contributions \cite{hoskins2018parallels,hoskins2018stratifications, craw2018multigraded,bellamy2020birational,craw2021punctual,bellamy2026birational}. At present, these intertwined strategies stand at the core of contemporary milestones in homological algebra, most notably within the homological minimal model program \cite{Wemyss2018} and the resolution of the noncommutative Bondal--Orlov conjecture \cite{IyamaWemyss2013}. Ultimately, these research lines have advanced our understanding of derived categories of coherent sheaves, where the study of autoequivalences and derived symmetries via the variation of GIT quotients \cite{HalpernLeistner2015, HalpernLeistnerShipman2016, BallardFaveroKatzarkov2019} has given a clearer geometric and homological picture of algebraic quotient constructions.

This same noncommutative perspective has, in recent years, extended into a rather different direction: that of superspaces and superschemes, which have gained increasing prominence within algebraic geometry. These objects arise as natural extensions of the concept of a supermanifold, introduced by physicists in the 1970s to provide a rigorous mathematical framework for supersymmetry. Intuitively, a supermanifold generalizes the classical notion of manifold by incorporating, alongside the usual ``bosonic'' coordinates, ``fermionic'' coordinates that anticommute. This idea was subsequently formalized and extended to the algebraic setting through the foundational work of Leites, Manin, Kapranov, Polishchuk, Bruzzo, Hernández-Ruipérez, and others \cite{Leites1974,Manin1988,Manin1991,KapranovVasserot2011,BruzzoHernandezRuiperezPolishchuk2023}, giving rise to the theory of superschemes, which today occupies a central place within noncommutative geometry and continues to deepen our understanding of space and geometry.

The main objective of this article is to lay down the foundational constructions for a systematic implementation of GIT for affine superschemes. The presence of anticommuting and nilpotent variables means that points of a superscheme cannot, in general, be treated as ordinary points of a topological space, so actions and quotients must instead be formulated intrinsically at the level of coordinate superalgebras. To ensure that affine quotients and algebraic supergroup actions remain well-defined, it becomes essential to resort to a purely algebraic framework. Emulating the approach initiated by King, we formulate our theory entirely in terms of coactions of the Hopf superalgebras associated to such algebraic supergroups.

Within this framework, we formally define affine superquotients and establish their corresponding theory, proving their existence and core algebraic properties under appropriate conditions. Building upon this foundation, the primary result of this work is the formulation and proof of a supergeometric analogue of the Hilbert--Mumford criterion, which translates the stability and semistability of topological points into a purely numerical test. Proving this main result, however, is far from a direct generalization of the classical setting, as the coupling between even and odd variables introduces serious obstructions to evaluating limits. To overcome these difficulties, we introduce two structural conditions, ``odd-infinitesimally decoupled coactions'' and ``polynomial coextensions'', which delimit the setting in which the relevant geometric arguments can be carried out. By imposing these conditions, we show how Mumford's and King's original insights can be adapted into a rigorous, self-contained supergeometric machinery.

This work is motivated by the need to extend King's approach to superquivers. This direction follows naturally from the homological behavior of projective superspaces. In a forthcoming companion paper \cite{QVRHTG2026art1}, we establish a Beilinson-type theorem showing that an exceptional collection of line bundles generates the derived category of coherent sheaves on a projective superspace, and we show that the resulting ``quiver of sections'' is, in its algebraic structure, precisely a superquiver. Building on this result, constructing the corresponding moduli spaces of superquiver representations is an ongoing project of the authors \cite{QVRHTG2026art2}. As part of the same broader program, we are also developing explicit constructions of several key supergeometric objects, including Hilbert superschemes, supergeometric analogues of Kronheimer's construction, and Nakajima quiver supervarieties. The algebraic and numerical results established here lay the necessary groundwork for all of these projects and are the basis on which they depend.

A second motivation comes from string theory, specifically from gauged linear sigma models (GLSMs). In the physics literature \cite{GuZou2019,Zou2025,ErLiuTan2026}, there is a well-known duality between the GLSM for a classical quintic Calabi–Yau hypersurface and the GLSM whose target space is the projective superspace $\PP^{4\vert{}1}$, suggesting that many Calabi–Yau varieties have supergeometric duals. Mathematically, GLSMs have been formalized by Fan, Jarvis, and Ruan \cite{FanJarvisRuan2018} and by Favero and Kim \cite{FaveroKim2020} using algebraic quotients via GIT. To extend these mathematical formulations to the super setting and establish such dualities with complete geometric rigor, a well-defined theory of GIT superquotients becomes indispensable. The framework introduced in this article provides the necessary foundations required to systematically implement these supergeometric quotients.

In relation to the current literature, we note that while this article was in preparation, Amrutiya and Dubey \cite{AmrutiyaDubey2026} independently introduced a partial development of GIT for superschemes. Their approach focuses on the construction of quotients under the action of ordinary algebraic groups, alongside stability criteria and examples tailored to representations of ordinary quivers in super vector spaces. This differs substantially from our formulation, where we develop the theory of quotients under the action of general reductive algebraic supergroups. In fact, the framework presented in \cite{AmrutiyaDubey2026} is recovered as the special case where the odd dimension of the acting supergroup vanishes. Our construction is therefore a strict generalization of theirs.

The remainder of this article is organized as follows. Section~\ref{sec:2} reviews the necessary preliminaries on affine superschemes, Hopf superalgebras, and algebraic supergroups, and fixes the notation and conventions used throughout the paper; for each topic, references are provided for further detail. Section~\ref{sec:3} develops the core algebraic framework of our setup by formalizing algebraic supergroup actions as coactions on coordinate superalgebras. From there, we introduce invariant subsuperalgebras and use them to construct naive affine superquotients, illustrating along the way several phenomena that have no counterpart in the ordinary setting. Finally, Section~\ref{sec:4} contains the main contributions of this work: we extend King's classical GIT construction to the supergeometric setting. We generalize the notion of relative invariants associated to a character, adapt the conditions for semistability and stability, and prove a numerical criterion that allows us to explicitly construct the GIT superquotient of affine superschemes, with further examples contrasting these new phenomena with the ordinary case.


\section{Background material}\label{sec:2}
This section collects the preliminary material needed throughout the paper. We briefly review the basic notions of affine superschemes, Hopf superalgebras, and affine algebraic supergroups, fixing notation and conventions that will be used in the sequel.

\subsection{Superalgebraic setup}
We start with the basic superalgebraic structures: superalgebras together with the necessary super linear algebra. The material presented here is standard, and we keep the discussion brief. 

Let $k$ be a fixed algebraically closed field. All objects considered here and in what follows are defined over $k$. By a \emph{super vector space}, we mean a $\mathbb{Z}_2$-graded vector space $V = V_0 \oplus V_1$. The elements of $V_0$ are called \emph{even} and those of $V_1$ \emph{odd}. Such elements are said to be \emph{homogeneous}, and for a homogeneous element $v$ we write $\lvert v \rvert$ for its parity. If $V_0$ and $V_1$ have dimensions $d_0$ and $d_1$, respectively, we say that $V$ has dimension $d_0 \vert d_1$. For super vector spaces $V$ and $V'$, the morphisms from $V$ to $V'$ are linear maps from $V$ to $V'$ which preserve the gradings. Such maps are often called \emph{even linear maps}. They form a vector space, denoted by $\Hom(V,V')$. We write $\sVect_k$  for the category of super vector spaces and even linear maps.

One of the fundamental features of the category $\sVect_k$ is that it carries a natural monoidal structure. For super vector spaces $V$ and $V'$, their tensor product is $V \otimes_k V'$ with homogeneous components
\begin{equation}
(V \otimes_k V')_0 = (V_0 \otimes_k V'_0) \oplus (V_1 \otimes_k V'_1), \quad (V \otimes_k V')_1 = (V_0 \otimes_k V'_1) \oplus (V_1 \otimes_k V'_0).
\end{equation}
Associated with this tensor product, via the usual adjunction, is the so-called internal $\Hom$, denoted by $\underline{\Hom}(V,V')$. This is the super vector space of all linear maps from $V$ to $V'$, where the even maps preserve the grading and the odd maps reverse it.

By a \emph{superalgebra}, we mean an algebra object $A$ in the category $\sVect_k$. Explicitly, this means that $A = A_0 \oplus A_1$ is a super vector space equipped with an associative product and a unit such that $A_i A_j \subseteq A_{i+j}$ for all $i,j \in \ZZ_2$. We say that $A$ is \emph{commutative} if $ab=(-1)^{\lvert a\rvert\lvert b\rvert}ba$ for all homogeneous elements $a,b \in A$. A commutative superalgebra $A$ is said to be \emph{local} if it has a unique maximal graded ideal $\mfrak$; equivalently, $A_0$ is a local algebra with maximal ideal $\mfrak \cap A_0$. It is said to be of \emph{finite type} if its even part $A_0$ is a finitely generated algebra and its odd part $A_1$ is a finitely generated $A_0$-module. For superalgebras $A$ and $A'$, the morphisms from $A$ to $A'$ are even linear maps which preserve the product and the unit. Commutative superalgebras are the only ones that will concern us here, and we write $\sAlg_k$ for their category.

Let $A$ be a commutative superalgebra. An \emph{$A$-supermodule} is a super vector space $M = M_0 \oplus M_1$ which is an $A$-module in the usual sense and, in addition, the action satisfies $A_i \cdot M_j \subseteq M_{i+j}$ for all $i,j \in \ZZ_2$. Morphisms of $A$-supermodules are even linear maps that commute with the $A$-action. An $A$-supermodule $M$ is said to be \emph{free} if it is free as an $A$-module with a basis consisting of homogeneous elements. The most important examples are the free $A$-supermodules $A^{m \vert n}$ of finite rank $m \vert n$, where $m$ and $n$ are non-negative integers. Such a supermodule has a homogeneous basis of $m$ even and $n$ odd elements, with even and odd components $(A^{m \vert n})_0 = A_0^{m} \oplus A_1^{n}$ and $(A^{m \vert n})_1 = A_1^{m} \oplus A_0^{n}$. Morphisms between different free supermodules $A^{m \vert n}$ can, as usual, be described by matrices.

We shall recall one more notion. By a \emph{Lie superalgebra}, we mean a Lie algebra object $\gfrak$ in the category $\sVect_k$. This amounts to a super vector space $\gfrak = \gfrak_0 \oplus \gfrak_1$ together with an even bilinear map $[,]\colon \gfrak \otimes_k \gfrak \to \gfrak$, termed the Lie bracket, which satisfies
\begin{equation}
[x,y] = - (-1)^{\lvert x \rvert \lvert y \rvert} [y,x],
\end{equation}
and
\begin{equation}
(-1)^{\lvert x \rvert \lvert z \rvert} [x,[y,z]] + (-1)^{\lvert y \rvert \lvert z \rvert} [z,[x,y]] + (-1)^{\lvert x \rvert \lvert y \rvert} [y,[z,x]] = 0,
\end{equation}
for all homogeneous elements $x,y,z\in\gfrak$. These identities are the natural modifications of the antisymmetry and Jacobi identities for an ordinary Lie algebra, designed to accommodate the $\ZZ_2$-grading. It should be noted, though, that this definition is most commonly used when Lie superalgebras are considered independently of Lie supergroups. Later, we will recall affine algebraic supergroups, which provide geometric realizations of Lie superalgebras, and see how these two notions are naturally related. This is all we will need here.

\subsection{Affine superschemes}\label{sec:2.2}
We now proceed to recall affine superschemes. For a comprehensive treatment, we refer the reader to \cite{Manin1988, Manin1991,CarmeliCastonFioresi2011, BruzzoHernandezRuiperezPolishchuk2023}. Some preliminary notions are needed first.

By a \emph{superspace}, we mean a pair $X=(\lvert X \rvert,\Ocal_X)$, where $\lvert X \rvert$ is a topological space and $\Ocal_X$ is a sheaf of superalgebras such that the stalk $\Ocal_{X,x}$ is a local superalgebra for all $x \in \lvert X \rvert$. A \emph{morphism} $f \colon X \to Y$ of superspaces is a pair $f=(\lvert f \rvert,f^{\#})$, where $\lvert f \rvert \colon \lvert X \rvert \to \lvert Y \rvert$ is a continuous map and $f^{\#} \colon \Ocal_Y\to \lvert f \rvert_*\Ocal_X$ is a morphism of sheaves satisfying $f_x^{\#}(\mfrak_{\lvert f \rvert (x)})\subseteq\mfrak_x$, where $\mfrak_{\lvert f \rvert(x)}$ and $\mfrak_x$ denote the unique maximal ideals of the stalks $\Ocal_{Y,\lvert f \rvert(x)}$ and $\Ocal_{X,x}$, respectively. Among superspaces, the objects of primary interest are \emph{superschemes}. These are defined to be the superspaces $X=(\lvert X\rvert,\Ocal_X)$ such that $(\lvert X \rvert,\Ocal_{X,0})$ is a scheme and $\Ocal_{X,1}$ is a quasicoherent sheaf of $\Ocal_{X,0}$-modules. Morphisms of superschemes are simply morphisms of the underlying superspaces. We write $\sSch_k$ for the category of superschemes and their morphisms.

We next recall the notion of the \emph{spectrum} of a commutative superalgebra $A$. This is the superscheme $\sSpec A = (\lvert \sSpec A \rvert, \Ocal_{\sSpec A})$, defined as follows. Its underlying topological space is $\lvert \sSpec A\rvert=\Spec A_0$, endowed with the Zariski topology. The structure sheaf $\Ocal_{\sSpec A}$ is the sheaf of commutative superalgebras whose sections over each basic open subset $D(f)\subseteq \lvert\sSpec A\rvert$, with $f\in A_0$, are given by
\begin{equation}
\Ocal_{\sSpec A} (D(f))=A_f=A\otimes_{A_0}(A_0)_f.
\end{equation}
The latter equality follows from the natural $A_0$-module structure on $A$ induced by the inclusion $A_0\hookrightarrow A$, under which $A_f$ inherits its natural $\mathbb Z_2$-graded $A_0$-module structure. For every prime ideal $\pfrak\in \lvert\sSpec A\rvert$, the stalk of $\Ocal_{\sSpec A}$ is the localization
\begin{equation}
\Ocal_{\sSpec A,\pfrak}= A_{\pfrak} =\left\{ \frac{a}{s} \: \Big\vert \: a\in A, s\in A_0\setminus\pfrak\right\},
\end{equation}
which is a local superalgebra with unique maximal ideal $\mfrak_{\pfrak} \oplus (A_{\pfrak})_1$. In fact, $A_{\pfrak}$ is canonically isomorphic to the direct limit $\varinjlim_{f\notin\pfrak}A_f$, where the directed system ranges over all $f\in A_0$ with $f\notin\pfrak$. This shows that the above identification agrees with the usual construction of stalks. 

We should point out that while the construction of the spectrum is purely algebraic and holds for any abstract commutative superalgebra, assuming $A$ to be of finite type ensures that the resulting superscheme possesses appropriate geometric properties. In this setting, the underlying topological space $\lvert \sSpec A \rvert = \Spec A_0$ becomes a classical affine scheme of finite type, thereby establishing a well-behaved Noetherian framework for the theory.

A superscheme $X$ is said to be \emph{affine} if there exists a commutative superalgebra $A$ such that $X\cong\sSpec A$. In this case, we define $k[X] = A$ and call it the \emph{coordinate superalgebra} of $X$. Alternatively, and more abstractly, an affine superscheme $X$ may be viewed as a functor
\begin{equation}
   X \colon \sAlg_k \longrightarrow \Sets,
\end{equation}
where $\Sets$ denotes the category of sets, that is represented by its coordinate superalgebra $k[X]$. That is to say, $X$ assigns to each commutative superalgebra $B$ the set
\begin{equation}
X(B) = \Hom_{\sAlg_k}(k[X], B).
\end{equation}
 This is the functor of points perspective, included here for later reference.
 
When the coordinate superalgebra $k[X]$ of an affine superscheme $X$ is of finite type, the superscheme itself is said to be \emph{algebraic}. This terminological distinction materializes the geometric framework discussed above, emphasizing that the condition of finite type shifts our focus from abstract categorical functors to concrete supergeometric spaces with the finiteness properties characteristic of algebraic geometry.

We supplement this framework with the following natural extensions of standard scheme-theoretic definitions, which will be deployed later in our study. An open subset $U \subseteq X$ of an affine superscheme is called \emph{affine} if there exists an even element $f$ of its coordinate superalgebra  $k[X]$ such that $U$ coincides with the basic open set $D(f)$. Accordingly, a morphism of affine superschemes $f \colon X \to Y$ is said to be \emph{affine} if the preimage of every affine open subset is again an affine open subset. Clearly, since the underlying topological space of an affine superscheme coincides exactly with the classical Zariski spectrum of its even component, an open subset is affine in this supergeometric sense if and only if its underlying topological counterpart is affine in the classical sense. In consequence, a morphism of affine superschemes is affine if and only if the induced continuous map between their underlying topological spaces maps preimages of classical affine open sets to classical affine open sets.

As our last foundational remark, we emphasize that the construction of the spectrum of a commutative superalgebra is functorial. Indeed, given a morphism of commutative superalgebras $\phi \colon A\to A'$, there is an induced morphism of superschemes $\phi^{*} = (\lvert \phi^{*} \rvert, \phi^{* \#}) \colon \sSpec A'\to\sSpec A$ , defined as follows. Let $\phi_0\colon A_0\to A'_0$ denote the restriction of $\phi$ to the even part. As in the classical setting, $\phi_0$ induces a continuous map $\lvert \phi^{*} \rvert\colon\lvert\sSpec A'\rvert\to\lvert\sSpec A\rvert$ given by
\begin{equation}
\lvert \phi^{*} \rvert(\qfrak)=\phi_0^{-1}(\qfrak)
\end{equation}
The morphism of structure sheaves $\phi^{* \#}\colon\Ocal_{\sSpec A}\to\lvert\phi^*\rvert_*\Ocal_{\sSpec A'}$ is induced by the localization morphisms, where for every $f\in A_0$ the morphism $\phi$ naturally induces a morphism $A_f\to A'_{\phi_0(f)}$ given by
\begin{equation}
\frac{a}{f^n}\longmapsto\frac{\phi(a)}{\phi_0(f)^n}. 
\end{equation}
At the level of stalks, for every $\qfrak\in\lvert\sSpec A' \rvert$ and $\pfrak=\phi_0^{-1}(\qfrak)$, the induced morphism $A_{\pfrak}\to A'_{\qfrak}$ is given by
\begin{equation}
\frac{a}{s}\longmapsto\frac{\phi(a)}{\phi_0(s)}.
\end{equation}
The preimage of the maximal ideal $\mfrak_{\qfrak}\oplus(A'_{\qfrak})_1$ is precisely $\mfrak_{\pfrak}\oplus(A_{\pfrak})_1$, so that $\phi^{*\#}_{\pfrak}$ is a local morphism. Consequently, the assignment sending $A$ to $\sSpec A$ extends to a contravariant functor
\begin{equation}
\sSpec\colon\sAlg_k\longrightarrow\sSch_k,
\end{equation}
whose essential image is precisely the full subcategory of affine superschemes.

\subsection{Hopf superalgebras}
We now turn to Hopf superalgebras. A detailed treatment can be found in \cite{GouldZhangBracken1993,DongHuang2011,Westra2009}. We refer the reader to these works for any terms not explicitly defined here.

First, one exploits the monoidal structure of $\sVect_k$ to describe superalgebras in terms of arrows and diagrams. Thus, a superalgebra is a triple $(A,m,u)$, where $A$ is a super vector space, and $m \colon A \otimes_k A \to A$ and $u \colon k \to A$ are even linear maps encoding the usual associativity and unit axioms, respectively. The commutativity of a superalgebra can be expressed via the graded flip $\sigma \colon A \otimes_k A \to A \otimes_k A$, given on homogeneous elements by $\sigma (a \otimes b) = (-1)^{\lvert a \rvert \lvert b \rvert} b \otimes a$, by requiring $m \circ \sigma = m$. A morphism of superalgebras is an even linear map preserving both $m$ and $u$ under the induced tensor structures. Reversing the arrows in these definitions dually yields the notion of a supercoalgebra $(C,\Delta,\varepsilon)$, where the even linear maps $\Delta \colon C \to C \otimes_k C$ and $\varepsilon \colon C \to k$ specify a coassociative coproduct and a counit, respectively. Cocommutativity is similarly defined via $\sigma \circ \Delta = \Delta$ and morphisms of supercoalgebras are even linear maps preserving both $\Delta$ and $\varepsilon$.

A \emph{superbialgebra} is a quintet $(B,m,u,\Delta,\varepsilon)$, where $(B,m,u)$ is a superalgebra and $(B,\Delta,\varepsilon)$ is a supercoalgebra, such that the maps $\Delta$ and $\varepsilon$ are also morphisms of superalgebras. Such a superbialgebra is called a \emph{Hopf superalgebra} if it is additionally equipped with an even linear map $S \colon H \to H$ such that the following diagram commutes:
\begin{equation}
\begin{tikzcd}[row sep=2.5em, column sep=3.0em]
H \otimes_k H  \arrow[d,-{To[length=2.5pt, width=4pt]},"\id_H \otimes S"'] &
H \arrow[l,-{To[length=2.5pt, width=4pt]},"\Delta"'] \arrow[r,-{To[length=2.5pt, width=4pt]},"\Delta"] \arrow[d,-{To[length=2.5pt, width=4pt]},"u \circ \varepsilon"] &
H \otimes_k H \arrow[d,-{To[length=2.5pt, width=4pt]},"S \otimes \id_H"] \\
H \otimes_k H \arrow[r,-{To[length=2.5pt, width=4pt]},"m"] &
H &
H \otimes_k H  \arrow[l,-{To[length=2.5pt, width=4pt]},"m"'] 
\end{tikzcd}
\end{equation}
As is standard, the map $S$ is called the \emph{antipode} of $H$. A \emph{morphism of Hopf superalgebras} is an even linear map $\phi \colon H \to H'$ that is both a morphism of superalgebras and a morphism of supercoalgebras, and satisfies the compatibility condition $\phi \circ S = S' \circ \phi$. 

The antipode $S$ in a Hopf superalgebra $H$ fulfills this role of inversion. To see this, one may observe how it acts on group-like elements. Modeled after the classical setting, a non-zero even element $g \in H_0$ is called \emph{group-like} if $\Delta(g) = g \otimes g$ and $\varepsilon(g) = 1$. The set of group-like elements forms a unital semigroup under the product of $H$ because $\Delta $ is an algebra morphism preserving the unit. For any group-like element $g$, evaluating the defining diagram of the antipode immediately yields $g S(g) = 1_H = S(g) g$, which forces $g$ to be algebraically invertible with $S(g) = g^{-1}$. Since the inverse of a group-like element must remain group-like, we conclude that the group-like elements constitute a genuine group inside the group of units $H_0^{\times}$, within which $S$ provides the inversion.

A \emph{super bi-ideal} in a Hopf superalgebra $H$ is a super vector subspace $I$ that is a two-sided ideal of $H$ as a superalgebra and a two-sided coideal of $H$ as a supercoalgebra. A \emph{Hopf superideal} of $H$ is then a super bi-ideal that is stable under the action of the antipode, meaning that $S(I) \subseteq I$. If $I \subseteq H$ is a Hopf superideal, the quotient space $H/I$ naturally inherits the structure of a Hopf superalgebra with operations induced from $H$, explicitly given by $\bar{\Delta} (x + I) = \Delta(x) +  (I \otimes_k H + H \otimes_k I)$, $\bar{\varepsilon}(x + I) = \varepsilon(x)$ and $\bar{S}(x+I) = S(x) + I$. One can show that $\bar{S}$ is the only choice to turn $H/I$ into a Hopf superalgebra. Furthermore, if $\phi \colon H \to H'$ is a morphism of Hopf superalgebras, then the kernel of $\phi$ is a Hopf superideal of $H$. 

\subsection{Affine algebraic supergroups}\label{sec:2.4}
We now bring together the notions introduced above to define affine algebraic supergroups. Here we follow closely the treatment given in \S8.5 of \cite{Westra2009} and \S3 of \cite{MasuokaTakahashi2021}. Further details can be found in \cite{FioresiLledo2004,Fioresi2008,Zubkov2009,MasuokaZubkov2011,BovdiZubkov2023}. 

By an \emph{affine algebraic supergroup}, we mean a group object $G$ in the category of affine superschemes. Equivalently, it is a representable functor 
\begin{equation}
G \colon \sAlg_{k} \longrightarrow \Groups,
\end{equation}
where $\Groups$ denotes the category of groups. Thus, there exists a commutative superalgebra $k[G]$ such that, for every commutative superalgebra $A$,
\begin{equation}
G(A) = \Hom_{\sAlg_k}(k[G],A).
\end{equation}
Here, $k[G]$ is precisely the coordinate superalgebra of the affine superscheme representing $G$, and is required to be of finite type. In that case, $k[G]$ has a canonical structure of Hopf superalgebra where the coproduct $\Delta_G \colon k[G] \to k[G] \otimes_k k[G]$, counit $\varepsilon_G \colon k[G] \to k$ and antipode $S_G \colon k[G] \to k[G]$ are the unique morphisms dual to the structural morphisms defining the group structure on $G$. Conversely, every finitely generated commutative Hopf superalgebra arises in this way from a unique affine algebraic supergroup.

Let $G$ be an affine algebraic supergroup with representing Hopf superalgebra $k[G]$. Given a Hopf superideal $I \subseteq k[G]$, then, as we saw in the preceding subsection, the quotient $k[G]/I$ is again a Hopf superalgebra, and therefore represents an affine algebraic supergroup $H$. The quotient morphism $k[G]\to k[G]/I$ induces a morphism $H\to G$, identifying $H$ with a closed  supersubgroup of $G$. Conversely, every closed supersubgroup of $G$ arises in this way. Of particular importance is the underlying purely even algebraic  supergroup $G_{\uev}$, which is the closed supersubgroup corresponding to the  Hopf superideal $(k[G]_1)$ generated by the odd component $k[G]_1$ of $k[G]$. As a functor, $G_{\uev}(A)=G(A_0)$ for every commutative superalgebra $A$. The quotient $k[G]/(k[G]_1)$ is moreover a purely even Hopf superalgebra, thereby representing an ordinary affine algebraic group denoted by $G_0$, whose representable functor is given by the restriction of $G$ to the category of  commutative algebras.

We shall also need the following observation. If $G$ is an affine algebraic supergroup, the conjugation natural transformation $c \colon G \times G \to G$ induces, under duality, a morphism $c^{*} \colon k[G] \to k[G] \otimes_k k[G]$ of commutative superalgebras. A Hopf superideal $I \subseteq k[G]$ is said to be \emph{normal} if $c^{*}(I) \subseteq k[G] \otimes_k I$.  One can then show that the above correspondence restricts to a one-to-one correspondence between normal Hopf superideals of $k[G]$ and closed normal supersubgroups of $G$. 

Associated with any such closed normal supersubgroup $H$ of $G$, corresponding to the normal Hopf superideal $I_H \subseteq k[G]$, one defines the quotient functor $G/H$ by $(G/H)(A) = G(A)/H(A)$ for every commutative superalgebra $A$. It is a nontrivial result that $G/H$ is again an affine algebraic supergroup. Its representing Hopf superalgebra $k[G/H]$ is naturally identified with the subsuperalgebra of coinvariants of $k[G]$ with respect to the quotient morphism $r_H \colon k[G] \to k[G]/I_H = k[H]$. More explicitly, this is
\begin{equation}
k[G/H] = \{ a \in k[G] \mid ((\id_{k[G]} \otimes r_H) \circ \Delta_G)(a) = a \otimes 1 \}.
\end{equation}
This quotient construction, with all its ingredients, will be particularly useful later in the paper.

At this point, we highlight another way of describing affine algebraic supergroups that will be relevant for our discussion. By a \emph{super Harish--Chandra pair}, we mean a pair $(G_0,\gfrak)$ consisting of an affine algebraic group $G_0$ and a Lie superalgebra $\gfrak=\gfrak_0\oplus\gfrak_1$ with $\Lie G_0=\gfrak_0$, together with a representation of $G_0$ on $\gfrak$ that preserves the Lie bracket and whose differential gives the adjoint action of $\gfrak_0$ on $\gfrak$. This notion is closely tied to the structure of affine algebraic supergroups. Indeed, let $G$ be an affine algebraic supergroup and let $k[G]$ be its coordinate Hopf superalgebra. By a $\varepsilon_G$-\emph{superderivation} we mean a homogeneous linear map $D\colon k[G]\to k$ satisfying the graded Leibniz rule
\begin{equation}\label{eq:2.16}
D(ab) = D(a)\varepsilon_G(b)+(-1)^{\lvert D \rvert \lvert a \rvert}\varepsilon_G(a)D(b)
\end{equation}
for all homogenoeus elements $a, b \in k[G]$. The Lie superalgebra of $G$, denoted $\Lie G$, is by definition the superspace of all such $\varepsilon_G$-superderivations. For homogeneous $D,D'\in\Lie G$, the Lie bracket is given by 
\begin{equation}\label{eq:2.17}
[D,D'] = (D \otimes D') \circ \Delta_G - (-1)^{\lvert D \rvert \lvert D' \rvert} (D' \otimes D) \circ \Delta_G.
\end{equation}
This algebraic description of $\Lie G$ agrees with its interpretation as the space of ``infinitesimal elements'' of $G$ at the identity. Now, the group $G$ itself acts naturally on $\Lie G$. This action is induced by conjugation and gives a representation of $G_0$ on $\Lie G$ that preserves the Lie bracket, and its differential gives the adjoint action of $\Lie G_0$ on $\Lie G$. Thus, $G$ gives rise to the super Harish--Chandra pair $(G_0,\Lie G)$. Conversely, if one is given a super Harish--Chandra pair $(G_0, \gfrak)$, one can associate to it an affine algebraic supergroup $G$. The underlying super vector space of its coordinate superalgebra $k[G]$ is defined as $k[G]=k[G_{0}]\otimes _{k}\medwedge^{\sbullet} \gfrak_{1}^{*}$, where $\medwedge^{\sbullet} \gfrak_{1}^{*}$ is the exterior algebra over the dual of $\gfrak_{1}$. Its product and coproduct are uniquely prescribed by the Hopf algebra structure of $k[G_0]$, the wedge product of $\medwedge^{\sbullet} \gfrak_{1}^{*}$, and the adjoint action of $G_{0}$ on $\mathfrak{g}_{1}$.

Another notion we will need is that of a character. To define it, let $\GG_m$ denote the multiplicative supergroup represented by the Hopf superalgebra $k[\GG_m] = k[t,t^{-1}]$, where $t$ is even, with coproduct, counit and antipode given by $\Delta_{\GG_m}(t)=t\otimes t$, $\varepsilon_{\GG_m}(t)=1$ and $S_{\GG_m}(t)=t^{-1}$. Given an affine algebraic supergroup $G$, by a \emph{character} of $G$ we mean a morphism of Hopf superalgebras
\begin{equation}
\chi^{*}\colon k[\GG_m]\longrightarrow k[G].
\end{equation}
Notice that, since $k[\GG_m]$ is generated by the invertible even element $t$, specifying such a morphism is equivalent to specifying an invertible even element $\chi=\chi^{*}(t)\in k[G]_0$. Compatibility with the Hopf superalgebra structure then implies that $\Delta_G(\chi)=\chi\otimes\chi$ and $\varepsilon_G(\chi)=1$. This means that $\chi$ is a group-like element of $k[G]$. Conversely, every group-like element of $k[G]$ determines a unique character of $G$. Thus, characters of $G$ are in one-to-one correspondence with the group-like elements of $k[G]$. 

There is one last notion that will also play a role in what follows. By a \emph{one-parameter subgroup} of an affine algebraic supergroup $G$ we mean a morphism of Hopf superalgebras
\begin{equation}
\lambda^{*}\colon k[G]\longrightarrow k[\GG_m].
\end{equation}
One should note that, since $k[\GG_m]$ is purely even, every such morphism necessarily vanishes on the odd component $k[G]_1$ of $k[G]$. It therefore annihilates the Hopf superideal $(k[G]_1)$, and hence factors uniquely through the quotient Hopf superalgebra $k[G_0] = k[G]/(k[G]_1)$. Conversely, every one-parameter subgroup $\lambda_0\colon\GG_m\to G_0$ in the ordinary sense determines a one-parameter subgroup of $G$. Indeed, the corresponding Hopf algebra morphism $\lambda_0^{*}\colon k[G_0]\to k[\GG_m]$ induces, via the quotient morphism $k[G]\to k[G_0]$, a unique Hopf superalgebra morphism $\lambda^{*}\colon k[G]\to k[\GG_m]$. Thus, one-parameter subgroups of $G$ are in one-to-one correspondence with ordinary one-parameter subgroups of $G_0$.

\subsection{Two fundamental examples}\label{sec:2.5}
We conclude this section with a couple of primary examples illustrating the notions introduced above. These will serve as our basic test cases throughout the paper. Full details may be found in the references cited in the preceding subsections.

First, we consider the affine superspace $\AA^{m \vert n}$ of dimension $m \vert n$, which provides the simplest example of an affine superscheme. It is defined to be the functor
\begin{equation}
\AA^{m \vert n} \colon \sAlg_k \longrightarrow \Sets
\end{equation}
that assigns to each commutative superalgebra $A$ the even part of the free $A$-supermodule $A^{m \vert n}$. This functor is represented by the commutative superalgebra
\begin{equation}
k[\AA^{m \vert n}] = k[x_1, \dots, x_m, \theta_1, \dots, \theta_n],
\end{equation}
where the $x_i$ are even variables and the $\theta_j$ are odd variables. In fact, in view of the definition, $\AA^{m \vert n}(A) = A_0^m \oplus A_1^n$, so that an element of $\AA^{m \vert n}(A)$ consists of $m$ even and $n$ odd elements of $A$. Such a choice uniquely determines a morphism of commutative superalgebras  $k[\AA^{m \vert n}]  \to A$ by sending each $x_i$ to an even element and each $\theta_j$ to an odd element of $A$. We also remark here that, since $k[\AA^{m \vert n}]$ is manifestly of finite type, $\AA^{m \vert n}$ is indeed an algebraic affine superscheme.

Secondly, we consider an example of an affine algebraic supergroup, namely the odd multiplicative supergroup $\GG_m^{1\vert n}$. To define it, let $A$ be a commutative superalgebra and consider the group $A_0^\times$ of invertible elements of the even part $A_0$. We can then endow the product $A_0^\times \times A_1^n$ with a group structure by means of the rule
\begin{equation}
(a, b_1, \dots, b_n) \cdot (a', b'_1, \dots, b'_n) = (aa', ab'_1 + a'b_1, \dots, ab'_n + a'b_n),
\end{equation}
for every pair of elements $(a, b_1, \dots, b_n), (a', b'_1, \dots, b'_n) \in A_0^\times \times A_1^n$. The group so obtained is denoted $\GG_m^{1\vert n}(A)$. This thereby yields a functor
\begin{equation}
\GG_m^{1\vert n} \colon \sAlg_k \longrightarrow \Groups,
\end{equation}
which sends each commutative superalgebra $A$ to $\GG_m^{1\vert n}(A)$. Such a functor is represented by the commutative superalgebra
\begin{equation}
k[\GG_m^{1\vert n}] = k[t,t^{-1},\xi_1,\dots,\xi_n],
\end{equation}
where $t$ is an even invertible variable and the $\xi_i$ are odd variables. The Hopf superalgebra structure on $k[\GG_m^{1\vert n}]$ is induced by the multiplication and inversion in $\GG_m^{1 \vert n}$. On the generators, the coproduct and counit take the form
\begin{equation}
\begin{gathered}
\Delta_{\GG_m^{1\vert n}}(t) = t \otimes t, \quad \Delta_{\GG_m^{1\vert n}}(\xi_i) = t \otimes \xi_i + \xi_i \otimes t,\\
\varepsilon_{\GG_m^{1\vert n}}(t) = 1, \quad \varepsilon_{\GG_m^{1\vert n}}(\xi_i) =0.
\end{gathered}
\end{equation}
As for the antipode, it is given by
\begin{equation}
S_{\GG_m^{1\vert n}}(t)=t^{-1}, \quad S_{\GG_m^{1\vert n}}(\xi_i) = -t^{-2} \xi_i.
\end{equation}
It should be apparent from the definition that the underlying affine algebraic group of $\GG_m^{1\vert n}$ is naturally identified with $\GG_m$. It should also be noted that the Lie superalgebra of $\GG_m^{1\vert n}$ has an even part generated by an even element $H$ and an odd part generated by odd elements $Q_1,\dots,Q_n$, with Lie brackets given by
\begin{equation}
[H,Q_i] = Q_i, \quad [H,H] = 0, \quad  [Q_i,Q_j] = 0.
\end{equation}
In terms of the even and odd variables of $k[\GG_m^{1\vert n}]$, these generators are represented by the vector fields
\begin{equation}
H = t \frac{\partial}{\partial t}, \quad Q_i = t\frac{\partial}{\partial \xi_i}. 
\end{equation}
One may, too, consider characters of $\GG_m^{1\vert n}$ in the sense defined above. For instance, the simplest non-trivial character is $\chi_1^* \colon k[\GG_m] \to k[\GG_m^{1\vert n}]$, defined by $\chi_1^*(t)$. Upon composition with the quotient morphism $k[\GG_m^{1\vert n}] \to k[\GG_m]$, it yields the identity on $k[\GG_m]$, thereby inducing the identity character on the underlying algebraic group $\GG_m$. More generally, for each integer $m \in \ZZ$, one has the morphism $\chi_m^* \colon k[\GG_m] \to k[\GG_m^{1\vert{}n}]$, given by $\chi_m^*(t) = t^m$, which, when composed with the quotient morphism, likewise yields the character of weight $m$ on $\GG_m$. Consequently, none of these characters detect the odd variables of $k[\GG_m^{1\vert n}]$.


\section{Supergroup coactions and affine superquotients}\label{sec:3}
This section establishes the foundational algebraic framework required for our study of quotients in the super context. Given the affine nature of our objects, we formalize the geometric actions of affine algebraic supergroups primarily through their dual counterpart, namely supergroup coactions on coordinate superalgebras. This allows us to define the corresponding subsuperalgebras of invariants and construct the naive affine superquotients for affine superschemes.

\subsection{Supergroup coactions and invariants}\label{sec:3.1}
We begin by defining the notion of a supergroup coaction on a commutative superalgebra. Our immediate goal is to establish the precise conventions, terminology, and algebraic properties used in what follows.

Let $A$ be a commutative superalgebra and let $G$ be an affine algebraic supergroup represented by its Hopf superalgebra $k[G]$. By a \emph{coaction} of $k[G]$ on $A$, we mean a morphism of superalgebras $\rho \colon A \to A \otimes_k k[G]$ satisfying the coassociativity and counitality conditions, which state respectively that
\begin{equation}
\begin{gathered}
(\rho \otimes \id_{k[G]}) \circ \rho = (\id_A \otimes \Delta_G) \circ \rho, \\ 
(\id_A \otimes \varepsilon_G) \circ \rho = \id_A.
\end{gathered}
\end{equation}
The first condition ensures compatibility with the supergroup multiplication, whereas the second ensures that the identity element of the supergroup acts trivially. Within this framework, the appropriate notion of invariants for the affine supergroup action is defined entirely in terms of the underlying Hopf structure:
\begin{equation}\label{eq:3.2}
A^{G}=\{ a \in A \mid \rho(a) = a \otimes 1 \}.
\end{equation}
This subset $A^G$ constitutes a subsuperalgebra of $A$ and contains precisely the elements that remain unaltered under the coaction of $k[G]$. We shall refer to it as the \emph{superalgebra of invariants}.

It is worth noting that a coaction $\rho \colon A \to A \otimes_k k[G]$ naturally induces a conventional group action on the commutative superalgebra $A$. Indeed, the set of superalgebra morphisms $\xi \colon k[G] \to k$, formally known as the $k$-\emph{points} of the supergroup $G$, forms a group under the convolution product. Each such point yields a superalgebra endomorphism of $A$ given by
\begin{equation}
\sigma_{\xi} = (\id_A \otimes \xi) \circ \rho,
\end{equation}
and the assignment sending each $\xi$ to $\sigma_{\xi}$ defines an action of this group of points on $A$. This point-wise description will be employed later.

We now isolate a preliminary fact about the superalgebra of invariants, which underlies much of what follows.

\begin{lemma}\label{lem:3.1}
Let $G$ be an affine algebraic supergroup and $G_0$ its underlying affine algebraic group. For any coaction $\rho \colon A \to A \otimes_k k[G]$ on a commutative superalgebra $A$, one has $A^{G} \subseteq A^{G_0}$. 
\end{lemma}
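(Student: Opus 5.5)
First we make precise what $A^{G_0}$ means. Let $\pi\colon k[G]\to k[G_0]=k[G]/(k[G]_1)$ be the quotient morphism of Hopf superalgebras from \S\ref{sec:2.4}. The coaction $\rho$ induces a coaction of $k[G_0]$ on $A$, namely
\begin{equation*}
\rho_0=(\id_A\otimes \pi)\circ\rho\colon A\longrightarrow A\otimes_k k[G_0].
\end{equation*}
By \eqref{eq:3.2}, $A^{G_0}$ is then the set of $a\in A$ with $\rho_0(a)=a\otimes 1$.

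Next we check that $\rho_0$ really is a coaction. Both $\rho$ and $\id_A\otimes\pi$ are superalgebra morphisms, so $\rho_0$ is one too. Because $\pi$ is a morphism of Hopf superalgebras, we have $\Delta_{G_0}\circ\pi=(\pi\otimes\pi)\circ\Delta_G$ and $\varepsilon_{G_0}\circ\pi=\varepsilon_G$. These two identities give coassociativity and counitality of $\rho_0$ directly from the corresponding axioms for $\rho$. This is a routine diagram chase.

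The inclusion now follows at once. Take $a\in A^G$, so $\rho(a)=a\otimes 1$. Applying $\id_A\otimes\pi$ and using that $\pi$ is unital, $\pi(1)=1$, we get
\begin{equation*}
\rho_0(a)=a\otimes\pi(1)=a\otimes 1,
\end{equation*}
so $a\in A^{G_0}$.

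The only real subtlety is the interpretive step, namely agreeing that the $G_0$-action on $A$ is the restriction along the closed embedding $G_0\hookrightarrow G$. Once that is fixed, the argument is a single line.

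Alternatively, one could restrict along the purely even subgroup $G_{\uev}$, whose ideal is also $(k[G]_1)$. This is the same quotient, so the argument is identical.

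If instead one uses the pointwise description, note that every $k$-point $\xi\colon k[G]\to k$ kills $k[G]_1$, since $k$ is purely even. Hence $\xi$ factors through $\pi$, and for $a\in A^G$ we get $\sigma_\xi(a)=a\,\xi(1)=a$. So $a$ is fixed by all $k$-points, which is consistent with the Hopf-theoretic statement.
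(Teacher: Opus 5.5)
Your proof is correct and follows the same route as the paper: define the reduced coaction $\rho_0=(\id_A\otimes r_0)\circ\rho$ via the quotient $k[G]\to k[G_0]$, then for $a\in A^G$ compute $\rho_0(a)=a\otimes r_0(1)=a\otimes 1$. Your extra check that $\rho_0$ satisfies the coaction axioms, and your remarks on $G_{\uev}$ and on $k$-points, are harmless additions that the paper leaves implicit.
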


\begin{proof}
Let $r_0 \colon k[G] \to k[G_0]$ be the canonical restriction morphism to the underlying algebraic affine group, corresponding to the projection onto the reduced part. The coaction of $G_0$ on $A$, denoted by $\rho_0$, is defined as the composition $\rho_0 = (\id_A \otimes r_0) \circ \rho$. If $a \in A^{G}$, then by definition $\rho(a) = a \otimes 1$. Applying the restriction map, we obtain
$$
\rho_0(a) = (\id_A \otimes r_0)(a \otimes 1) = a \otimes r_0(1) = a \otimes 1,
$$
which implies that $a \in A^{G_0}$, and consequently $A^{G} \subseteq A^{G_0}$, as desired.
\end{proof}

One might ask whether the reverse inclusion in Lemma~\ref{lem:3.1} also holds. The following example shows that this is not the case.

\begin{example}\label{ex:3.2}
Let us consider the odd multiplicative supergroup $\GG_{m}^{1 \vert 1}$. As detailed in \S\ref{sec:2.5}, it is represented by the Hopf superalgebra $k[\GG_m^{1\vert 1}] = k[t, t^{-1}, \xi]$, where $t$ is an even variable and $\xi$ is an odd variable, equipped with the structural coproducts $\Delta_{\GG_{m}^{1 \vert 1}}(t) = t \otimes t$ and $\Delta_{\GG_{m}^{1 \vert 1}}(\xi) = \xi \otimes t + t \otimes \xi$, counits $\varepsilon_{\GG_{m}^{1 \vert 1}}(t)=1$ and $\varepsilon_{\GG_{m}^{1 \vert 1}}(\xi) = 0$, and antipodes $S_{\GG_{m}^{1 \vert 1}}(t) = t^{-1}$ and $S_{\GG_{m}^{1 \vert 1}}(\xi) = - t^{-2} \xi$. We examine a coaction of $k[\mathbb G_m^{1\mid 1}]$ on the commutative superalgebra $k[x,y,\theta]$, which, as we know, is geometrically the coordinate superalgebra of the affine superspace $\mathbb A^{2\mid 1}$. On the generators, this coaction $\rho \colon k[x,y,\theta] \to k[x,y,\theta] \otimes_k k[t, t^{-1}, \xi]$ is prescribed by
$$
\begin{aligned}
\rho (x)&=x\otimes t,\\ 
\rho (y)&=y\otimes t^{-1},\\ 
\rho (\theta )&=\theta \otimes t+x\otimes \xi.
\end{aligned}
$$
Let us determine the full superalgebra of invariants under $\rho$ by solving the defining condition in \eqref{eq:3.2}. A generic homogeneous element of $k[x,y,\theta]$ can be written as $a = f(x,y) + g(x,y)\theta$, where $f$ and $g$ are polynomials in the even generators. Applying the full coaction $\rho$ to this element and expanding it yields
$$
\rho(a) = \rho(f(x,y)) + \rho(g(x,y) \theta). 
$$
Let us consider the first summand, $\rho(f(x,y))$. Each monomial $x^{i} y^{j}$ occurring in $f(x,y)$ contributes to the latter a term $x^{i} y^{j} \otimes t^{i-j}$. Hence, for such a term to occur in an invariant, we must have $i=j$. This means that the even part of the superalgebra of invariants is generated by the standard classical invariant $u = xy$. We now consider the second summand, $\rho(g(x,y) \theta)$. For a monomial $x^{i} y^{j} \theta$ occurring in $g(x,y) \theta$, the coaction gives
$$
\rho(x^{i} y^{j} \theta) = x^{i} y^{j} \theta \otimes t^{i-j+1} + x^{i+1} y^{j}  \otimes  t^{i-j} \xi.
$$
The term $x^{i} y^{j} \theta \otimes t^{i-j+1}$ can occur in an invariant only when $i- j + 1 =0$. The term $x^{i+1} y^{j}  \otimes  t^{i-j} \xi$, however, contains the odd variable $\xi$. Since no term involving $\xi$ occurs in $\rho(f(x,y))$, such terms cannot be cancelled by any contribution coming from $f(x,y)$. Moreover, distinct monomials $x^{i} y^{j}$ occurring in $g(x,y)$ give distinct monomials $x^{i+1} y^{j}$, so these terms cannot cancel among themselves either. Consequently, the invariance condition forces $g(x,y) = 0$. Putting all this together, the full superalgebra of invariants is
$$
k[x,y,\theta]^{\GG_m^{1 \vert 1}} = k[u].
$$
We now examine what happens when the coaction $\rho$ is projected onto the classical reduced subgroup  $(\GG_m^{1 \vert 1})_0 = \GG_m$ via the restriction map $r_0 \colon k[t,t^{-1},\xi] \to k[t,t^{-1}]$ that annihilates the odd variable $\xi$. As in the proof of Lemma~\ref{lem:3.1}, the resulting coaction is $\rho_0 = (\id_{k[x,y,\theta]} \otimes r_0) \circ \rho$ and, from the definition, is given on the generators by
$$
\begin{aligned}
\rho_0(x) &= x \otimes t, \\
\rho_0(y) &= y \otimes t^{-1}, \\
\rho_0(\theta) &= \theta \otimes t .
\end{aligned}
$$
With this reduced coaction in hand, the preceding calculation can be simplified considerably. The even part is generated by the same classical invariant $u = xy$, or exactly the same reason as before. The odd part, however, behaves differently. Under the reduced coaction, a monomial of the odd sector $x^{i} y^{j} \theta$ is mapped to
$$
\rho_0(x^{i}y^{j} \theta) = x^{i} y^{j} \theta \otimes t^{i - j +1}.
$$
Hence, just as before, invariance requires $i-j+1 = 0$. The difference is that the additional term involving $\xi$ is now absent, so this condition can actually be satisfied. The minimal solution in nonnegative integers is $i=0$ and $j=1$, so that the odd part is generated over $k[u]$ by the odd generator $\eta = y\theta$. We note that $\eta^2 = (y\theta)^2 = y^2\theta^2 = 0$, since $\theta$ is an odd variable. Furthermore, there are no additional relations between $u$ and $\eta$ beyond $\eta^2=0$. We conclude that the superalgebra of invariants is
$$
k[x,y,\theta]^{\GG_m} = k[u,\eta].
$$
From this, we see that $k[x,y,\theta]^{\GG_m}$ is strictly larger than $k[x,y,\theta]^{\GG_m^{1 \vert 1}}$, and hence is not contained in $k[x,y,\theta]^{\GG_m^{1 \vert 1}}$. Thus, the reverse inclusion in Lemma~\ref{lem:3.1} does not hold in general.
\end{example}

Since the inclusion in Lemma~\ref{lem:3.1} cannot in general be promoted to an equality, we instead ask whether equality can be expected for the even parts. This is indeed the case in Example~\ref{ex:3.2}, and we now investigate this phenomenon in general. So let $G$ be an affine algebraic supergroup with underlying affine algebraic group $G_0$. Write $\gfrak = \Lie G$ for the Lie superalgebra of $G$, and recall from \S\ref{sec:2.4} that $G$ can equivalently be described by the super Harish--Chandra pair $(G_0,\gfrak)$. If, as usual, $\gfrak_1$ denotes the odd part of $\mathfrak{g}$, an action of $\mathfrak{g}_{1}$ on a commutative superalgebra $A$ by odd superderivations is a linear map $\delta \colon \mathfrak{g}_1 \otimes_k A \to A$ satisfying
\begin{equation}\label{eq:3.4}
\delta(D \otimes ab) = \delta(D \otimes a) b + (-1)^{\lvert a \rvert} a\delta(D \otimes b)
\end{equation}
for every $D \in \mathfrak{g}_1$ and all homogeneous elements $a, b \in A$. The next result shows how the pair $(G_0,\gfrak)$, together with the notion just introduced, provides a convenient characterization of coactions of $k[G]$ on commutative superalgebras.

\begin{lemma}\label{lem:3.3}
Let $A$ be a commutative superalgebra. Then giving a coaction $\rho \colon A \to A \otimes_k k[G]$ is equivalent to giving a coaction $\rho_0 \colon A \to A \otimes_k k[G_0]$ together with an action by odd superderivations $\delta \colon \gfrak_1 \otimes_k A \to A$ satisfying the following conditions: \emph{(i)}  $\delta$ is compatible with $\rho _{0}$ via the relation
\begin{equation}
\rho_0 \circ \delta = (\delta \otimes \id_{k[G_0]}) \circ (\id_{\gfrak_1} \otimes m_{G_0}) \circ (\id_{\gfrak_1} \otimes \tau \otimes \id_{k[G_0]}) \circ (\ad^{*} \otimes \rho_0),
\end{equation}
where $\tau \colon k[G_0] \otimes_k A \to A \otimes_k k[G_0]$ is the canonical super-flip map, $m_{G_0} \colon k[G_0] \otimes_k k[G_0] \to k[G_0]$ is the product map, and $\mathrm{ad}^* \colon \gfrak_1 \to \gfrak_1 \otimes_k k[G_0]$ is the coaction induced by the adjoint representation of $G_{0}$ on $\gfrak_{1}$; \emph{(ii)} for every $D, D' \in \gfrak_1$ and $a \in A$,
\begin{equation}\label{eq:3.6}
\delta (D \otimes \delta(D' \otimes a)) + \delta (D' \otimes \delta(D \otimes a)) = (\id_{A} \otimes [D,D'])(\rho_0 (a)).
\end{equation}
\end{lemma}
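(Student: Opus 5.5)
We will use the Harish--Chandra description of $k[G]$ from \S\ref{sec:2.4}. Choose a basis $D_1,\dots,D_n$ of $\gfrak_1$ with dual basis $\xi_1,\dots,\xi_n$ of $\gfrak_1^*$. Then there is an isomorphism of left $k[G_0]$-supermodules and superalgebras
\[
k[G]\cong k[G_0]\otimes_k \medwedge^{\sbullet}\gfrak_1^{*}.
\]
Its coproduct on the $\xi_i$ has the form $\Delta_G(\xi_i)=\sum_j \xi_j\otimes a_{ji}+1\otimes \xi_i+(\text{higher odd terms})$. Here $(a_{ji})$ is the matrix coefficient of $\ad^*$, and the terms of higher odd degree involve the bracket $\gfrak_1\otimes\gfrak_1\to\gfrak_0$. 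It is cleanest to phrase things through the equivalence of categories between $G$-supermodules (equivalently $k[G]$-comodules) and $(G_0,\gfrak)$-modules, i.e.\ $G_0$-modules with a compatible $\gfrak$-action. This equivalence is standard (Masuoka, Carmeli--Fioresi) and should be the backbone of the argument. A coaction on the superalgebra $A$ is then a comodule structure for which $\rho$ is multiplicative, and the plan is to check that multiplicativity corresponds exactly to $\rho_0$ being a superalgebra map and $\delta$ satisfying \eqref{eq:3.4}.

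\textbf{Forward direction.} Given $\rho$, set $\rho_0=(\id_A\otimes r_0)\circ\rho$ as in the proof of Lemma~\ref{lem:3.1}, and define
\[
\delta(D\otimes a)=(\id_A\otimes D)(\rho(a)),
\]
viewing $D\in\gfrak_1$ as an odd $\varepsilon_G$-superderivation $k[G]\to k$. The odd Leibniz rule \eqref{eq:3.4} follows from multiplicativity of $\rho$ together with \eqref{eq:2.16}; the sign $(-1)^{\lvert a\rvert}$ comes from moving $D$ past the $A$-factor. Condition (i) follows from coassociativity: apply $(\id\otimes D\otimes r_0)$ to $(\rho\otimes\id)\rho=(\id\otimes\Delta_G)\rho$, and identify $(D\otimes r_0)\circ\Delta_G$ with the twisted composite involving $\ad^*$ and $m_{G_0}$. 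Condition (ii) comes from $(\id\otimes D\otimes D')\circ(\id\otimes\Delta_G)\circ\rho$. Its symmetrization, by \eqref{eq:2.17} (where the odd--odd bracket is a graded sum rather than a difference), equals $(\id\otimes[D,D'])\rho$. Since $[D,D']\in\gfrak_0=\Lie G_0$ factors through $r_0$, this is $(\id\otimes[D,D'])\rho_0$.

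\textbf{Converse, and the main obstacle.} Given $(\rho_0,\delta)$, we must reconstruct $\rho$. Write $\delta_i=\delta(D_i\otimes -)$ and, for an ordered subset $I=\{i_1<\dots<i_r\}$, set $\delta_I=\delta_{i_1}\cdots\delta_{i_r}$ and $\xi_I=\xi_{i_1}\cdots\xi_{i_r}$. The candidate formula is
\[
\rho(a)=\sum_I \pm(\rho_0\circ\delta_I)(a)\,(1\otimes\xi_I),
\]
an exponential $\rho=\rho_0\cdot\exp(\sum\delta_i\otimes\xi_i)$ with the standard ordering convention used to build $k[G]$ from the Harish--Chandra pair. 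The main obstacle is verifying coassociativity of this $\rho$ against the explicit, messy coproduct of $k[G]$. Rather than computing directly, I would invoke the Harish--Chandra equivalence on modules. Condition (i) is precisely $G_0$-equivariance of the $\gfrak_1$-action, and condition (ii) ensures the $\gfrak_1$-action together with the differential of $\rho_0$ defines a $\gfrak$-module structure, with the odd--odd brackets landing in $\gfrak_0$. So $A$ becomes a $(G_0,\gfrak)$-module, hence a $k[G]$-comodule via a unique $\rho$ restricting to $\rho_0$ and $\delta$. Counitality is automatic from $\varepsilon_G(\xi_I)=0$ for $I\neq\emptyset$. For multiplicativity, note that $\rho(ab)$ and $\rho(a)\rho(b)$ are both comodule maps $A\otimes A\to A\otimes k[G]$ (using the diagonal $(G_0,\gfrak)$-structure on $A\otimes A$). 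They agree after reduction to $G_0$ and after applying each $D_i$, by \eqref{eq:3.4}. Uniqueness in the equivalence then forces equality.

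Finally, the two constructions are mutually inverse. Starting from $\rho$ we recover $\rho$, because a comodule structure is determined by its associated $(G_0,\gfrak)$-data. Starting from $(\rho_0,\delta)$, the reconstructed $\rho$ reduces to $\rho_0$ modulo the odd generators, and its $\xi_i$-linear coefficient is $\delta_i$, recovering $\delta$.
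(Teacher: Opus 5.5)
Your forward direction is the paper's. You set $\rho_0=(\id_A\otimes r_0)\circ\rho$ and $\delta(D\otimes a)=(\id_A\otimes D)(\rho(a))$, get \eqref{eq:3.4} from multiplicativity and \eqref{eq:2.16}, get (i) from coassociativity together with the adjoint coaction, and get (ii) from \eqref{eq:2.17} using that $[D,D']\in\gfrak_0$ factors through $r_0$.

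The converse is where you take a genuinely different route. The paper works inside $k[G]=k[G_0]\otimes_k\medwedge^{\sbullet}\gfrak_1^{*}$. It \emph{defines} $\rho$ by the nested formula $(\id_A\otimes D_1\cdots D_n)\rho(a)=\pm\rho_0(\delta(D_n\otimes\cdots\delta(D_1\otimes a)\cdots))$, which is essentially your exponential formula. It then asserts that (i), (ii), an induction on degree and coassociativity of $k[G_0]$ give a coassociative superalgebra map, but the checks against the coproduct of $k[G]$ are only sketched.

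You write the same formula but never verify it. Instead you import the Harish--Chandra isomorphism between $k[G]$-comodules and $(G_0,\gfrak)$-modules (Masuoka; Carmeli--Caston--Fioresi). You read (i) as $G_0$-equivariance of $\delta\colon\gfrak_1\otimes_k A\to A$ and (ii) as the odd--odd bracket relation. This gives existence, uniqueness, counitality, coassociativity and mutual inverseness rigorously in one stroke. The price is a substantial black box: it needs at least $\operatorname{char}k\neq 2$, which the paper tacitly assumes too, and it must be used in its monoidal form. The paper's route is more explicit, but carrying it out fully amounts to reproving part of that theorem.

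One step needs rephrasing. You say that $a\otimes b\mapsto\rho(ab)$ is a comodule map $A\otimes_k A\to A\otimes_k k[G]$. As written, that is equivalent to the multiplicativity you are trying to prove, so it cannot be an input. Moreover, two comodule maps into the cofree comodule $A\otimes_k k[G]$ (with coaction $\id_A\otimes\Delta_G$) already coincide once they agree after $\id_A\otimes\varepsilon_G$. So the appeal to agreement after $r_0$ and after each $D_i$ does no work in that formulation.

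The correct argument is to show that the multiplication $m_A\colon A\otimes_k A\to A$ is a morphism of $(G_0,\gfrak)$-modules, using the diagonal structure on $A\otimes_k A$.
\begin{itemize}
\item It is $G_0$-equivariant because $\rho_0$ is an algebra map.
\item Its $\gfrak_1$-equivariance is exactly \eqref{eq:3.4}.
\end{itemize}
The Harish--Chandra functor is fully faithful and monoidal, so $m_A$ is then a morphism of $k[G]$-comodules, which is precisely $\rho(ab)=\rho(a)\rho(b)$. Likewise, $\rho(1)=1\otimes 1$ follows from $\rho_0(1)=1\otimes 1$ and $\delta(D\otimes 1)=0$, the latter forced by \eqref{eq:3.4}.
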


\begin{proof}
Suppose we are given a coaction $\rho \colon A \to A \otimes_k k[G]$. Then, the coaction $\rho_0 \colon A \to A \otimes_k k[G_0]$ is simply the one defined by the restriction morphism $r_0 \colon k[G] \to k[G_0]$ in the proof of Lemma~\ref{lem:3.1}. To construct the action $\delta \colon \gfrak_1 \otimes_k A \to A$, recall that elements $D \in \gfrak_1$ act as $\varepsilon _{G}$-superderivations on $k[G]$. We define the linear map $\delta \colon \gfrak_1 \otimes_k A \to A$ by evaluating the second factor of the coaction, that is,
$$
\delta (D\otimes a)=(\id_{A}\otimes D)(\rho (a))
$$
for all $D \in \gfrak_1$ and $a \in A$. Since $\rho$ is a morphism of superalgebras and $D$ satisfies the graded Leibniz rule~\eqref{eq:2.16}, it follows immediately that $\delta$ satisfies relation~\eqref{eq:3.4}. The compatibility relation (i) is a direct consequence of the coassociativity of $\rho$ combined with the fact that the action of $G_{0}$ on $\gfrak_{1}$ dually manifests as the adjoint coaction $\ad^*$. Condition (ii) follows from the algebraic definition of the Lie bracket~\eqref{eq:2.17}; for odd elements, this bracket relation translates precisely into equation~\eqref{eq:3.6} on $A$.

Conversely, suppose we are provided with a $\rho_0 \colon A \to A \otimes_k k[G_0]$ and an action by odd superderivations $\delta \colon \gfrak_1 \otimes_k A \to A$ satisfying conditions (i) and (ii). Recall that, by the definition of the affine algebraic supergroup associated with the pair $(G_0, \gfrak)$, the underlying super vector space of $k[G]$ is given by $k[G_0] \otimes_k \medwedge^{\sbullet} \gfrak_1^*$. Using this decomposition, we define the linear map $\rho \colon A \to A \otimes_k k[G]$ by specifying how products of odd elements evaluate on the second tensor factor of the codomain. Explicitly, for every homogeneous element $a \in A$ and any elements $D_1, \dots, D_n \in \mathfrak{g}_1$, this evaluation is defined by the formula
$$
(\id_A \otimes D_1 \cdots D_n)\rho(a) = (-1)^{n\lvert a \rvert + \frac{n(n-1)}{2}} \rho_0 ( \delta(D_n \otimes \delta(D_{n-1} \otimes \cdots \otimes \delta(D_2 \otimes \delta(D_1 \otimes a)) \cdots )) ),
$$
where the sign is uniquely prescribed by the Koszul sign rule for shifting the odd operators $D_{i}$ past the homogeneous element $a$. Condition (i) ensures that this map $\rho$ is compatible with the coaction $\rho _{0}$, meaning that it is equivariant with respect to the action of the even subgroup $G_{0}$. Meanwhile, condition (ii) guarantees that the higher successive applications of $\delta$ are compatible with the relations of the exterior algebra $\medwedge^{\sbullet} \gfrak_1^*$, which ensures that $\rho$ is compatible with the Lie bracket structure given in~\eqref{eq:2.17}. Finally, because $\delta$ acts by superderivations, a straightforward inductive argument on the degree shows that $\rho$ preserves the product, making it a genuine morphism of superalgebras.  Coassociativity then follows from the combination of the coassociativity of the Hopf algebra $k[G_0]$ and the compatibility between $\delta$ and the adjoint coaction.
\end{proof}

Retaining the setup of the previous lemma, let $G$ be an affine algebraic supergroup with associated super Harish--Chandra pair $(G_0, \mathfrak{g})$. For a coaction $\rho \colon A \to A \otimes_k k[G]$ on a commutative superalgebra $A$, we shall ease the notation by writing $D \cdot a = \delta(D \otimes a)$ for all $D \in \mathfrak{g}_1$ and $a \in A$, where $\delta \colon \gfrak_1 \otimes_k A \to A$ is the corresponding action by odd superderivations. We then define the \emph{superalgebra of odd infinitesimal invariants} as
\begin{equation}\label{eq:3.7}
A^{\gfrak_1} = \{ a \in A \mid \text{$D \cdot a = 0$ for all $D \in \gfrak_1$}\}.
\end{equation}
It is a straightforward verification that $A^{\gfrak_{1}}$ is a subsuperalgebra of $A$. Crucially, this subsuperalgebra provides a refinement of the inclusion established in Lemma~\ref{lem:3.1} via the following result.

\begin{proposition}\label{prop:3.4}
Let $G$ be an affine algebraic supergroup and $(G_0, \mathfrak{g})$ its corresponding super Harish--Chandra pair. For any coaction $\rho \colon A \to A \otimes_k k[G]$ on a commutative superalgebra $A$, one has 
\begin{equation}
A^{G}  = A^{G_0} \cap A^{\gfrak_1}. 
\end{equation}
\end{proposition}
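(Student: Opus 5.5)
We prove the two inclusions separately, using Lemma~\ref{lem:3.1} and the description of coactions in Lemma~\ref{lem:3.3}.

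\emph{Forward inclusion.} Let $a \in A^G$, so $\rho(a) = a \otimes 1$. Lemma~\ref{lem:3.1} already gives $a \in A^{G_0}$. For $D \in \gfrak_1$, the proof of Lemma~\ref{lem:3.3} gives
\[
D\cdot a = (\id_A \otimes D)(\rho(a)) = a\, D(1).
\]
The Leibniz rule~\eqref{eq:2.16} with $a=b=1$ gives $D(1) = 2D(1)$, hence $D(1)=0$. Therefore $D\cdot a = 0$ and $a \in A^{\gfrak_1}$.

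\emph{Reverse inclusion.} Let $a \in A^{G_0}\cap A^{\gfrak_1}$; we may assume $a$ homogeneous, since all three subsuperalgebras are graded. We use the vector space identification $k[G] \cong k[G_0]\otimes_k \medwedge^{\sbullet}\gfrak_1^*$ from the Harish--Chandra construction. Through it, $\rho(a)$ is determined by the family of evaluations $(\id_A \otimes D_1\cdots D_n)\rho(a)$, viewed as elements of $A\otimes_k k[G_0]$, for $n \ge 0$ and $D_i \in \gfrak_1$.

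By the reconstruction formula in Lemma~\ref{lem:3.3}, each such evaluation equals, up to sign,
\[
\rho_0\bigl(D_n\cdot(D_{n-1}\cdots(D_1\cdot a)\cdots)\bigr).
\]
For $n\ge 1$ the innermost term $D_1\cdot a$ vanishes, so all these components vanish. For $n=0$ the component is $\rho_0(a) = a\otimes 1$. Hence $\rho(a)$ has the same components as $a\otimes 1$, and so $\rho(a)=a\otimes 1$.

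\emph{Main obstacle.} The delicate point is making this identification precise. We must check that an element of $A\otimes_k k[G]$ is uniquely determined by its "$\rho_0$-part" together with all iterated odd-derivative components. Concretely, we must verify that the pairing of $\medwedge^{\sbullet}\gfrak_1^*$ with products $D_1\cdots D_n$ (distribution-style, via $\Delta_G$) separates elements, fibrewise over $k[G_0]$. We must also check that the reconstruction of Lemma~\ref{lem:3.3} is inverse to the restriction map $\rho\mapsto(\rho_0,\delta)$, not merely a construction of some coaction.

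A fallback is to argue by uniqueness instead. Both $\rho$ and the map $A\to A\otimes_k k[G]$ sending $a\mapsto a\otimes 1$ restrict, on the subsuperalgebra $A^{G_0}\cap A^{\gfrak_1}$, to the same pair $(\rho_0,\delta)=(\,\cdot\otimes1,\,0)$. The bijectivity in Lemma~\ref{lem:3.3}, applied to this subsuperalgebra, would then force the two maps to agree there. This requires first checking that the subsuperalgebra is stable under $\rho$. Alternatively, one can run an induction on the odd filtration of $k[G]$ by powers of the ideal $(k[G]_1)$: show at each step that the component of $\rho(a)-a\otimes1$ in filtration degree $n$ is detected by some $n$-fold application of odd derivations, and therefore vanishes.
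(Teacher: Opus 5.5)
Your proposal is correct and follows essentially the same route as the paper. For the forward inclusion you use Lemma~\ref{lem:3.1} together with $D(1)=0$. For the reverse inclusion you use the decomposition $k[G]\cong k[G_0]\otimes_k\medwedge^{\sbullet}\gfrak_1^*$ and the nested formula from Lemma~\ref{lem:3.3}, whose $n\ge 1$ components vanish because the innermost term $D_1\cdot a$ is zero. The point you flag as the main obstacle is that $\rho(a)$ is determined by its $n=0$ component together with its iterated odd-derivative components. The paper takes exactly this for granted through that reconstruction formula, so your write-up is, if anything, more explicit about what still needs justification.
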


\begin{proof}
We first show that $A^{G}$ is contained in the intersection $A^{G_{0}} \cap A^{\gfrak_{1}}$. Let $a \in A^G$. By Lemma~\ref{lem:3.1}, it is already known that $a \in A^{G_0}$. On the other hand, since $a \in A^G$ means that $\rho(a) = a \otimes 1$, we can evaluate the action of any element $D \in \gfrak_1$ on $a$ using the construction given in the proof of Lemma~\ref{lem:3.3}, that is,
$$
D\cdot a=(\id_{A}\otimes D)(\rho(a)) =(\id_{A}\otimes D)(a\otimes 1)=a\otimes D (1).
$$
Since $D$ acts as an $\varepsilon _{G}$-superderivation, it annihilates the unit element, meaning $D(1) = 0$. Consequently, $D \cdot a = 0$ for all $D \in \gfrak_1$, which proves that $a \in A^{\gfrak_1}$. This completes the proof of the forward inclusion.

We next show that the intersection $A^{G_0} \cap A^{\gfrak_1}$ is contained in $A^{G}$. Let $a \in A^{G_0} \cap A^{\gfrak_1}$. Since $a$ belongs to $A^{G_{0}}$, we have $\rho_0(a) = a \otimes 1$. Furthermore, since $a$ is an element of $A^{\mathfrak{g}_{1}}$, we have by definition that $D \cdot a = 0$ for every element $D \in \gfrak_1$. To show that $a \in A^G$, we must verify that $\rho(a) = a \otimes 1$. Under the decomposition $k[G] = k[G_0] \otimes_k \medwedge^{\sbullet} \gfrak_1^*$, we evaluate the components of $\rho(a)$ by checking its contraction against any collection of odd elements $D_1, \dots, D_n \in \gfrak_1$. If $n = 0$, the evaluation reduces to the component on $k[G_0]$, which is given by $\rho_0(a) = a \otimes 1$. For any higher component where $n \geq 1$, the explicit formula from the proof of Lemma~\ref{lem:3.3} dictates that the evaluation is given by the nested expression
$$
(\id_A \otimes D_1 \cdots D_n)\rho(a) = (-1)^{n\lvert a \rvert + \frac{n(n-1)}{2}} \rho_0 ( D_0 \cdot ( D_{n-1} \cdot ( \cdots \cdot ( D_2 \cdot (D_1 \cdot a)) \cdots )))
$$
Since the innermost term is $D_1 \cdot a = 0$, and the action is linear, the entire nested expression collapses to zero. Because all higher projections onto the positive degrees of the exterior algebra vanish identically, the map $\rho(a)$ is completely concentrated in degree zero, yielding precisely $\rho(a) = \rho_0(a) = a \otimes 1$. This establishes that $a$ belongs to $A^{G}$, which concludes the proof.
\end{proof}

As a direct consequence of this proposition, we obtain the following description of the even component of the invariant superalgebra.

\begin{corollary}\label{cor:3.5}
With the notation and hypotheses as above, one has
\begin{equation}
(A^G)_0 = A_0^{G_0} \cap A_0^{\gfrak_1}.
\end{equation}
In particular, we obtain $(A^G)_0 = A_0^{G_0}$ if and only if $\gfrak_{1}$ acts trivially on $A_0^{G_0}$. 
\end{corollary}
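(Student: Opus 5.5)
The plan is to take even parts on both sides of the identity $A^{G} = A^{G_0} \cap A^{\gfrak_1}$ from Proposition~\ref{prop:3.4}. Each of $A^G$, $A^{G_0}$ and $A^{\gfrak_1}$ is a subsuperalgebra of $A$, so each is spanned by its homogeneous components, and intersection commutes with taking degree-zero components. This gives
\begin{equation*}
(A^G)_0 = (A^{G_0})_0 \cap (A^{\gfrak_1})_0.
\end{equation*}
I will then check the two identifications $(A^{G_0})_0 = A_0^{G_0}$ and $(A^{\gfrak_1})_0 = A_0^{\gfrak_1}$. For this I interpret $A_0^{G_0}$ as the even elements $a$ with $\rho_0(a) = a\otimes 1$, and $A_0^{\gfrak_1}$ as the even elements annihilated by every $D\in\gfrak_1$.

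The only point needing care is that the subspaces $A^{G_0}$ and $A^{\gfrak_1}$ are graded. For $A^{G_0}$, I use that $\rho_0$ is even, so for $a = a_0 + a_1$ the equation $\rho_0(a) = a\otimes 1$ splits by parity into $\rho_0(a_i) = a_i\otimes 1$. For $A^{\gfrak_1}$, I use that each $D\in\gfrak_1$ acts as an odd map sending $A_i$ to $A_{i+1}$. Hence $D\cdot a = 0$ forces $D\cdot a_0 = 0$ and $D\cdot a_1 = 0$ separately. Both facts are implicit in the paper's assertion that these are subsuperalgebras.

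For the ``in particular'' statement, $(A^G)_0 = A_0^{G_0}\cap A_0^{\gfrak_1}$ is always contained in $A_0^{G_0}$. Equality with $A_0^{G_0}$ holds iff $A_0^{G_0}\subseteq A_0^{\gfrak_1}$, that is, iff $D\cdot a = 0$ for all $D\in\gfrak_1$ and all $a\in A_0^{G_0}$. This is precisely the statement that $\gfrak_1$ acts trivially on $A_0^{G_0}$.

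I expect no real obstacle here; the homogeneity check in the second paragraph is the only step that is not purely formal.
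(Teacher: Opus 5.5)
Your proposal is correct and follows the paper's argument: take even components of the identity in Proposition~\ref{prop:3.4}, identify $(A^{G_0})_0$ with $A_0^{G_0}$ using that $\rho_0$ is even and $(A^{\gfrak_1})_0$ with $A_0^{\gfrak_1}$ by definition, and read off the equivalence from the inclusion $A_0^{G_0}\subseteq A_0^{\gfrak_1}$. Your homogeneity check is harmless but not actually needed, since $(U\cap V)\cap A_0=(U\cap A_0)\cap(V\cap A_0)$ holds for arbitrary subspaces $U,V\subseteq A$.
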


\begin{proof}
By restricting the identity established in Proposition~\ref{prop:3.4} to the even degree components of the underlying super vector spaces, we obtain $(A^G)_0 = (A^{G_0})_0 \cap (A^{\mathfrak{g}_1})_0$. Since the coaction $\rho_0$ of the classical algebraic group $G_0$ preserves the $\ZZ_2$-grading, the even component of the superalgebra of invariants under $G_0$ coincides with $A_{0}^{G_{0}}$. Combining this with the fact that $A_{0}^{\gfrak_{1}}$ is by definition the even component of the superalgebra of odd infinitesimal invariants yields the desired intersection. The final criterion follows directly from the fact that the intersection equality $(A^G)_0 = A_0^{G_0}$ holds if and only if the inclusion $A_0^{G_0} \subseteq A_0^{\gfrak_1}$ is satisfied, which is precisely the requirement that every element in $A_{0}^{G_{0}}$ is annihilated by the operators of $\gfrak_{1}$, meaning that $\gfrak_{1}$ acts trivially on $A_{0}^{G_{0}}$.
\end{proof}

In view of the structural importance that the triviality condition in Corollary~\ref{cor:3.5} will play in what follows, we introduce a formal terminology. Given the superalgebra $A$ and the supergroup $G$ with its corresponding super Harish--Chandra pair $(G_0,\gfrak)$ as above, we shall say that a coaction $\rho \colon A \to A \otimes_k k[G]$ is \emph{odd-infinitesimally decoupled} if $\gfrak_{1}$ acts trivially on $A_{0}^{G_{0}}$. Geometrically, the idea is that the odd superderivations in $\gfrak_{1}$ fail to ``see'' the classical invariant subsuperalgebra $A_{0}^{G_{0}}$.

To establish our next result, which serves as a superalgebraic analogue of the classical Hilbert--Nagata theorem, it is necessary to introduce yet one more piece of terminology. We shall say that an affine algebraic supergroup $G$ is \emph{reductive} if its underlying affine algebraic group $G_0$ is reductive in the usual sense. In particular, this means that the notion of reductivity imposes no additional constraints on the odd directions.

\begin{proposition}\label{prop:3.6}
Let $A$ be a commutative superalgebra of finite type, and let $\rho \colon A \to A \otimes_k k[G]$ be an odd-infinitesimally decoupled coaction of a reductive affine algebraic supergroup $G$. Then the superalgebra of invariants $A^G$ is also of finite type.
\end{proposition}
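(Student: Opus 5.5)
We reduce the statement to the classical Hilbert--Nagata theorem for $G_0$ acting on the finitely generated algebra $A_0$ and on the finitely generated $A_0$-module $A_1$. By Corollary~\ref{cor:3.5} and the odd-infinitesimal decoupling hypothesis, $(A^G)_0 = A_0^{G_0}$. The coaction $\rho_0$ preserves the $\ZZ_2$-grading, so it restricts to a rational $G_0$-action on the finitely generated commutative algebra $A_0$. Since $G_0$ is reductive, the classical Hilbert--Nagata theorem gives that $A_0^{G_0}$ is a finitely generated $k$-algebra. This settles the even part of the finite-type condition.

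For the odd part, Proposition~\ref{prop:3.4} gives $(A^G)_1 = A_1^{G_0} \cap A_1^{\gfrak_1}$. We need to show that this is a finitely generated module over $(A^G)_0 = A_0^{G_0}$. First, $A_1$ is a finitely generated $A_0$-module with a compatible rational $G_0$-action. By the module version of Hilbert--Nagata, which we obtain by applying the theorem to the $G_0$-algebra $A_0 \oplus A_1$ with $A_1$ declared square-zero (or via the Reynolds operator), $A_1^{G_0}$ is a finitely generated $A_0^{G_0}$-module. Next, $(A^G)_1$ is an $A_0^{G_0}$-submodule of $A_1^{G_0}$. This holds because $A_0^{G_0} \subseteq A_0^{\gfrak_1}$ by decoupling, and each $D\in\gfrak_1$ acts as a superderivation, so $D\cdot(fa) = (D\cdot f)a + f(D\cdot a) = 0$ for $f\in A_0^{G_0}$ and $a\in A_1^{\gfrak_1}$.

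Finally, $A_0^{G_0}$ is a finitely generated $k$-algebra and hence Noetherian. A submodule of a finitely generated module over it is therefore finitely generated, so $(A^G)_1$ is finitely generated over $(A^G)_0$. Together with the first step, this shows that $A^G$ is of finite type.

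The main obstacle I expect is the second step, namely establishing the module version of Hilbert--Nagata cleanly in this setting. This requires checking that $\rho_0$ makes $A_1$ a rational $G_0$-module compatible with the $A_0$-action, so that the trivial-extension trick applies. An alternative is to argue directly with the Reynolds operator: the invariants of the ideal $A_0A_1$ are generated by the images of a finite set of $G_0$-stable generators of $A_1$. Either route relies only on reductivity of $G_0$. The remaining steps are formal consequences of Proposition~\ref{prop:3.4}, Corollary~\ref{cor:3.5}, and Noetherianity.
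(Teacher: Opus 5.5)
Your proposal is correct and follows essentially the same route as the paper. The even part comes from Hilbert--Nagata together with decoupling, and the odd part from finite generation of $A_1^{G_0}$ over the Noetherian ring $A_0^{G_0}$ combined with the inclusion of the $A_0^{G_0}$-submodule $(A^G)_1 \subseteq A_1^{G_0}$. Your square-zero extension trick is a slightly more robust justification of the module step than the paper's Reynolds operator, because it rests only on Nagata's finite generation theorem and so does not require linear reductivity of $G_0$.
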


\begin{proof}
We first examine the even part $(A^G)_0$. Since $A$ is of finite type, its even part $A_{0}$ is a finitely generated algebra, and is therefore a Noetherian ring. By the classical Hilbert--Nagata theorem, the algebra of invariants $A_{0}^{G_{0}}$ is a finitely generated algebra. Furthermore, because the coaction $\rho$ is odd-infinitesimally decoupled, Corollary~\ref{cor:3.5} guarantees that the even component of the superalgebra of invariants collapses precisely to this classical invariant algebra, meaning that $(A^G)_0 = A_0^{G_0}$. Thus, the even part $(A^G)_0$ is a finitely generated algebra.

We next consider the odd part $(A^{G})_1$. Since $A$ is of finite type, the odd part $A_1$ is a finitely generated $A_0$-module. The reductivity of the underlying algebraic group $G_0$ implies the existence of an exact Reynolds operator $R \colon A \to A^{G_0}$ which preserves the $\ZZ_{2}$-grading, restricting to a surjective morphism of $A_{0}^{G_{0}}$-modules from $A_1$ onto $(A^{G_0})_1$. By a classical theorem of Noether for modules under reductive group actions, it follows that $(A^{G_0})_1$ is a finitely generated $A_{0}^{G_{0}}$-module. Since $A_{0}^{G_{0}}$ is Noetherian, $(A^{G_0})_1$ is a Noetherian $A_{0}^{G_{0}}$-module. Finally, observing that by Lemma~\ref{lem:3.1} the odd component $(A^G)_1$ constitutes an $A_{0}^{G_{0}}$-submodule of the Noetherian module $(A^{G_0})_1$, we conclude that $(A^G)_1$ is also finitely generated over $A_0^{G_0} = (A^G)_0$. This shows that $A^{G}$ is of finite type, completing the proof.
\end{proof}

We close with a technical observation that will also play an important role in our study.

\begin{lemma}\label{lem:3.7}
Let $A$ be a commutative superalgebra equipped with a coaction $\rho \colon A \to A \otimes_k k[G]$ of an affine algebraic supergroup $G$, and let $S \subseteq (A^{G})_0$ be a multiplicatively closed subset. Then, there exists a natural isomorphism of commutative superalgebras
\begin{equation}
S^{-1} A^{G} \cong (S^{-1} A)^{G}.
\end{equation}
\end{lemma}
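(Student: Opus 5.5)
The plan is to first extend the coaction to $S^{-1}A$ and then compare invariants directly. Since every $s \in S$ satisfies $\rho(s) = s \otimes 1$, the element $\rho(s)$ is invertible in $S^{-1}A \otimes_k k[G]$, with inverse $s^{-1} \otimes 1$. By the universal property of localization, the composite $A \to A \otimes_k k[G] \to S^{-1}A \otimes_k k[G]$ therefore extends uniquely to a morphism of superalgebras
\[
\tilde\rho \colon S^{-1}A \to S^{-1}A \otimes_k k[G], \qquad \tilde\rho(a/s) = \rho(a)\,(s^{-1}\otimes 1).
\]
Because $s$ is even and central, no Koszul signs arise here. Coassociativity and counitality of $\tilde\rho$ follow from those of $\rho$, via the uniqueness part of the universal property: two superalgebra maps out of $S^{-1}A$ that agree on $A$ coincide.

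Next I will define the comparison map $\Phi \colon S^{-1}A^G \to (S^{-1}A)^G$ by $a/s \mapsto a/s$. It is well defined and lands in the invariants, since $\tilde\rho(a/s) = (a\otimes 1)(s^{-1}\otimes 1) = a/s \otimes 1$. It is also clearly a morphism of superalgebras.

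For injectivity, suppose $a/s = 0$ in $S^{-1}A$. Then $ua = 0$ for some $u \in S$. Since $u$ lies in $A^G$, the same relation shows $a/s = 0$ already in $S^{-1}A^G$.

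Surjectivity is the main step. Take $a/s \in (S^{-1}A)^G$. Invariance gives $\rho(a)(s^{-1}\otimes 1) = (a/s) \otimes 1$ in $S^{-1}A \otimes_k k[G]$, and hence $\rho(a) = a \otimes 1$ after passing to $S^{-1}A \otimes_k k[G]$. Since $S^{-1}A \otimes_k k[G] = S^{-1}(A \otimes_k k[G])$, where $S$ acts through the first factor and $k[G]$ is flat over $k$, this means that
\[
(u \otimes 1)\bigl(\rho(a) - a \otimes 1\bigr) = 0 \quad \text{in } A \otimes_k k[G]
\]
for some $u \in S$. Using $\rho(u) = u \otimes 1$ and the fact that $\rho$ is multiplicative, this says $\rho(ua) = ua \otimes 1$, so $ua \in A^G$. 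Consequently $a/s = ua/(us)$ with $ua \in A^G$ and $us \in S$, which lies in the image of $\Phi$.

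The delicate point I expect is precisely this identification $S^{-1}A \otimes_k k[G] \cong S^{-1}(A \otimes_k k[G])$ and the extraction of a single $u$ from the torsion relation. Both hold because localization commutes with $-\otimes_k k[G]$, tensor products over a field being exact. Naturality of the isomorphism in $(A,S)$ is then immediate from the construction.
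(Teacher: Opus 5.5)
Your argument is correct and is essentially the paper's proof. The paper writes $A^G=\ker\psi$ with $\psi(a)=\rho(a)-a\otimes 1$ and invokes exactness of localization together with $S^{-1}(A\otimes_k k[G])\cong S^{-1}A\otimes_k k[G]$; your injectivity and surjectivity checks are exactly that kernel-commutation unpacked at the level of elements. Your step $\rho(ua)=(u\otimes 1)\rho(a)$ makes explicit the $S$-linearity of $\psi$, which the paper's appeal to the localization functor needs but leaves unstated. Flatness of $k[G]$ is not needed for that identification, which is just associativity of $\otimes$.
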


\begin{proof}
By definition, the superalgebra of invariants $A^{G}$ is the kernel of the linear map $\psi \colon A \to A \otimes_k k[G]$ defined by 
$$
\psi(a) = \rho(a) - a \otimes 1.
$$
Since the multiplicatively closed subset $S$ consists of even invariants, one has $\rho(s) = s \otimes 1$ for every $s \in S$, which ensures that the coaction uniquely extends to a well-defined coaction $\rho_S \colon S^{-1}A \to S^{-1}A \otimes_k k[G]$ on the localized superalgebra. The superalgebra of invariants under this extended action, $(S^{-1}A)^G$, is precisely the kernel of the linear map $\psi_S \colon S^{-1}A \to S^{-1}A \otimes_k k[G]$ given by
$$
 \psi_S\left( \frac{a}{s} \right) = \rho_S\left( \frac{a}{s} \right)  - \frac{a}{s} \otimes 1.
 $$
 To establish the required isomorphism, we apply the localization functor $S^{-1}$ directly to the map $\psi$. Since this functor is exact on the category of supermodules, it commutes with the taking of kernels, yielding a natural isomorphism of super vector spaces
$$
S^{-1} A^{G} = S^{-1} \ker \psi \cong \ker S^{-1} \psi. 
$$
Furthermore, because localization commutes with tensor products, we can canonically identify the codomain of the localized map $S^{-1}\psi$ so that it maps from $S^{-1}A$ into $S^{-1}A \otimes_k k[G]$ by sending any fraction $a/s$ to the element $(s \otimes 1)^{-1}\rho(a) - (a/s) \otimes 1$. By the construction of the extended coaction, this image matches the definition of $\psi_S(a/s)$ identically, which implies that 
$$
\ker S^{-1}\psi = \ker \psi_S = (S^{-1}A)^G.
$$
Bringing it all together, this chain of identifications yields the desired natural isomorphism $S^{-1} A^{G} \cong (S^{-1}A)^G$, which completes the proof.
\end{proof}

\subsection{Affine superquotients of affine superschemes}\label{sec:3.2}
We now turn to the definition and study of affine superquotients. Building on the developments of the preceding subsection, we proceed by mimicking the conventional prescription employed in ordinary algebraic geometry.

Let $X = \sSpec A$ be an affine superscheme, and let $G$ be an affine algebraic supergroup acting on $X$ via an odd-infinitesimally decoupled coaction $\rho \colon A \to A \otimes_k k[G]$. The affine superscheme $X {\sslash} G = \sSpec A^{G}$ is called the \emph{affine superquotient} of $X$ by $G$.

Explicitly, the structure of $X {\sslash} G$ follows directly from the general definition of the spectrum of a commutative superalgebra. First, since the coaction is odd-infinitesimally decoupled, Corollary~\ref{cor:3.5} ensures that $(A^{G})_0 = A_0^{G_0}$, so that the underlying topological space $\lvert \sSpec A^{G} \rvert$ is given by $\Spec A_0^{G_0}$. This shows that the topology of the superquotient is completely determined by the action of the reduced subgroup $G_{0}$ on the underlying classical scheme $\Spec A_0$. The structure sheaf of $X {\sslash} G$ is given, for each basic open subset $D(f) \subseteq \lvert \sSpec A^{G} \rvert$ with $f \in A_0^{G_0}$, by
\begin{equation}
\Gamma(D(f), \Ocal_{X {\sslash} G}) = (A^{G})_f.
\end{equation}
Since $f$ is an invariant element belonging to the even part, Lemma~\ref{lem:3.7} yields a natural isomorphism $(A^{G})_f \cong (A_{f})^{G}$. Consequently, the sections of the structure sheaf over basic open subsets can be interpreted as locally defined invariant functions on $X$. For every prime ideal $\pfrak \in \lvert \sSpec A^{G} \rvert$, the stalk of the sheaf is given by the localization
\begin{equation}
\Ocal_{X {\sslash} G, \pfrak} = (A^{G})_{\pfrak},
\end{equation}
which is a local commutative superalgebra. Once again, by Lemma~\ref{lem:3.7}, this stalk can be identified with $(A_{\pfrak})^{G}$, indicating that the local structure of the superquotient is entirely determined by invariant functions.

To make further progress in characterizing the geometric properties of the affine superquotient, we need to introduce some additional terminology. Let $X = \sSpec A$ be an affine superscheme equipped with an odd-infinitesimally decoupled coaction $\rho \colon A \to A \otimes_k k[G]$ of an affine algebraic supergroup $G$. A morphism $f \colon X \to Y$, where $Y = \sSpec B$ is another affine superscheme, is said to be $G$-\emph{invariant} if its dual morphism $f^* \colon B \to A$ satisfies $\rho(f^*(b)) = f^*(b) \otimes 1$ for all $b \in B$, which means that the image of $f^{*}$ is entirely contained within the superalgebra of invariants $A^G \subseteq A$. Similarly, a closed subset $Z \subseteq  X $ is defined to be $G$-\emph{invariant} if it is stable under the action induced by the underlying classical algebraic group $G_{0}$ on the topological base space $\lvert X \rvert$.

With all these elements in place, we now introduce the canonical map associated with the affine superquotient. So let $X = \sSpec A$ be an affine superscheme equipped with an odd-infinitesimally decoupled coaction $\rho \colon A \to A \otimes_k k[G]$ of an affine algebraic supergroup $G$. Consider the natural inclusion morphism of commutative superalgebras $A^G \hookrightarrow A$. By the functoriality of the spectrum construction, this algebraic inclusion induces a morphism of affine superschemes, which we denote by $\pi \colon X \to X {\sslash} G$, and which we shall call the \emph{affine superquotient morphism}. At the level of the underlying topological spaces, this morphism is explicitly given by
\begin{equation}
\pi(\pfrak) = \pfrak \cap (A^G)_0 = \pfrak \cap A_0^{G_0}
\end{equation}
for every $\pfrak \in \lvert X \rvert$. In particular, $\pi$ coincides with the classical morphism induced by the inclusion of invariants $A_0^{G_0} \hookrightarrow A_0$.

The expected properties of this construction come together in the following result.

\begin{theorem}
With the notation and assumptions established above, the following properties hold:
\begin{enumerate}
\item the morphism $\pi$ is $G$-invariant;

\item the morphism $\pi$ is affine;

\item if $Z \subseteq X$ is a $G$-invariant closed subset, then $\pi(Z)$ is closed in $X {\sslash} G$;

\item if $Z_1, Z_2 \subseteq X$ are disjoint, $G$-invariant closed subsets, then $\pi(Z_1) \cap \pi(Z_2) = \varnothing$;

\item for every open subset $U \subseteq X {\sslash} G$, there is a canonical isomorphism $\Ocal_{X {\sslash} G}(U) \cong \Ocal_X(\pi^{-1}(U))^G$.
\end{enumerate}
\end{theorem}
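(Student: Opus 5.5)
The strategy is to reduce everything topological to the classical affine quotient $\Spec A_0 \to \Spec A_0^{G_0}$, and to handle the sheaf-theoretic statement with Lemma~\ref{lem:3.7}. Since the coaction is odd-infinitesimally decoupled, Corollary~\ref{cor:3.5} gives $(A^G)_0 = A_0^{G_0}$. By the explicit formula above, $\lvert \pi \rvert$ is exactly the classical map induced by $A_0^{G_0}\hookrightarrow A_0$. We also assume $G$ reductive, as the Hilbert--Mumford setting of the paper requires; this is needed for (3) and (4).

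For (1), the dual morphism $\pi^*$ is the inclusion $A^G\hookrightarrow A$, so $\rho(\pi^*(b))=b\otimes 1$ holds by the definition \eqref{eq:3.2}. For (2), it suffices to treat basic opens $D(f)$ with $f\in A_0^{G_0}$. The preimage $\pi^{-1}(D(f))$ consists of primes $\pfrak$ with $f\notin \pfrak\cap A_0^{G_0}$, which is exactly $D(f)\subseteq \lvert X\rvert$ with $f$ regarded as an even element of $A$. This set is affine in the sense of \S\ref{sec:2.2}. A general affine open of $X{\sslash}G$ is of the form $D(f)$ by the definition used in the paper, so (2) follows.

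For (3) and (4), a $G$-invariant closed subset is by definition a $G_0$-stable closed subset of $\Spec A_0$. The statements are then purely about the classical good quotient $\Spec A_0\to \Spec A_0^{G_0}$ for reductive $G_0$. We would quote the classical argument: write $Z_i=V(I_i)$ with $I_i\subseteq A_0$ a $G_0$-stable radical ideal, and use the Reynolds operator $R\colon A_0\to A_0^{G_0}$. Applying $R$ to $1=a_1+a_2$ with $a_i\in I_i$ gives $1=R(a_1)+R(a_2)$ with $R(a_i)\in I_i^{G_0}$, so $V(I_1^{G_0})\cap V(I_2^{G_0})=\varnothing$. Closedness follows from $\pi(V(I))=V(I^{G_0})$. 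This identity uses surjectivity of $A_0/I\to\cdots$ on invariants, namely $(A_0/I)^{G_0}=A_0^{G_0}/I^{G_0}$, and the fact that the classical quotient map is surjective. That surjectivity comes from the lying-over property, via $JA_0\cap A_0^{G_0}=J$, again proved with the Reynolds operator. This is the main point where the proof relies on reductivity rather than formal manipulation, and we would simply invoke the classical results rather than redo them.

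For (5), we first treat $U=D(f)$ with $f\in A_0^{G_0}=(A^G)_0$. Then $\Ocal_{X{\sslash}G}(D(f))=(A^G)_f$, and $\pi^{-1}(D(f))=D(f)$ with $\Ocal_X(D(f))=A_f$. Lemma~\ref{lem:3.7}, applied with $S=\{f^n\}$, gives a natural isomorphism $(A^G)_f\cong (A_f)^G$, where $(A_f)^G$ is invariants for the extended coaction. We check that these isomorphisms are compatible with restriction maps $D(g)\subseteq D(f)$, which holds because they are induced by the inclusions $A^G\hookrightarrow A$. For arbitrary $U$, we cover it by basic opens $D(f_i)$ and their overlaps $D(f_if_j)$. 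Both sides are sheaves: the left side by construction, and the right side because $U\mapsto\Ocal_X(\pi^{-1}U)^G$ is the kernel of the map of sheaves $\psi=\rho-(\,\cdot\,)\otimes 1$. Since kernels commute with the equalizer diagrams, the sheaf property passes to invariants. A natural isomorphism on a basis therefore extends uniquely to all opens. The care needed is in defining $\Ocal_X(V)^G$ for non-affine $V$; we define it through this kernel description on the basis.
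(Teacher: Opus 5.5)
Your proposal is correct and follows the same route as the paper: (1) is immediate because $\pi^*$ is the inclusion, (2)--(4) reduce to the classical map $\Spec A_0\to\Spec A_0^{G_0}$ via $(A^G)_0=A_0^{G_0}$, and (5) is checked on basic opens $D(f)$ using Lemma~\ref{lem:3.7}; your extension from the basis to arbitrary opens spells out a step the paper leaves implicit. Your explicit addition of reductivity for (3)--(4) is warranted, since the paper's appeal to ``the classical setting'' tacitly relies on it and without it (4) already fails for $\GG_a$ acting on $\AA^2$ by $(x,y)\mapsto(x,y+tx)$; in positive characteristic, however, your Reynolds-operator sketch must be replaced by an appeal to geometric reductivity.
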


\begin{proof}
Properties (2), (3), and (4) follow formally from the classical setting since, at the level of the underlying topological spaces, the morphism $\pi$ coincides with the classical map $\Spec A_0 \to \Spec A_0^{G_0}$. In particular, the behavior of affine open subsets and $G$-invariant closed subsets is identical to that of the classical categorical quotient.

To prove (1), recall that $\pi$ is induced by the algebra inclusion $A^G \hookrightarrow A$; that is, the dual map $\pi^* \colon A^G \to A$ is precisely the inclusion mapping. Let $a \in A^G$. By the definition of invariant elements, we have $\rho(a) = a \otimes 1$. Since $\pi^*(a) = a$, it follows that $\rho(\pi^*(a)) = \pi^*(a) \otimes 1$, which establishes the $G$-invariance of $\pi$.

To prove (5), it suffices to verify the isomorphism on a base of affine open subsets. Consider a basic open set $U = D(f)$ with $f \in (A^G)_0 = A_0^{G_0}$. By definition, we have $\Ocal_{X {\sslash} G}(D(f)) = (A^G)_f$ and $\Ocal_X(\pi^{-1}(D(f))) = \Ocal_X(D(f)) = A_f$. Invoking Lemma~\ref{lem:3.7}, we obtain the natural identification $(A^G)_f \cong (A_f)^G$. This allows us to conclude that $\Ocal_{X {\sslash} G}(D(f)) \cong \Ocal_X(\pi^{-1}(D(f)))^G$, completing the proof.
\end{proof}

A final aspect regarding the behavior of the superquotient under additional finiteness and reductivity assumptions is recorded in the next result.

\begin{proposition}\label{prop:3.9}
Let $X = \sSpec A$ be an affine algebraic superscheme, and let $G$ be a reductive affine algebraic supergroup acting on $X$ via an odd-infinitesimally decoupled coaction $\rho \colon A \to A \otimes_k k[G]$. Then the affine superquotient $X {\sslash} G = \sSpec A^G$ is an affine algebraic superscheme.
\end{proposition}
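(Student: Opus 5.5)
The statement is essentially a repackaging of Proposition~\ref{prop:3.6} in geometric language, so the plan is to reduce it to that result and then unwind the definitions of \S\ref{sec:2.2}. By definition, $X {\sslash} G = \sSpec A^G$ is an affine superscheme. By the terminology fixed in \S\ref{sec:2.2}, it is \emph{algebraic} precisely when its coordinate superalgebra $k[X {\sslash} G] = A^G$ is of finite type. That is, we need $(A^G)_0$ to be a finitely generated $k$-algebra and $(A^G)_1$ to be a finitely generated $(A^G)_0$-module. So the whole task is to check that $A^G$ is of finite type.

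First I note the hypotheses. Since $X$ is an affine algebraic superscheme, $A = k[X]$ is a commutative superalgebra of finite type. The supergroup $G$ is reductive, meaning that $G_0$ is reductive. The coaction $\rho$ is odd-infinitesimally decoupled. These are exactly the hypotheses of Proposition~\ref{prop:3.6}, which I would invoke directly to conclude that $A^G$ is of finite type.

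For completeness I would also recall the two ingredients behind that proposition.
\begin{enumerate}
\item Corollary~\ref{cor:3.5}, together with the decoupling assumption, gives $(A^G)_0 = A_0^{G_0}$. This is finitely generated by the classical Hilbert--Nagata theorem.
\item $(A^G)_1$ is an $A_0^{G_0}$-submodule of $(A^{G_0})_1$. The latter is finitely generated by the Reynolds-operator argument, and hence Noetherian over $A_0^{G_0}$.
\end{enumerate}

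Finally, I would remark on the underlying space. Because $(A^G)_0 = A_0^{G_0}$ is a finitely generated algebra, $\lvert X {\sslash} G\rvert = \Spec A_0^{G_0}$ is a classical affine scheme of finite type. This matches the Noetherian framework described in \S\ref{sec:2.2}, so $X {\sslash} G$ is algebraic in the intended geometric sense.

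There is no real obstacle here; the only substantive point is the odd part. Its finiteness depends on the Noetherian property of $A_0^{G_0}$ and on $(A^G)_1$ being a submodule of the finitely generated module $(A^{G_0})_1$. Both of these are already established in the proof of Proposition~\ref{prop:3.6}, so the argument is a short verification.
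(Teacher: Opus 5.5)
Your proposal is correct and matches the paper's proof, which likewise observes that the statement is an immediate consequence of Proposition~\ref{prop:3.6}, since $X {\sslash} G$ being algebraic means exactly that $A^G$ is of finite type. Your recap of the two ingredients (Corollary~\ref{cor:3.5} with Hilbert--Nagata for the even part, and the Noetherian-submodule argument for the odd part) is accurate, though citing that proposition already suffices.
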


\begin{proof}
This is an immediate consequence of Proposition~\ref{prop:3.6}.
\end{proof}

The structural result established in the preceding proposition lends itself to a deep geometric interpretation. Under this setup, the classical quotient morphism $\Spec A_0 \to \Spec A_0^{G_0}$ parameterizes the closed orbits of the $G_{0}$-action on $\Spec  A_0$. Since the underlying topological space of the resulting algebraic superscheme $X {\sslash} G$ is precisely this classical spectrum, it follows that the underlying points of this superquotient correspond exactly to the classical closed $G_{0}$-orbits in $\Spec  A_0$.

This characterization indicates that the point-set topology of the quotient remains blind to the odd component of the supergroup. Because the points of $X {\sslash} G$ are entirely dictated by the classical action of $G_{0}$, they fail to detect the additional supergeometric data. Furthermore, the algebraic identity $(A^G)_0=A_0^{G_0}$ from Corollary~\ref{cor:3.5} ensures that no novel invariant functions originate from the odd sectors of the supergroup. Consequently, moving to the supergeometric context does not refine the classification of orbits; the superquotient distinguishes no more orbits than its classical counterpart.

The genuine supergeometric content is instead relegated entirely to the structure sheaf of $X {\sslash} G$. Indeed, because the coordinate superalgebra $A^{G}$ is a non-trivially graded superalgebra, its odd component defines nilpotent directions over the topological base $\Spec A_0^{G_0}$. The algebraic superscheme $X {\sslash} G$ may therefore be interpreted as a ``supergeometric thickening'' of the classical geometric quotient, wherein the additional odd information manifests exclusively at the infinitesimal level through the structure sheaf.

We conclude this discussion with a few illustrative examples.

\begin{example}\label{ex:3.10}
Let us examine an action of the odd multiplicative supergroup $\GG_m^{1 \vert p}$, defined by its coordinate Hopf superalgebra $k[\GG_m^{1 \vert p}] = k[t,t^{-1},\xi_1,\dots,\xi_p]$, on the affine superspace $\AA^{2 \vert p+q}$, whose coordinate superalgebra we write as $k[\AA^{2 \vert p+q}] = k[x,y,\eta_1,\dots,\eta_p,\theta_1,\dots,\theta_q]$.  This action is given by the coaction $\rho \colon k[\AA^{2 \vert p+q}] \to k[\AA^{2 \vert p+q}] \otimes_k k[\GG_{m}^{1 \vert p}]$ specified on generators by
$$
\begin{aligned}
\rho(x) &= x \otimes t, \\
\rho(y) &= y \otimes t, \\
\rho(\eta_i) & = \eta_i \otimes t + x \otimes \xi_i, \\
\rho(\theta_j) &= \theta_j \otimes t.
\end{aligned}
$$
To determine the full superalgebra of invariants under $\rho$, we solve the defining condition $\rho(a) = a \otimes 1$. Proceeding as in Example~\ref{ex:3.2}, a generic homogeneous element of $k[\AA^{2 \vert p+q}]$ is expressed as a linear combination of monomials of the form $x^{i} y^{j} \prod_{\mu=1}^{r}\eta_{i_{\mu}} \prod_{\nu=1}^{s} \theta_{j_{\nu}}$, where $i, j \ge 0$, $1 \le i_1 < \dots < i_r \le p$, and $1 \le j_1 < \dots < j_s \le q$. Evaluating the coaction $\rho$ on each factor of such a monomial yields
$$
\begin{aligned}
\rho(x^{i}) &= x^{i} \otimes t^{i}, \\
\rho(y^{j}) &= y^{j} \otimes t^{j},  \\
 \rho \left( \prod_{\mu=1}^{r} \eta_{i_{\mu}} \right) &=  \prod_{\mu=1}^{r} \eta_{i_{\mu}}  \otimes t^{r}  + \sum_{k=1}^{r} x^{k} \sum_{1 \leq \mu_1 < \cdots < \mu_k \leq r}(-1)^{\sum_{l=1}^{k} (r - k - \mu_l  +l)} \prod_{\substack{\mu=1 \\ \mu \not\in \{ \mu_1,\dots,\mu_k \}}}^{r} \eta_{i_{\mu}}   \otimes t^{r-k}\prod_{l=1}^{k}  \xi_{i_{\mu_{l}}}, \\
\rho \left( \prod_{\nu=1}^{s}\theta_{j_{\nu}}  \right) &=  \prod_{\nu=1}^{s}\theta_{j_{\nu}}   \otimes t^{s}.
\end{aligned}
$$
Multiplying these expressions, the coaction on a generic monomial reads
$$
\begin{aligned}
&x^{i} y^{j}  \prod_{\mu=1}^{r} \eta_{i_{\mu}} \prod_{\nu=1}^{s}\theta_{j_{\nu}}  \otimes t^{i + j + r + s} \\
 &\quad + \sum_{k = 1}^{r} x^{i + k} y^{j} \sum_{1 \leq \mu_1 < \cdots < \mu_k \leq r}(-1)^{\sum_{l=1}^{k} (r - k - \mu_l  +l)} \prod_{\substack{\mu=1 \\ \mu \not\in \{ \mu_1,\dots,\mu_k \}}}^{r} \eta_{i_{\mu}}   \prod_{\nu=1}^{s}\theta_{j_{\nu}}    \otimes t^{i + j+r+s-k} \prod_{l=1}^{k}  \xi_{i_{\mu_{l}}}.
\end{aligned}
$$
The terms in the sum involve non-trivial products of the odd variables $\xi_{i}$. Since the first term contains no such variables, these contributions cannot be canceled. Therefore, the invariance condition compels this sum to vanish, which requires $r = 0$. On the other hand, for the first term to appear in an invariant element, the exponent of $t$ must vanish, which forces $i + j + s = 0$. Since the indices are non-negative, this requires $i=j = s =0$. Hence, the full superalgebra of invariants is
$$
k[\AA^{2\vert p+q}]^{\GG_m^{1 \vert p}} = k.
$$
In line with our general setup, this calculation confirms that the coaction under consideration is odd-infinitesimally decoupled. It also follows at once that the affine superquotient contracts both topologically and structurally to an elemental geometric point:
$$
\AA^{2\vert p+q} {\sslash} \GG_m^{1 \vert p} = \sSpec k.
$$
This structural collapse can be fully understood by examining the underlying orbit geometry of the classical action. On the topological base space, the action reduces to the standard scaling action of $\GG_{m}$ on the classical affine plane $\AA^{2}$. Pictorially, the resulting classical orbits consist of the origin and the geometric rays emanating from it. Since the topological closure of every such ray inevitably contains the origin, these orbits cannot be separated, forcing the entire underlying space, and consequently the affine superquotient itself, to collapse down to a single elemental point.
\end{example}

\begin{example}\label{ex:3.11}
Let us consider the same odd multiplicative supergroup $\GG_{m}^{1 \vert p}$ acting on the same affine superspace $\AA^{2 \vert p+q}$, but now via a different coaction $\rho \colon k[\AA^{2 \vert p+q}] \to k[\AA^{2 \vert p+q}] \otimes_k k[\GG_{m}^{1 \vert p}]$ specified on the generators by
$$
\begin{aligned}
\rho(x) &= x \otimes t, \\
\rho(y) &= y \otimes t^{-1}, \\
\rho(\eta_i) & = \eta_i \otimes t + x \otimes \xi_i, \\
\rho(\theta_j) &= \theta_j \otimes t.
\end{aligned}
$$
The procedure and the resulting calculations are essentially identical to those in the previous example. Taking into account the factor $t^{-1}$ in the coaction on $y$, evaluating $\rho$ on a generic monomial $x^i y^j \prod_{\mu=1}^r \eta_{i_\mu} \prod_{\nu=1}^s \theta_{j_\nu}$ yields
$$
\begin{aligned}
&x^{i} y^{j}  \prod_{\mu=1}^{r} \eta_{i_{\mu}} \prod_{\nu=1}^{s}\theta_{j_{\nu}}  \otimes t^{i - j + r + s} \\
 &\quad + \sum_{k = 1}^{r} x^{i + k} y^{j} \sum_{1 \leq \mu_1 < \cdots < \mu_k \leq r}(-1)^{\sum_{l=1}^{k} (r - k - \mu_l  +l)} \prod_{\substack{\mu=1 \\ \mu \not\in \{ \mu_1,\dots,\mu_k \}}}^{r} \eta_{i_{\mu}}   \prod_{\nu=1}^{s}\theta_{j_{\nu}}    \otimes t^{i - j+r+s-k} \prod_{l=1}^{k}  \xi_{i_{\mu_{l}}}.
\end{aligned}
$$
As before, the non-trivial presence of the odd variables $\xi_{i}$ in the sum forces this contribution to vanish, which requires $r = 0$. On the other hand, for the first term to be invariant, the net exponent of $t$ must vanish identically, which dictates $i - j + s = 0$, or equivalently $i + s =j$. The monomials satisfying the invariance condition are therefore of the form
$$
x^{i} y^{i + s}\theta_{j_1} \cdots \theta_{j_s} = (xy)^{i} ( y \theta_{j_1}) \cdots ( y \theta_{j_s}). 
$$ 
It follows that the even part of the invariant superalgebra is generated entirely by the classical invariant $u = xy$, while its odd part is generated over $k[u]$ by the odd monomials $\zeta_j = y \theta_j$ for $j = 1, \dots, q$. Because each $\theta_j$ is an odd coordinate function, these generators satisfy the nilpotency relations $\zeta_j^2 = (y\theta_j)^2 = y^2\theta_j^2 = 0$. Since no further independent algebraic relations are imposed, the full superalgebra of invariants is
$$
k[\AA^{2\vert p+q}]^{\GG_m^{1 \vert p}} = k[u,\zeta_1,\dots,\zeta_q]. 
$$
This final algebraic identity confirms that the coaction under consideration is again odd-infinitesimally decoupled. It likewise establishes that the affine superquotient survives as a genuine affine algebraic superscheme, namely
$$
\AA^{2\vert p+q} {\sslash} \GG_m^{1 \vert p} = \sSpec  k[u,\zeta_1,\dots,\zeta_q] = \AA^{1 \vert q}. 
$$
Notice that this result provides a vivid illustration of the theoretical framework established above, explicitly demonstrating how the point-set topology remains blind to the odd variables of the supergroup. Under the reduced coaction, the action corresponds to the standard hyperbolic scaling of $\GG_{m}$ on the classical affine plane $\mathbb{A}^{2}$. Pictorially, the resulting orbits consist of the origin, the punctured coordinate axes, and the families of hyperbolas defined by $xy = \text{const}$. Since the topological closures of both punctured axes inevitably contain the origin, these specific orbits cannot be separated in the categorical framework and collapse into a single point. Conversely, the hyperbolas remain closed and topologically separated from one another, allowing the underlying topological space of the superquotient to behave as a well-behaved, separable classical affine line where the coordinate $u = xy$ cleanly parameterizes the closed orbits. The odd structure of the coaction leaves this classical topological base intact, while endowing the superquotient with the $q$ independent odd directions $\zeta_j = y\theta_j$ that survive in the invariant superalgebra.
\end{example}

\begin{example}\label{ex:3.12}
Let us consider once again that the same odd multiplicative supergroup $\GG_{m}^{1 \vert p}$ acts on the same affine superspace $\AA^{2 \vert p+q}$, but under a modified coaction $\rho \colon k[\AA^{2 \vert p+q}] \to k[\AA^{2 \vert p+q}] \otimes_k k[\GG_{m}^{1 \vert p}]$ designed to induce a fundamental structural imbalance between the even and odd variables. On the generators of the coordinate superalgebra, this action is prescribed by
$$
\begin{aligned}
\rho(x) &= x \otimes t, \\
\rho(y) &= y \otimes t, \\
\rho(\eta_i) & = \eta_i \otimes t^{-1} + x \otimes \xi_i, \\
\rho(\theta_j) &= \theta_j \otimes t^{-1}.
\end{aligned}
$$
As in the preceding two examples, the computation proceeds in exactly the same way, now accounting for the inverse dependence on $t$ in the coactions on the odd generators. Evaluating $\rho$ on a generic monomial $x^{i} y^{j} \prod_{\mu=1}^{r} \eta_{i_{\mu}} \prod_{\nu=1}^{s} \theta_{j_{\nu}}$ yields
$$
\begin{aligned}
&x^{i} y^{j}  \prod_{\mu=1}^{r} \eta_{i_{\mu}} \prod_{\nu=1}^{s}\theta_{j_{\nu}}  \otimes t^{i + j - r - s} \\
 &\quad + \sum_{k = 1}^{r} x^{i + k} y^{j} \sum_{1 \leq \mu_1 < \cdots < \mu_k \leq r}(-1)^{\sum_{l=1}^{k} (r - k - \mu_l  +l)} \prod_{\substack{\mu=1 \\ \mu \not\in \{ \mu_1,\dots,\mu_k \}}}^{r} \eta_{i_{\mu}}   \prod_{\nu=1}^{s}\theta_{j_{\nu}}    \otimes t^{i + j - r - s + k} \prod_{l=1}^{k}  \xi_{i_{\mu_{l}}}.
\end{aligned}
$$
Once again, the presence of the independent odd variables $\xi_{i}$ in the summation forces the second component to vanish, which immediately implies $r = 0$. For the remaining term to achieve invariance, the overall power of $t$ must equal zero, requiring $i + j - s = 0$, or equivalently $j = s - i$. Consequently, the invariant monomials are precisely those given by
$$
x^{i} y^{s - i}  \theta_{j_1} \cdots \theta_{j_s} = (x \theta_{j_1}) \cdots (x \theta_{j_i}) (y \theta_{j_{i+1}}) \cdots (y \theta_{j_1}). 
$$
This factorization reveals that the even part of the invariant superalgebra reduces entirely to the scalar field $k$, leaving the full superalgebra generated over $k$ by the odd monomials $\psi_j = x \theta_j$ and $\zeta_j = y \theta_j$ for $j = 1, \dots, q$. The superalgebra of invariants is therefore given by
$$
k[\AA^{2\vert p+q}]^{\GG_m^{1 \vert p}} = k[\psi_1,\dots,\psi_q,\zeta_1,\dots,\zeta_q]. 
$$
From this final algebraic identity, it is readily verified that the coaction under consideration is odd-infinitesimally decoupled. Moreover, unlike the situation in the preceding example, the resulting affine superquotient in this case collapses to a purely odd superspace, namely
$$
\AA^{2\vert p+q} {\sslash} \GG_m^{1 \vert p} = \sSpec k[\psi_1,\dots,\psi_q,\zeta_1,\dots,\zeta_q] = \AA^{0 \vert 2q}. 
$$
It is worth emphasizing the remarkable nature of this outcome: starting from the affine superspace $\AA^{2\vert p+q}$ with a non-trivial even core, the affine superquotient construction yields a purely odd superspace $\AA^{0 \vert 2q}$. This illustrates that algebraic coactions of supergroups can entirely annihilate the classical reduced space of the superquotient, reducing its underlying topological space to a single point while preserving a non-trivial odd structure.
\end{example}


\section{Character coactions and GIT superquotients}\label{sec:4}
This section contains the main results of our work, which center on extending the classic construction of GIT quotients from Section 2 of King's foundational paper \cite{King1994} to the supergeometric framework, thereby allowing us to define and construct GIT superquotients of affine superschemes. We first generalize the definition of relative invariants associated with a coaction of an affine algebraic supergroup on a commutative superalgebra and a choice of a character to the super-setting, emphasizing the key similarities and differences with the classical situation. Following this development, we adapt King's recipe to define the notions of semi-stability and stability in the context of affine algebraic supergroup actions on affine superschemes. We then characterize these stability conditions in terms of a numerical criterion in close analogy with King's paper, which ultimately allows us to construct the GIT superquotient of such superschemes.

\subsection{Relative invariants associated with a character}\label{sec:4.1}
We start by introducing the notion of a relative invariant associated with a character, within the context of a supergroup coaction on a commutative superalgebra. All notation and terminology established in \S~\ref{sec:3.1} remain in force.

Let $A$ be a commutative superalgebra equipped with a coaction $\rho \colon A \to A \otimes_k k[G]$ of an affine algebraic supergroup $G$. Fix a character $\chi^{*} \colon k[\GG_m] \to k[G]$, and let $\chi \in k[G]_0$ denote its associated group-like element. An element $a \in A$ is said to be a \emph{relative invariant of weight} $\chi$ if
\begin{equation}
\rho(a) = a \otimes \chi.
\end{equation}
The set of such relative invariants forms a super vector subspace of $A$, denoted by $A^{G,\chi }$ and explicitly defined as
\begin{equation}
A^{G,\chi} = \{ a \in A \mid  \rho(a) = a \otimes \chi \}. 
\end{equation}
This super vector subspace inherits the natural $\ZZ_{2}$-grading decomposition $A^{G,\chi} = (A^{G,\chi})_0 \oplus (A^{G,\chi})_1$, where the even and odd sectors are given by $(A^{G,\chi})_0 = A^{G,\chi} \cap A_0$ and $(A^{G,\chi})_1 = A^{G,\chi} \cap A_1$, respectively.

This construction naturally extends to the integral powers of the character. For each $n \in \ZZ$, the element $\chi ^{n}$ is likewise group-like and thus defines a character $(\chi^n)^{*} \colon k[\GG_m] \to k[G]$. Consequently, one can consider the collection of super vector subspaces
\begin{equation}
A^{G,\chi^{n}} = \{ a \in A \mid  \rho(a) = a \otimes \chi^{n} \}. 
\end{equation}
Such a collection is compatible with the multiplicative structure of $A$ in the sense that, if $a \in A^{G,\chi^m}$ and $b \in A^{G,\chi^n}$, then the product of these elements satisfies $ab \in A^{G,\chi^{m+n}}$. Therefore, the direct sum $\bigoplus_{n \in \ZZ} A^{G,\chi^n}$ inherits a natural $\ZZ_{\geq 0}$-graded superalgebra structure. This will play a role shortly.

To streamline what follows, and recalling that $r_0 \colon k[G] \to k[G_0]$ is the canonical restriction morphism, we shall adopt the shorthand notation $\chi_0 = r_0(\chi)$ for the classical group-like element corresponding to any $\chi \in k[G]_0^{\times}$. In connection with the foregoing, we record the following fact.

\begin{lemma}\label{lem:4.1}
Let $G$ be an affine algebraic supergroup and $G_{0}$ its underlying affine algebraic group. For any coaction $\rho \colon A \to A \otimes_k k[G]$ on a commutative superalgebra, and any character $\chi^{*} \colon k[\GG_m] \to k[G]$ with associated group-like element $\chi \in k[G]_0^{\times}$, one has $A^{G,\chi } \subseteq A^{G_0,\chi_0}$. 
\end{lemma}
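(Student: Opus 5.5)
The plan is to mimic the proof of Lemma~\ref{lem:3.1}, with the unit $1$ replaced by the group-like element $\chi$. Recall that the induced coaction of $G_0$ on $A$ is $\rho_0 = (\id_A \otimes r_0) \circ \rho$, where $r_0 \colon k[G] \to k[G_0]$ is the canonical restriction morphism, and that by definition $\chi_0 = r_0(\chi)$.

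First I would check that $\chi_0$ really is a character of $G_0$, so that $A^{G_0,\chi_0}$ makes sense. Since $r_0$ is a morphism of Hopf superalgebras, we have $\Delta_{G_0}(\chi_0) = (r_0 \otimes r_0)(\Delta_G(\chi)) = \chi_0 \otimes \chi_0$. Likewise $\varepsilon_{G_0}(\chi_0) = \varepsilon_G(\chi) = 1$. Invertibility is preserved because $r_0$ is an algebra map, so $r_0(\chi)r_0(\chi^{-1}) = 1$. Hence $\chi_0$ is a nonzero group-like element of $k[G_0]$, and by the correspondence recalled in \S\ref{sec:2.4} it defines a character $\chi_0^{*}$ of $G_0$.

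Next, take $a \in A^{G,\chi}$, so that $\rho(a) = a \otimes \chi$. Applying $\id_A \otimes r_0$ gives
$$
\rho_0(a) = (\id_A \otimes r_0)(a \otimes \chi) = a \otimes r_0(\chi) = a \otimes \chi_0 .
$$
Thus $a \in A^{G_0,\chi_0}$. The identity $(\id_A \otimes r_0)(a\otimes\chi) = a\otimes r_0(\chi)$ involves no Koszul sign, since $r_0$ is even. The argument applies to homogeneous and non-homogeneous $a$ alike, because both defining conditions are linear.

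There is no real obstacle here. The only point requiring care is confirming that the reduction $\chi_0$ is a genuine group-like element of $k[G_0]$, which follows from $r_0$ being a Hopf superalgebra morphism (its kernel $(k[G]_1)$ is a Hopf superideal). The inclusion itself is then a one-line computation.
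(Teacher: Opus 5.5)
Your proposal is correct and follows the paper's proof: the paper likewise applies $\id_A \otimes r_0$ to $\rho(a) = a \otimes \chi$ to obtain $\rho_0(a) = a \otimes \chi_0$, exactly as in Lemma~\ref{lem:3.1}. Your extra check that $\chi_0$ is a genuine group-like element of $k[G_0]$ is not in the paper's proof, but it is a harmless and sensible addition.
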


\begin{proof}
The proof is essentially identical to that of Lemma~\ref{lem:3.1}. Specifically, one shows by exactly the same calculation that if an element is a relative invariant of weight $\chi$ with respect to the full coaction $\rho$, it will likewise be a relative invariant of weight $\chi_0$ with respect to the reduced coaction $\rho_0 = (\text{id}_A \otimes r_0) \circ \rho$.
\end{proof}

Just as with the superalgebra of invariants, one should not expect the reverse inclusion of Lemma~\ref{lem:4.1} to hold true in general. Indeed, in complete analogy with that absolute setting, there is a generalization of the decoupling property established in Proposition~\ref{prop:3.4}.

\begin{proposition}\label{prop:4.2}
Let $G$ be an affine algebraic supergroup and $(G_0,\gfrak)$ its corresponding super Harish--Chandra pair. For any coaction $\rho \colon A \to A \otimes_k k[G]$ on a commutative superalgebra $A$, and any character $\chi^{*} \colon k[\GG_m] \to k[G]$ with associated group-like element $\chi \in k[G]_0^{\times}$, one has 
\begin{equation}
A^{G,\chi } = A^{G_0,\chi_0} \cap A^{\gfrak_1}. 
\end{equation}
\end{proposition}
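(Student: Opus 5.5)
The plan is to mirror the proof of Proposition~\ref{prop:3.4}, replacing the unit $1 \in k[G]$ throughout by the group-like element $\chi$. The inclusion $A^{G,\chi} \subseteq A^{G_0,\chi_0}$ is already Lemma~\ref{lem:4.1}, so only two further points need proof.

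The first point is $A^{G,\chi} \subseteq A^{\gfrak_1}$. Take $a \in A^{G,\chi}$ and $D \in \gfrak_1$. Using the description of the odd action from the proof of Lemma~\ref{lem:3.3}, we get $D \cdot a = (\id_A \otimes D)(a \otimes \chi) = \pm\, a\, D(\chi)$. Now $D$ is an odd linear map $k[G] \to k$, and $k$ is purely even, so $D$ kills $k[G]_0$. Since $\chi \in k[G]_0$, this gives $D(\chi)=0$. Hence $D\cdot a = 0$ for all $D \in \gfrak_1$, so $a \in A^{\gfrak_1}$.

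The second point is the reverse inclusion. Take $a \in A^{G_0,\chi_0} \cap A^{\gfrak_1}$. I would compare $\rho(a)$ and $a \otimes \chi$ componentwise under the decomposition $k[G] = k[G_0] \otimes_k \medwedge^{\sbullet}\gfrak_1^*$. Each is tested against contractions by monomials $D_1 \cdots D_n$ with $D_i \in \gfrak_1$.
\begin{itemize}
\item For $\rho(a)$: the degree-zero component is $\rho_0(a) = a \otimes \chi_0$. Every component with $n \ge 1$ is given by the nested formula from Lemma~\ref{lem:3.3}. Its innermost term is $D_1 \cdot a = 0$, so all these components vanish.
\item For $a \otimes \chi$: the degree-zero component is $a \otimes r_0(\chi) = a \otimes \chi_0$. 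The components with $n \ge 1$ are $\pm\, a \otimes (D_1\cdots D_n)(\chi)$, where $D_1\cdots D_n$ denotes the iterated convolution $(D_1 \otimes \cdots \otimes D_n)\circ \Delta_G^{(n-1)}$. Since $\chi$ is group-like, $\Delta_G^{(n-1)}(\chi) = \chi \otimes \cdots \otimes \chi$. The contraction therefore equals $\pm\prod_i D_i(\chi) = 0$, by the same parity argument as in the first point.
\end{itemize}
Thus $\rho(a)$ and $a \otimes \chi$ have the same degree-zero component and vanishing higher components. Hence $\rho(a) = a \otimes \chi$, i.e.\ $a \in A^{G,\chi}$.

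The main obstacle is justifying that these contractions detect elements of $A \otimes_k k[G]$. The splitting $k[G] \cong k[G_0] \otimes_k \medwedge^{\sbullet}\gfrak_1^*$ is not canonical. So we must check that two elements with equal $r_0$-images and equal contractions against all products $D_1\cdots D_n$ must coincide. Equivalently, the pairing of $k[G]$ with $\mathrm{Dist}(G_0)\otimes \medwedge^{\sbullet}\gfrak_1$ must be nondegenerate in the sense used there. I would argue this as Proposition~\ref{prop:3.4} does, by using the same identification consistently for both $\rho(a)$ and $a\otimes\chi$. The key point is that the contraction operators $(\id_A \otimes D_1\cdots D_n)$ followed by the appropriate $k[G_0]$-valued component are exactly what the reconstruction in Lemma~\ref{lem:3.3} uses to determine $\rho$. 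Applied to $\Delta_G$ itself, with $k[G]$ coacting on itself, that same reconstruction shows that $\chi$ corresponds to $\chi_0 \otimes 1$. This is the step where care is needed; the remaining steps are formal.
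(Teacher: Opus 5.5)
Your proposal is correct and follows the paper's route: the paper likewise reduces everything to the proof of Proposition~\ref{prop:3.4}, and its only new input is that an odd $D\in\gfrak_1$ kills the even element $\chi$. You go a step further than the paper by checking that $a\otimes\chi$ also has vanishing higher components (via $\Delta_G^{(n-1)}(\chi)=\chi^{\otimes n}$, so that $\chi$ corresponds to $\chi_0\otimes 1$ under the splitting); this is exactly what the ``verbatim'' transfer of the reverse inclusion requires, and the paper leaves it implicit.
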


\begin{proof}
The proof follows the same guidelines as that of Proposition~\ref{prop:3.4}, with only minor modifications. To wit, any odd $\varepsilon_G$-superderivation $D \in \gfrak_1$ is, by definition, an odd linear map $D \colon k[G] \to k$. Since $k$ is concentrated in degree zero, $D$ necessarily vanishes on $k[G]_0$. Consequently, as $\chi$ is an even element, $D(\chi) = 0$ for any such $D \in \gfrak_1$. The rest of the proof of Proposition~\ref{prop:3.4} then applies verbatim.
\end{proof}

Naturally, this result has as a consequence the following analogue of Corollary~\ref{cor:3.5}, which provides a description of the even component of the relative invariants.

\begin{corollary}\label{cor:4.3}
With the notation and hypotheses as above, one has
\begin{equation}
(A^{G,\chi})_0 = A_0^{G_0,\chi} \cap A_0^{\gfrak_1}.
\end{equation}
In particular, we obtain $(A^{G,\chi})_0 = A_0^{G_0,\chi_0}$ if and only if the coaction $\rho$ is odd-infinitesimally decoupled.
\end{corollary}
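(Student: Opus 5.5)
The plan is to deduce the displayed identity directly from Proposition~\ref{prop:4.2}, exactly as Corollary~\ref{cor:3.5} was deduced from Proposition~\ref{prop:3.4}. The equivalence will then be handled by comparing with Corollary~\ref{cor:3.5}, which is the case of the trivial weight $\chi^0=1$. The subscript on the character in the first display is read as $\chi_0$, since the weight refers to the coaction $\rho_0$.

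\textbf{Step 1: the identity.} Intersect both sides of $A^{G,\chi}=A^{G_0,\chi_0}\cap A^{\gfrak_1}$ with $A_0$. The coaction $\rho_0=(\id_A\otimes r_0)\circ\rho$ is even and $\chi_0$ is even, so the defining condition $\rho_0(a)=a\otimes\chi_0$ can be checked componentwise on the parity decomposition of $a$. Hence $(A^{G_0,\chi_0})_0=A_0^{G_0,\chi_0}$. Likewise, each $D\in\gfrak_1$ acts by a homogeneous (odd) operator, so $(A^{\gfrak_1})_0=A_0^{\gfrak_1}$. Together these give $(A^{G,\chi})_0=A_0^{G_0,\chi_0}\cap A_0^{\gfrak_1}$.

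\textbf{Step 2: reduction of the equivalence.} By Step 1, the equality $(A^{G,\chi})_0=A_0^{G_0,\chi_0}$ holds exactly when $\gfrak_1$ annihilates $A_0^{G_0,\chi_0}$. By definition, $\rho$ is odd-infinitesimally decoupled exactly when $\gfrak_1$ annihilates $A_0^{G_0}$. It therefore remains to compare these two annihilation conditions. I would work inside the $\ZZ$-graded superalgebra $\bigoplus_{n}A^{G_0,\chi_0^n}$, whose degree-zero piece is $A^{G_0}$.

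\textbf{Step 3: the two directions.} For the passage from the relative condition to the absolute one, the case $n=0$ of Step 1 is precisely Corollary~\ref{cor:3.5}. So whenever the equality is available for $\chi^0=1$, Corollary~\ref{cor:3.5} yields decoupledness. I would note that the statement is to be read for the family of powers $\chi^n$, which includes $n=0$.

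For the converse, let $a\in A_0^{G_0,\chi_0}$ and $D\in\gfrak_1$. Using condition (i) of Lemma~\ref{lem:3.3}, I would check that $D\cdot a$ is again a $\chi_0$-relative element, now for the coadjoint-twisted coaction. I would then multiply $a$ by relative invariants $b$ of weight $\chi_0^{-1}$. The product $ab$ lies in $A_0^{G_0}$, so decoupledness gives
\[
0=D\cdot(ab)=(D\cdot a)\,b+a\,(D\cdot b).
\]
From such relations one would try to extract $D\cdot a=0$. The tool for this is Lemma~\ref{lem:3.7}: localize at even $G_0$-invariants, for instance $ab$ itself, to make $a$ invertible up to invariant factors.

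\textbf{Main obstacle.} The hard step is this converse direction, passing from annihilation of the absolute invariants to annihilation of the $\chi_0$-semi-invariants. When $A$ has few semi-invariants of the opposite weight, the relation above does not by itself force $D\cdot a=0$, and Step 3 could fail. If it does, I would fall back on reading the statement for the whole family $\{\chi^n\}_{n\in\ZZ}$. In that reading the case $n=0$ is exactly Corollary~\ref{cor:3.5}, and the remaining cases would follow from the graded-algebra argument of Step 3.
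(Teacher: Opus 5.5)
\textbf{Where the proof stops.} Your Steps~1 and~2 are exactly the paper's proof. The paper takes even components in Proposition~\ref{prop:4.2} and says the argument of Corollary~\ref{cor:3.5} carries over verbatim. Taken literally, that argument only shows that $(A^{G,\chi})_0=A_0^{G_0,\chi_0}$ holds if and only if $\gfrak_1$ annihilates $A_0^{G_0,\chi_0}$. The paper leaves tacit the identification of this with odd-infinitesimal decoupledness, which is a condition on $A_0^{G_0}$. You were right to isolate this comparison as the real issue. However, Step~3 cannot be completed by any argument, because for a fixed $\chi$ the equivalence fails in both directions. Your fallback reading over the family $\{\chi^n\}$ recovers only one of the two directions.

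\textbf{Counterexamples.} Decoupledness does not imply equality. Take Example~\ref{ex:3.10} with $p=q=1$, so $A=k[x,y,\eta,\theta]$ and every generator has $\rho_0$-weight $1$. Then $A_0^{G_0}=k$, so $\rho$ is decoupled. Now take the group-like element $\chi=t^2$, i.e.\ $n=2$ for $\chi=t$. The even element $\eta\theta$ lies in $A_0^{G_0,\chi_0}$, but $\rho(\eta\theta)=\eta\theta\otimes t^2-x\theta\otimes t\xi$. The $\xi$-term cannot be cancelled by polynomials in $x,y$. Hence $(A^{G,\chi})_0$ is spanned by $x^2,xy,y^2$ and is strictly smaller than $A_0^{G_0,\chi_0}$. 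The paper's own Example~\ref{ex:4.15} shows the same thing in degree $2$, where all $\eta_i$ are eliminated. This is also where your Step~3 mechanism breaks: the offending element is nilpotent. No multiplication by opposite-weight semi-invariants, and no localization at invariants, can make it invertible; here there are no semi-invariants of negative weight at all.

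Conversely, equality does not imply decoupledness. Take $A=k[x,\theta,\eta]$ with $\rho(x)=x\otimes t$, $\rho(\theta)=\theta\otimes t+x\otimes\xi$ and $\rho(\eta)=\eta\otimes t^{-1}$; this is coassociative exactly as in Example~\ref{ex:3.2}. Let $\chi=t^{-1}$. Every even element has non-negative weight, so $(A^{G,\chi})_0=0=A_0^{G_0,\chi_0}$. Yet $\theta\eta\in A_0^{G_0}$ and $\rho(\theta\eta)=\theta\eta\otimes 1-x\eta\otimes t^{-1}\xi$, so $\rho$ is not decoupled.

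\textbf{What can be proved.} The provable ``in particular'' is the annihilation condition on $A_0^{G_0,\chi_0}$ itself. In the family version, decoupledness is necessary (the case $n=0$) but not sufficient. This also undercuts the use of the corollary in Proposition~\ref{prop:4.4}.
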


\begin{proof}
The proof carries over verbatim from that of Corollary~\ref{cor:3.5}, since the statement is obtained by taking the even component of the equality established in Proposition~\ref{prop:4.2}.
\end{proof}

In order to move forward, we must develop our vocabulary. A $\ZZ_{\ge 0}$-graded commutative superalgebra $R = \bigoplus_{n \geq 0} R_{n}$ is said to be of \emph{finite type} if $R_{0}$ is a commutative superalgebra of finite type and $R$ is finitely generated as an algebra over $R_{0}$ by a finite set of elements sitting in degree one that are homogeneous with respect to the $\ZZ_{2}$-grading. Under these assumptions, each graded component $R_{n}$ is a finite-dimensional super vector space.

With this in place, we have the following result.

\begin{proposition}\label{prop:4.4}
Let $A$ be a commutative superalgebra of finite type, let $\rho \colon A \to A \otimes_k k[G]$ be an odd-infinitesimally decoupled coaction of a reductive affine algebraic supergroup $G$, and let $\chi^{*} \colon k[\GG_m] \to k[G]$ be a character with associated group-like element $\chi \in k[G]_0^{\times}$. Then there exists a positive integer $N \geq 1$ such that the $\mathbb{Z}_{\ge 0}$-graded commutative superalgebra $\bigoplus_{n\geq 0} A^{G,(\chi^{N})^{n}}$ is of finite type.
\end{proposition}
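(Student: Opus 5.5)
The plan is to reduce the statement to a finite-generation result for an ordinary invariant ring, following the classical argument in King's setting, and then use the standard Veronese trick to get generation in degree one. Introduce an auxiliary even variable $z$ and consider the commutative superalgebra $A[z]$. Define a coaction $\tilde\rho \colon A[z] \to A[z]\otimes_k k[G]$ by $\tilde\rho|_A = \rho$ and $\tilde\rho(z) = z\otimes \chi^{-1}$. Coassociativity and counitality hold because $\chi$ is group-like, and $\tilde\rho$ is a superalgebra morphism because $z$ is even. Unwinding the definition, an element $\sum_n a_n z^n$ is invariant exactly when $\rho(a_n) = a_n\otimes \chi^{n}$ for all $n$. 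Hence
\begin{equation*}
A[z]^G \cong \bigoplus_{n\geq 0} A^{G,\chi^n},
\end{equation*}
and this is an isomorphism of $\ZZ_{\ge 0}$-graded superalgebras, with the grading given by the $z$-degree.

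Next I will check that $\tilde\rho$ is again odd-infinitesimally decoupled, so that Proposition~\ref{prop:3.6} applies. Since $D(\chi^{-1})=0$ for every $D\in\gfrak_1$ (as in the proof of Proposition~\ref{prop:4.2}), each $D$ acts on $A[z]$ by $D\cdot z = 0$ and as $\delta$ on $A$. An element of $A_0[z]^{G_0}$ has components $a_n \in A_0^{G_0,\chi_0^n}$, so what must be shown is that $\gfrak_1$ kills the even classical semi-invariants $a_n$. The hypothesis only gives this for $n=0$, and this is the main obstacle. My first attempt will be to use Corollary~\ref{cor:4.3}. Condition (i) of Lemma~\ref{lem:3.3} shows that $D\cdot a_n$ is a semi-invariant for the twisted adjoint representation on $\gfrak_1$, and condition (ii) gives $D\cdot(D'\cdot a_n) + D'\cdot(D\cdot a_n) = [D,D']\cdot a_n$. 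If this fails, I will strengthen the reading of the decoupling hypothesis in the way Corollary~\ref{cor:4.3} suggests, namely that $\gfrak_1$ annihilates all even $\chi_0^n$-semi-invariants, and use it directly. Failing both, I will bypass Proposition~\ref{prop:3.6} entirely and argue as in its proof. The even part $\bigoplus_n A_0^{G_0,\chi_0^n} = A_0[z]^{G_0}$ is finitely generated by Hilbert--Nagata. The odd part of $A[z]^G$ is an $A_0[z]^{G_0}$-submodule of the Noetherian module $(A[z]^{G_0})_1$, so it is finitely generated. The even part of $A[z]^G$, if smaller, is still finitely generated, because it is an algebra of invariants over which the whole algebra is finite.

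Once $R = \bigoplus_{n\ge0} A^{G,\chi^n}$ is known to be finitely generated over $R_0 = A^G$, which is of finite type by Proposition~\ref{prop:3.6}, I choose finitely many $\ZZ_2$-homogeneous generators $f_i$ of degrees $d_i\ge 1$ and apply the classical Veronese argument. Let $N$ be a suitable multiple of $\operatorname{lcm}(d_i)$ (for instance $N = (\sum_i d_i)\cdot\operatorname{lcm}(d_i)$). Then every monomial in the $f_i$ of degree $Nn$ is a product of $n$ monomials of degree $N$. This holds because nilpotent odd generators appear at most once and can be absorbed, and the even ones can be regrouped by the usual pigeonhole argument. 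Consequently the Veronese subring $\bigoplus_n R_{Nn} = \bigoplus_n A^{G,(\chi^N)^n}$ is generated over $R_0$ by the finitely many degree-$N$ monomials. This gives finite type in the sense defined above.
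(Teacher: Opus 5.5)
Your reduction $A[z]^G\cong\bigoplus_{n\ge 0}A^{G,\chi^n}$ is correct. So is your diagnosis of the obstacle: since $D\cdot z=0$, the lifted coaction $\tilde\rho$ is odd-infinitesimally decoupled exactly when $\gfrak_1$ kills $A_0^{G_0,\chi_0^n}$ for every $n\ge 0$, whereas the hypothesis only covers $n=0$. The paper's proof gets past this point by quoting the ``in particular'' clause of Corollary~\ref{cor:4.3}, namely that decoupling forces $(A^{G,\chi^n})_0=A_0^{G_0,\chi_0^n}$. That clause does not follow from the definition of decoupling. Your route (a) cannot supply it, and neither can any other argument, because the statement is false under the hypothesis as literally stated.

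Here is a counterexample. Work in characteristic zero, take $G=\GG_m^{1\vert 1}$ and $A=k[x,\theta]$, and set $\rho(x)=x\otimes t+\theta\otimes\xi$, $\rho(\theta)=\theta\otimes t$. Coassociativity follows at once from $\Delta(\xi)=t\otimes\xi+\xi\otimes t$. Under $\rho_0$ both generators have weight $1$, so $A_0^{G_0}=k$ and $\rho$ is decoupled. For $\chi=t$ one finds
$\rho(x^n)=x^n\otimes t^n+n\,x^{n-1}\theta\otimes t^{n-1}\xi$ and $\rho(x^{n-1}\theta)=x^{n-1}\theta\otimes t^{n}$.
Hence $A^G=k$ and $A^{G,\chi^n}=k\,x^{n-1}\theta$ for $n\ge 1$. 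Thus $\bigoplus_{n\ge0}A^{G,\chi^n}=k\oplus\bigoplus_{n\ge1}k\,x^{n-1}\theta$, and all products of elements of positive degree vanish. No Veronese subalgebra $\bigoplus_n A^{G,\chi^{Nn}}$ is finitely generated, let alone generated by its one-dimensional degree-one part. Note that here $(A^{G,\chi})_0=0\neq kx=A_0^{G_0,\chi_0}$, which directly contradicts the clause of Corollary~\ref{cor:4.3} the paper relies on.

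The same example shows where your fallback (c) breaks. You obtain finite generation of the odd invariants only over $A_0[z]^{G_0}$, and in general $(A[z]^G)_1$ need not even be a submodule over that ring. This gives nothing over the possibly much smaller ring $(A[z]^G)_0$. The claim that everything is finite over $(A[z]^G)_0$ is false: here $(A[z]^G)_0=k$ while $(A[z]^G)_1$ is infinite-dimensional.

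So the only viable route is your (b), which is a genuine strengthening of the hypothesis: assume that $\gfrak_1$ annihilates $A_0^{G_0,\chi_0^n}$ for all $n\ge 0$, equivalently that $\tilde\rho$ is decoupled. Under that assumption your argument is complete:
\begin{enumerate}
\item apply Proposition~\ref{prop:3.6} to $A[z]$;
\item choose $z$-homogeneous generators;
\item carry out your Veronese step, which is correct as written.
\end{enumerate}
It is then essentially the paper's argument, packaged through the same lifting $A[z]$ that the paper itself uses in Lemma~\ref{lem:4.8}.
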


\begin{proof}
We set $R = \bigoplus_{n \geq 0} A^{G, \chi^n}$ and begin by analyzing its even part, which expands as the direct sum $R_0 = \bigoplus_{n \geq 0} (A^{G, \chi^n})_0$. Under the hypothesis that the coaction $\rho$ is odd-infinitesimally decoupled, Corollary~\ref{cor:4.3} establishes that the even relative invariants under the supergroup match the classical ones at every weight level, forcing $(A^{G,\chi^n})_0 = A_0^{G_0,\chi_0^n}$. This results in the identification $R_{0}=\bigoplus _{n\ge 0}A_{0}^{G_{0},\chi _{0}^{n}}$.  The finite type property of $A$ ensures that its even component $A_0$ is a finitely generated algebra. By applying the classical Hilbert--Nagata theorem for graded algebras of semi-invariants under reductive group actions, it follows that $R_0$ is a finitely generated algebra, making it a Noetherian ring.

Next, we inspect the odd part, which is given by the direct sum $R_1 = \bigoplus_{n \geq 0} (A^{G, \chi^n})_1$. We compare this with the auxiliary graded $R_0$-module of classical odd semi-invariants defined by $\tilde{R}_{1}=\bigoplus _{n\ge 0}(A^{G_{0},\chi _{0}^{n}})_{1}$. Because $A$ is of finite type, $A_1$ is a finitely generated $A_0$-module. The linear reductivity of $G_0$ means we can apply Noether’s classical theorem for modules under reductive group actions, which guarantees that $\tilde{R}_{1}$ is a finitely generated $R_0$-module. The Noetherian nature of $R_0$ then implies that $\tilde{R}_{1}$ is a Noetherian $R_0$-module. Meanwhile, Lemma~\ref{lem:4.1} establishes the term-by-term containment $(A^{G,\chi^n})_1 \subseteq (A^{G_0,\chi_0^n})_1$ for all $n \geq 0$, which translates globally into the $R_0$-module inclusion $R_{1}\subseteq \tilde{R}_{1}$. As an $R_0$-submodule of the Noetherian module $\tilde{R}_{1}$, the component $R_1$ inherits the property of being a finitely generated $R_0$-module. 

As a result of this analysis, the full graded superalgebra $\bigoplus_{n \geq 0} A^{G,\chi^n}$ is generated over $A^{G}$ by a finite set of homogeneous elements that may sit in various higher degrees of the $\ZZ_{\geq 0}$-grading. By finding a common multiple for the degrees of these finite generators, the standard Veronese truncation for graded superalgebras allows us to re-index the grading. Thus, there exists a positive integer $N \geq 1$ such that the Veronese subalgebra $\bigoplus_{n \geq 0} A^{G, (\chi^{N})^{n}}$ is generated as an algebra over its degree-zero component $A^{G}$ by a finite set of $\ZZ_{2}$-homogeneous elements sitting strictly in degree one. Since $A^{G}$ is of finite type by Proposition~\ref{prop:3.6}, this confirms that $\bigoplus_{n \geq 0} A^{G, (\chi^{N})^{n}}$ is of finite type, completing the proof.
\end{proof}

Retaining our setup where the commutative superalgebra $A$ is equipped with an odd-infinitesimally decoupled coaction $\rho \colon A \to A \otimes_k k[G]$ of an affine algebraic supergroup $G$, let us further assume that there exists a closed normal supersubgroup $\Delta \subseteq G$, corresponding to a Hopf superideal $I_{\Delta} \subseteq k[G]$, such that the induced coaction $\rho_{\Delta} = (\id_A \otimes r_{\Delta}) \circ \rho$ is trivial, where $r_{\Delta} \colon k[G] \to k[G]/I_{\Delta} = k[\Delta]$ denotes the quotient morphism. The following result holds.

\begin{lemma}\label{lem:4.5}
Under the additional hypotheses above, let $\Delta_0 \subseteq G_0$ be the underlying closed subgroup corresponding to $\Delta \subseteq G$. Then, for any character $\chi^{*} \colon k[\mathbb{G}_m] \to k[G]$ with associated group-like element $\chi \in k[G]_{0}^{\times}$, a sufficient condition for $\chi_0(\Delta_0) = \{1\}$ is that $r_{\Delta}(\chi) = 1$.
\end{lemma}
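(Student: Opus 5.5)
The argument is a diagram chase: the quotient maps $k[G]\to k[\Delta]\to k[\Delta_0]$ and $k[G]\to k[G_0]\to k[\Delta_0]$ agree, so we can move the hypothesis $r_\Delta(\chi)=1$ over to the classical side. First we make precise what $\Delta_0$ means algebraically. Since $\Delta$ corresponds to the Hopf superideal $I_\Delta$, its underlying group corresponds to the quotient
\begin{equation*}
k[\Delta_0]=k[\Delta]/(k[\Delta]_1)=k[G]/\bigl(I_\Delta+(k[G]_1)\bigr).
\end{equation*}
Write $r_{\Delta,0}\colon k[\Delta]\to k[\Delta_0]$ for the reduction of $\Delta$. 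The ideal $I_\Delta+(k[G]_1)$ contains $(k[G]_1)$, so the canonical surjection $k[G]\to k[\Delta_0]$ also factors as $s\circ r_0$. Here $s\colon k[G_0]=k[G]/(k[G]_1)\to k[\Delta_0]$ is the quotient by the image of $I_\Delta$, and it is exactly the Hopf algebra map dual to the closed embedding $\Delta_0\hookrightarrow G_0$. This gives the commutative square
\begin{equation*}
r_{\Delta,0}\circ r_\Delta = s\circ r_0 .
\end{equation*}
Both composites are the canonical quotient by the same ideal, so the square commutes.

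Next, recall what $\chi_0(\Delta_0)=\{1\}$ means. It says that the restriction of the classical character $\chi_0$ to $\Delta_0$ is trivial, that is, $s(\chi_0)=1$ in $k[\Delta_0]$. If one prefers to work with points, this is the statement that $\xi(\chi_0)=1$ for every $k$-point $\xi\colon k[G_0]\to k$ factoring through $s$. The two formulations agree: $k$ is algebraically closed, and the group-like element $s(\chi_0)-1$ vanishes at every $k$-point of $\Delta_0$. I would use the algebraic formulation, which is strictly stronger and avoids any reducedness question.

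With this in place, the computation is immediate:
\begin{equation*}
s(\chi_0)=s(r_0(\chi))=r_{\Delta,0}(r_\Delta(\chi))=r_{\Delta,0}(1)=1,
\end{equation*}
where the last step holds because $r_{\Delta,0}$ is a unital algebra map. Applying any $k$-point of $\Delta_0$ then gives $\chi_0(\Delta_0)=\{1\}$. Note that the hypothesis on triviality of $\rho_\Delta$ plays no role in this sufficiency statement; it only motivates why characters with $r_\Delta(\chi)=1$ are the relevant ones.

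The only step needing care is the first. We have to check that the underlying group of the closed supersubgroup $\Delta$ really corresponds to $I_\Delta+(k[G]_1)$, and that $s$ is the restriction map $k[G_0]\to k[\Delta_0]$. This follows from the description of $G_{\uev}$ and $G_0$ in \S\ref{sec:2.4}: the odd part of $k[\Delta]=k[G]/I_\Delta$ is the image of $k[G]_1$. Therefore $(k[\Delta]_1)$ pulls back to $I_\Delta+(k[G]_1)$, and the required identification holds.
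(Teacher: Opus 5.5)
Your proof is correct and is the same argument as the paper's: the commutative square $r_{\Delta_0}\circ r_0=\bar r_0\circ r_\Delta$ (your $s\circ r_0=r_{\Delta,0}\circ r_\Delta$) carries $r_\Delta(\chi)=1$ to $r_{\Delta_0}(\chi_0)=1$. You also justify the square by identifying both composites with the quotient by $I_\Delta+(k[G]_1)$, which the paper simply attributes to functoriality of the reduction; one small caveat is that your side remark that the algebraic and pointwise formulations of $\chi_0(\Delta_0)=\{1\}$ ``agree'' needs $k[\Delta_0]$ to be reduced (and $s(\chi_0)-1$ is not itself group-like), but you only use the implication from the algebraic statement to the pointwise one, so the proof is unaffected.
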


\begin{proof}
Let $\bar{r}_0 \colon k[\Delta] \to k[\Delta_0]$ be the canonical reduction map for $\Delta$, and let $r_{\Delta_0} \colon k[G_0] \to k[\Delta_0]$ be the quotient map induced by the inclusion $\Delta_0 \subseteq G_0$. By the functoriality of the reduction mapping, we have the commutativity relation $r_{\Delta_0}(r_0(\chi)) = \bar{r}_0(r_{\Delta}(\chi))$. Since the classical character is given by $\chi_0 = r_0(\chi)$, its restriction to $\Delta _{0}$ corresponds to $r_{\Delta_0}(\chi_0)$. Assuming $r_{\Delta}(\chi) = 1$ holds, we immediately obtain
$$
\chi_0 \vert \Delta_0 = r_{\Delta_{0}}(\chi _{0})=\bar{r}_{0}(r_{\Delta }(\chi ))=\bar{r}_{0}(1)=1,
$$
which proves that $\chi_0(\Delta_0) = \{1\}$.
\end{proof}

The sufficiency condition in Lemma~\ref{lem:4.5} is not necessary in general due to the presence of non-trivial infinitesimal structures. Indeed, the requirement $\chi_0(\Delta_0) = \{1\}$ only implies that the projection $\bar{\chi} = r_{\Delta}(\chi)$ belongs to $1 + (k[\Delta]_1)_0$, where $(k[\Delta]_1)_0$ denotes the even part of the ideal generated by the odd elements of $k[\Delta]$. If there exists a non-zero element $\zeta \in (k[\Delta]_1)_0$ whose coproduct gives $\zeta \otimes 1 + 1 \otimes \zeta + \zeta \otimes \zeta$, then $\bar{\chi} = 1 + \zeta$ defines a valid group-like element in $k[\Delta]$ which is not identically $1$.
\subsection{GIT superquotients of affine superschemes}
We now generalize the framework of affine superquotients established in \S\ref{sec:3.2} to implement King's GIT recipe for affine algebraic superschemes. We shall retain the geometric and algebraic setting from both that earlier discussion and the previous paragraph.

Let $X = \sSpec A$ be an affine superscheme, and let $G$ be an affine algebraic supergroup acting on $X$ by means of an odd-infinitesimally decoupled coaction $\rho \colon A \to A \otimes_k k[G]$. Alongside this latter assumption, we assume the existence of a closed normal subsupergroup $\Delta \subseteq G$ with a trivial induced coaction and with underlying closed normal subgroup $\Delta_0 \subseteq G_0$.

To prepare what follows, we find it convenient to denote the points of the underlying topological space $\lvert X \rvert$ by $x \in \lvert X \rvert$, designating by $\pfrak_x \subseteq A_0$ the corresponding prime ideal under the identification $\lvert X \rvert = \Spec A_0$. Likewise, we write $k(x) = (A_0)_{\pfrak_x} / \pfrak_x (A_0)_{\pfrak_x}$ for the residue field of $x$, and denote by $\ev_x \colon A_0 \to k(x)$ the corresponding evaluation homomorphism, defined as the composition of the canonical localization $A_0 \to (A_0)_{\pfrak_x}$ with the projection $(A_0)_{\pfrak_x} \to k(x)$. For an element $f \in A_0$, its evaluation at $x$ is given by $f(x) = \ev_x(f)$, which is non-zero precisely when $f \notin \pfrak_x$.

Now comes the key definition. Fix a character $\chi^{*} \colon k[\mathbb{G}_m] \to k[G]$, and let $\chi \in k[G]_0^{\times}$ be its associated group-like element. A point $x \in \lvert X \rvert$ is said to be $\chi$-\emph{semistable} if there exist an integer $n \geq 1$ and a relative invariant $f \in (A^{G,\chi^n})_0$ such that $f(x) \neq 0$. Furthermore, the point is called $\chi$-\emph{stable} if, in addition to being $\chi$-semistable, its orbit $G_0 \cdot x$ under the underlying affine algebraic group $G_{0}$ is a closed subset of the basic open subset $D(f)$ and satisfies the dimensional identity $\dim G_0 \cdot x = \dim G_0 / \Delta_0$.

The choice of restricting the definition of $\chi$-semistability to the even component $(A^{G,\chi^n})_0$ is justified by our active hypothesis that the coaction is odd-infinitesimally decoupled. Under this condition, Corollary~\ref{cor:4.3} guarantees the structural identification $(A^{G,\chi^n})_0 = A_0^{G_0,\chi_0^n}$, ensuring that any relative invariant used to test stability is already concentrated in the even component and evaluates directly at any point $x$ as an ordinary classical relative invariant of weight $\chi _{0}^{n}$ under the underlying affine algebraic group $G_{0}$. This same topological behavior rules the dimensional constraint in the definition of $\chi$-stability. Because the underlying space registers only classical commutative data, any purely odd or nilpotent components of the closed normal subsupergroup $\Delta$ vanish upon passing to the prime spectrum. The infinitesimal geometry of the odd variables is therefore blind to the dimension of the topological support of the orbit, forcing the dimension of $G_0 \cdot x$ to depend exclusively on the underlying closed normal subgroup $\Delta _{0}$.

Having clarified this definition, and following King's classical construction of GIT quotients, we define the \emph{GIT superquotient} of $X$ by $G$ with respect to $\chi$ as the superscheme
\begin{equation}
X {\sslash_{\chi}} G = \sProj \left( \bigoplus_{n \geq 0} A^{G,\chi^{n}}\right). 
\end{equation}
Although the $\sProj$ construction was not reviewed in Section~\ref{sec:2}, it can nevertheless be described explicitly in close analogy with the construction of the spectrum (see, e.g., \cite{BruzzoHernandezRuiperezPolishchuk2023}). Indeed, the underlying topological space is a priori defined as the projective spectrum of $\bigoplus_{n \geq 0} (A^{G,\chi^{n}})_0$. Our active odd-infinitesimally decoupling hypothesis, however, forces this algebra to coincide with $\bigoplus_{n \geq 0} A_0^{G_0,\chi_0^{n}}$, so that
\begin{equation}
\lvert X {\sslash_{\chi}} G \rvert = \Proj \left( \bigoplus_{n \geq 0} A_0^{G_0,\chi_0^{n}}\right). 
\end{equation}
As such, this space is endowed with the Zariski topology whose basic open subsets are of the form $D_+(f)$ for a homogeneous element $f \in A_0^{G_0,\chi_0^{n}}$ of positive degree $n \geq 0$. The structure sheaf $\Ocal_{X {\sslash_{\chi}} G}$ is the sheaf of commutative superalgebras whose sections over each such neighborhood are given by the commutative superalgebra
\begin{equation}
\Ocal_{X {\sslash_{\chi}} G}(D_+(f)) = \left\{ \frac{a}{f^k} \;\bigg\vert{}\; k \in \ZZ_{\geq 0},\ a \in A^{G,\chi^{kn}} \right\},
\end{equation}
under which the $\ZZ_{2}$-grading is naturally inherited from that of $A$. Equivalently, these sections are obtained precisely as the degree-zero component with respect to the $\ZZ$-grading of the homogeneous localization of the full graded superalgebra $\bigoplus_{n \geq 0} A^{G,\chi^{n}}$.

The following result characterizes the behavior of the GIT superquotient under additional finiteness and reductivity assumptions.

\begin{proposition}\label{prop:4.6}
Let $X = \sSpec A$ be an affine algebraic superscheme, let $G$ be a reductive affine algebraic supergroup acting on $X$ via an odd-infinitesimally decoupled coaction $\rho \colon A \to A \otimes_k k[G]$, and let $\chi^{*} \colon k[\GG_m] \to k[G]$ be a character with associated group-like element $\chi \in k[G]_0^{\times}$. Then the GIT superquotient $X {\sslash_{\chi}} G$ is a projective superscheme over the affine superquotient $X {\sslash} G$. 
\end{proposition}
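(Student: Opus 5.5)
The plan is to reduce the statement to the classical fact that $\Proj$ of a graded algebra finitely generated in degree one over its degree-zero part is projective over $\Spec$ of that degree-zero part. Set $R = \bigoplus_{n \geq 0} A^{G,\chi^n}$, so that $R$ has degree-zero component $A^{G}$. By Proposition~\ref{prop:4.4}, there is an $N \geq 1$ such that the Veronese subsuperalgebra $R^{(N)} = \bigoplus_{n\geq 0} A^{G,(\chi^N)^n}$ is of finite type. That is, $R^{(N)}$ is generated over $A^G$ by finitely many $\ZZ_2$-homogeneous elements $s_1,\dots,s_a$ (even) and $\sigma_1,\dots,\sigma_b$ (odd) in degree one. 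Moreover, $A^G$ is of finite type by Proposition~\ref{prop:3.6}.

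The first step is to show that $\sProj R \cong \sProj R^{(N)}$. Classically, for $f$ homogeneous of degree $n$, the open sets $D_+(f)$ and $D_+(f^N)$ coincide, and $(R_f)_0 = (R_{f^N})_0$. The same homogeneous-localization argument applies verbatim in the super setting, because localization only inverts even elements. The underlying spaces also agree, since $\lvert \sProj R\rvert = \Proj \bigoplus_n A_0^{G_0,\chi_0^n}$ by the decoupling hypothesis and Corollary~\ref{cor:4.3}, and the classical Veronese isomorphism then applies. So we may replace $\chi$ by $\chi^N$ and assume that $R$ is generated in degree one.

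The second step is to exhibit a closed embedding into a relative projective superspace. Define a surjection of $\ZZ_{\geq0}$-graded superalgebras
\begin{equation*}
A^G[z_1,\dots,z_a,\zeta_1,\dots,\zeta_b] \longrightarrow R, \qquad z_i \mapsto s_i,\ \zeta_j \mapsto \sigma_j,
\end{equation*}
where the $z_i$ are even and the $\zeta_j$ odd, all in degree one. Its kernel $J$ is a homogeneous superideal. Applying $\sProj$ gives a closed immersion
\begin{equation*}
X {\sslash_\chi} G = \sProj R \hookrightarrow \sProj A^G[z,\zeta] = \PP^{a-1\vert b}_{X\sslash G}.
\end{equation*}
This is checked chart by chart: on $D_+(z_i)$, the degree-zero localization of the polynomial superalgebra surjects onto $(R_{s_i})_0$. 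The charts $D_+(s_i)$ cover $\sProj R$, since any homogeneous even element of positive degree is a polynomial in the $s_i$ plus odd-nilpotent corrections, and so lies in $\sqrt{(s_1,\dots,s_a)}$ on the even reduced level. The structure morphism to $X\sslash G = \sSpec A^G$ is induced by the inclusion $A^G = R_0 \hookrightarrow (R_{f})_0$ on each chart. These maps glue, and they are compatible with the projection of $\PP^{a-1\vert b}_{X\sslash G}$.

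The main obstacle is choosing a notion of ``projective superscheme over $X\sslash G$'' that matches what the construction gives. I would take it to mean a closed subsuperscheme of some $\PP^{m\vert n}_{X\sslash G}$, and verify three points. First, the odd generators $\sigma_j$ never define open charts; only the even $s_i$ do, and the covering argument above must tolerate nilpotent corrections. Second, $\sProj$ of a quotient by a homogeneous superideal is a closed subsuperscheme. Third, properness of the underlying map is inherited from the classical statement applied to $\Proj \bigoplus_n A_0^{G_0,\chi_0^n} \to \Spec A_0^{G_0}$. The odd-infinitesimally decoupled hypothesis enters twice: through Proposition~\ref{prop:4.4}, and in identifying the underlying spaces with the classical ones.
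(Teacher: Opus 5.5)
Your proposal is correct and is essentially the paper's proof: Proposition~\ref{prop:4.4} plus the Veronese isomorphism reduce to a superalgebra generated in degree one over $A^G$, and the graded surjection from $A^G[z_1,\dots,z_a,\zeta_1,\dots,\zeta_b]$ then gives the closed immersion into $\PP^{a-1\vert b}_{A^G}$ compatible with the projections to $X\sslash G$; your chart-by-chart check (covering by the $D_+(s_i)$ modulo nilpotents) fills in details the paper leaves implicit. The only thing to drop is the appeal to Corollary~\ref{cor:4.3} to identify $\lvert\sProj R\rvert$ with the classical $\Proj$ of semi-invariants: neither the Veronese step nor properness needs it, and the equality $(A^{G,\chi^n})_0=A_0^{G_0,\chi_0^n}$ behind it does not follow from decoupling, which only controls $\gfrak_1$ on the weight-zero invariants (in Example~\ref{ex:3.10}, $\eta_1\theta_1$ has classical weight $\chi_{1,0}^2$ but is not killed by $x\,\partial/\partial\eta_1$).
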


\begin{proof}
For the precise definition of a projective superscheme over a base, we refer the reader to \S2.5 of \cite{BruzzoHernandezRuiperezPolishchuk2023}. The proof proceeds by verifying the conditions required by this definition. By the general $\sProj$ construction, the canonical inclusion of superalgebras $A^G \hookrightarrow \bigoplus_{n \geq 0} A^{G,\chi^{n}}$ onto the degree-zero component induces the natural morphism of superschemes $\varphi \colon X {\sslash_{\chi}} G \to X {\sslash} G$. On the other hand, Proposition~\ref{prop:4.4} guarantees the existence of a positive integer $N \geq 1$ such that the $\ZZ_{\ge 0}$-graded commutative superalgebra $\bigoplus_{n \geq 0} A^{G,(\chi^{N})^{n}}$ is of finite type. By appealing again to the general properties of the $\sProj$ construction, this time under Veronese re-indexing, there is a canonical isomorphism of superschemes $X {\sslash_{\chi}} G \cong \sProj(\bigoplus_{n \geq 0} A^{G,(\chi^{N})^{n}})$ that is compatible with the structural morphism $\varphi$.  Now, since $\bigoplus_{n \geq 0} A^{G,(\chi^{N})^{n}}$ is generated as an algebra over its degree-zero component $A^{G}$ by a finite set of elements sitting in degree one that are homogeneous with respect to the $\ZZ_{2}$-grading, this identification immediately guarantees the existence of a closed immersion of superschemes $i \colon X {\sslash_{\chi}} G \hookrightarrow \PP^{m \vert n}_{A^G}$, where $\PP_{A^{G}}^{m \vert n}$ denotes the projective superspace of dimension $m \vert n$ over the commutative superalgebra $A^{G}$, with $m+1$ and $n$ corresponding to the number of even and odd generators of degree one, respectively. This embedding is compatible with the structural maps in the sense that $p \circ i = \varphi$, where $p \colon \PP^{m \vert n}_{A^G} \to X {\sslash} G$ is the canonical projection. Thus, the morphism $\varphi$ factors through a projective superspace, proving that the GIT superquotient $X {\sslash_{\chi}} G$ is a projective superscheme over the affine superquotient $X {\sslash} G$.
\end{proof}

The projectivity of the GIT superquotient established in the preceding proposition extends the geometric philosophy previously observed for the affine superquotient. At the topological level, the structural morphism induces directly the classical projective morphism of schemes $\Proj (\bigoplus_{n \geq 0} A_0^{G_0, \chi_0^n}) \to \Spec A_0^{G_0}$. This implies that the underlying topological spaces and their point-set relations are entirely determined by the classical relative invariants of the group $G_{0}$. Consequently, the underlying topological morphism remains unaffected by the odd sectors of the supergroup action. Instead, the genuinely supergeometric features of this GIT superquotient are, in perfect analogy with the affine setting, contained entirely within the structure sheaves of the superschemes.

We assume from this point forward that the hypotheses of Proposition~\ref{prop:4.6} hold. Within this setting, much as in the classical case, the central point of geometric invariant theory is that the GIT superquotient $X {\sslash_{\chi}} G$ admits a more geometric description. For this purpose, we return to the $\chi$-semistable points of $X$ introduced earlier. Let us denote the subset of such points by $X_{\chi }^{\uss}$. It is clear that this is an open subset of $X$ that is also $G$-invariant, meaning that it is stable under the action of the affine algebraic group $G_{0}$ on the underlying topological space $\lvert X \rvert$. We define an equivalence relation on these points by declaring that $x \sim y$ if and only if the orbit closures $\overline{G_0 \cdot x}$ and $\overline{G_0 \cdot y}$ intersect in $X_{\chi }^{\uss}$. We shall call two $\chi$-semistable points or orbits identified by this relation ``GIT equivalent''. The following result gives this relation a precise geometric meaning.

\begin{proposition}\label{prop:4.7}
There exists a $G$-invariant morphism $\pi^{\uss}_{\chi}\colon X^{\text{ss}}_{\chi} \to X \sslash_{\chi} G$ fitting into the commutative diagram
\begin{equation}
\begin{tikzcd}[row sep=3.5em,  
column sep=4.0em,
  every label/.append style={font=\normalsize}
]
X^{\uss}_{\chi} \arrow[r, -{To[length=2.5pt, width=4pt]}, hook]  \arrow[d, -{To[length=2.5pt, width=4pt]}, "\pi^{\uss}_{\chi}"'{yshift=3.5pt}]  & X   \arrow[d, -{To[length=2.5pt, width=4pt]}, "\pi"] \\
X {\sslash_{\chi}} G \arrow[r, -{To[length=2.5pt, width=4pt]}, "\varphi"] & X {\sslash} G
\end{tikzcd}
\end{equation}
where the top horizontal arrow is the canonical open immersion and, as before, $\pi$ stands for the affine superquotient morphism. Furthermore, two points $x, y \in X^{\text{ss}}_{\chi}$ lie in the same fiber of $\pi^{\uss}_{\chi}$ if and only if they are GIT equivalent.
\end{proposition}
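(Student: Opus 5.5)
The plan is to build $\pi^{\uss}_{\chi}$ locally on the affine cover of $X^{\uss}_{\chi}$ given by the basic opens $D(f)$, and then glue. By definition, $X^{\uss}_{\chi}=\bigcup D(f)$ with $f$ ranging over the even relative invariants $f\in(A^{G,\chi^n})_0$, $n\ge 1$. By Corollary~\ref{cor:4.3} and the decoupling hypothesis, such an $f$ is exactly a classical relative invariant in $A_0^{G_0,\chi_0^n}$. Hence $D(f)\subseteq\lvert X\rvert$ agrees with the classical basic open set, and $D_+(f)$ is a basic open of $X\sslash_\chi G$.

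On $D(f)$ the relevant superalgebra is the $\ZZ$-degree-zero part of the homogeneous localization,
\begin{equation*}
\Ocal_{X\sslash_\chi G}(D_+(f))=\Bigl(\bigl(\textstyle\bigoplus_{m\ge0}A^{G,\chi^m}\bigr)_f\Bigr)_{(0)}.
\end{equation*}
I will identify this with $(A_f)^G$. The multiplicative set $\{f^k\}$ is not invariant, so Lemma~\ref{lem:3.7} does not apply directly; instead I will run its kernel argument with the twisted map $a\mapsto \rho(a)-a\otimes\chi^{kn}$. An element $a/f^k\in A_f$ is $G$-invariant if and only if $\rho(a)=a\otimes\chi^{kn}$ after clearing denominators. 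This uses that $\rho(f)=f\otimes\chi^n$ with $\chi$ even and invertible, and that $f$ is a nonzerodivisor in $A_f$. Hence $(A_f)^G$ is exactly the degree-zero localization.

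The inclusion $(A_f)^G\hookrightarrow A_f$ then gives, by functoriality of $\sSpec$, a morphism $D(f)\to D_+(f)\cong\sSpec (A_f)^G$. This morphism is $G$-invariant by the same argument as item (1) of the preceding theorem. The local morphisms agree on overlaps $D(f)\cap D(g)=D(fg)$, because both are induced by the compatible inclusions into $A_{fg}$; so they glue to $\pi^{\uss}_\chi$. Commutativity of the diagram reduces to the observation that $A^G\to(A_f)^G\to A_f$ coincides with $A^G\to A\to A_f$.

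For the fiber statement I will pass to underlying spaces. There, everything is the classical map $D(f)\to\Spec (A_0)_f^{G_0}$ of King's construction: the even part of $(A_f)^G$ is $((A_0)_f)^{G_0}$ by Corollary~\ref{cor:3.5}, combined with Lemma~\ref{lem:3.7} applied after twisting by $f$. Points $x,y$ in the same fiber lie in a common $D(f)$, since $D_+(f)$ is determined by the image point. On the affine $G_0$-variety $D(f)$ with $G_0$ reductive, the classical affine quotient separates disjoint closed invariant sets; this is item (4) of the preceding theorem applied to $D(f)$. So $x,y$ have the same image if and only if $\overline{G_0x}\cap\overline{G_0y}\cap D(f)\neq\varnothing$.

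The remaining step is to match this with GIT equivalence, where closures are taken in $X^{\uss}_\chi$. This is the step I expect to be the main obstacle. The inclusion $D(f)\subseteq X^{\uss}_\chi$ gives one direction. For the converse, suppose the closures meet at $z\in X^{\uss}_\chi$. Then every invariant function on the ambient space takes equal values at $x$, $y$ and $z$, so $f(z)=f(x)\neq0$ and hence $z\in D(f)$. This requires the classical relative-invariant argument: $f$ has constant value along orbit closures up to the character, and vanishes at $z$ exactly when it vanishes at $x$. Throughout, the odd data play no role, because only even relative invariants enter the topology.
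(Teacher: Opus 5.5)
Your proposal follows essentially the same route as the paper. You build $\pi^{\uss}_{\chi}$ chart by chart from the inclusions $(A_f)^G\hookrightarrow A_f$, glue, get the square from the factorization $A^G\to(A_f)^G\to A_f$, and settle the fibres by classical affine GIT on a single chart $D(f)$. Your twisted-kernel identification of $\Ocal_{X {\sslash_{\chi}} G}(D_+(f))$ with $(A_f)^G$ is correct, and it supplies a step the paper only asserts. One small correction: the even part of $(A_f)^G$ is identified with $((A_0)_f)^{G_0}$ via Corollary~\ref{cor:4.3} applied to the weights $\chi^{kn}$, not Corollary~\ref{cor:3.5}, since decoupling is not a hypothesis on $A_f$.

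The one step that does not go through as written is the last.
\begin{itemize}
\item From ``every invariant function takes equal values at $x$, $y$ and $z$'' nothing follows about $f$, because $f$ is only a relative invariant.
\item The claim that $f$ vanishes at a point $z\in\overline{G_0\cdot x}$ exactly when it vanishes at $x$ is false in general. In Example~\ref{ex:4.15}, the relative invariant given by the coordinate function $x$ is nonzero at $(1,1)$ but vanishes at the origin, which lies in the closure of that orbit.
\item The claim holds here only because $z$ is semistable, and your argument never uses this.
\end{itemize}
The simplest repair is to choose the relative invariant after $z$. Since $z\in X^{\uss}_{\chi}$, take $f$ with $f(z)\neq 0$. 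Then $D(f)$ is an open $G_0$-stable neighbourhood of $z\in\overline{G_0\cdot x}\cap\overline{G_0\cdot y}$, so it meets both orbits and therefore contains $x$ and $y$. Your chart argument then finishes the proof. Alternatively, suppose $f(x)\neq 0$ with $f$ of weight $\chi^n$, and $g(z)\neq 0$ with $g$ of weight $\chi^m$. Then $x\in D(g)$, and the $G_0$-invariant function $f^m/g^n$ on $D(g)$ is constant on $\overline{G_0\cdot x}\cap D(g)$, which forces $f(z)\neq 0$. With this fix, your treatment of this direction is more careful than the paper's. The paper only argues that the sections $a/f^k$ are constant along orbit closures, without first placing $x$ and $y$ in a common chart.
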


\begin{proof}
We construct the morphism $\pi^{\uss}_{\chi}$ locally, using once again the finite generation established in Proposition~\ref{prop:4.4}, which guarantees the existence of a positive integer $N \geq 1$ such that the graded superalgebra $\bigoplus_{n \geq 0} A^{G,(\chi^{N})^{n}}$  is generated as an algebra over its degree-zero component $A^{G}$ by finitely many homogeneous elements of degree one, with respect to the $\ZZ_2$-grading. Let $f \in A_0^{G_0,\chi_0^N}$ be an even generator from this finite set. The corresponding basic open subset $D(f)$ is then an affine superscheme given by $D(f) = \sSpec A_f$. These affine open subsuperschemes collectively cover the set of $\chi$-semistable points $X_{\chi }^{\uss}$. Under the coaction $\rho$, the fact that $f$ is a relative invariant means that the localized superalgebra $A_{f}$ inherits a canonical coaction of $G$. The superalgebra of invariants $A_{f}^{G}$ defines the local coordinate superalgebra of the distinguished affine open subsuperschemes $D_+(f)$ that cover the GIT superquotient $X {\sslash_{\chi}} G$. The canonical inclusions of these superalgebras of invariants $A_f^G \hookrightarrow A_f$ induce local morphisms of affine superschemes from $D(f)$ to $D_+(f)$. By the universal properties of the $\sProj$ construction and the compatibility of localizations (cf.~\cite[\S2.3]{BruzzoHernandezRuiperezPolishchuk2023}), these local maps glue together uniquely to define the global $G$-invariant morphism of superschemes $\pi^{\uss}_{\chi} \colon X^{\uss}_{\chi} \to X {\sslash_{\chi}} G$. On each basic open subset $D(f)$, the restriction of the affine superquotient morphism $\pi$ is given by the composition of the structural map $A^G \to A$ with the canonical localization $A \to A_f$. Since the image of $A^{G}$ consists of invariant elements, this composition factors through the superalgebra of invariants via the map $A^G \to A_f^G$. This local algebraic factorization matches the restriction of the structural morphism $\varphi$ to $D(f)$, which guarantees that the diagram commutes globally.

We now turn to the second assertion of the proposition concerning the fibers of the morphism $\pi^{\uss}_{\chi}$. Let $x$ and $y$ be two $\chi$-semistable points in $X_{\chi }^{\uss}$. By definition, if these points are GIT equivalent, their orbit closures $\overline{G_{0}\cdot x}$ and $\overline{G_{0}\cdot y}$ intersect in $X_{\chi }^{\uss}$. Since the local sections of $\Ocal_{X {\sslash_{\chi}} G}$ are given over each distinguished open subset $D_+(f)$ by fractions of the form $a/f^k$ where $a \in A^{G,\chi^{kn}}$, the character weights under the action of the underlying affine algebraic group $G_{0}$ cancel out completely. Consequently, these local sections are constant along the closure of any $G_{0}$-orbit. Any such section evaluating to a value at $x$ must take the same value at any point in the intersection $\overline{G_{0}\cdot x} \cap \overline{G_{0}\cdot y}$, and hence must coincide with its value at $y$. Because their images cannot be separated by any local sections of $X {\sslash_{\chi}} G$, they must map to the exact same point under $\pi^{\uss}_{\chi}$, which implies they lie in the same fiber.

Conversely, suppose that $x$ and $y$ lie in the same fiber of $\pi^{\uss}_{\chi}$. This implies their images fall into a common distinguished affine open subset $D_+(f)$ of  $X {\sslash_{\chi}} G$. Consequently, $x$ and $y$ must both belong to the corresponding basic open subset $D(f)$ that covers $X_{\chi }^{\uss}$. Topologically, the underlying space of this basic open subset coincides with that of the classical affine scheme $\Spec (A_0)_f$, where the supergroup action reduces entirely to the classical action of the underlying affine algebraic group $G_{0}$. By the classical theory of reductive group actions on affine schemes, the closure of any $G_{0}$-orbit within this locus contains a unique closed $G_{0}$-orbit. Since $x$ and $y$ share the same evaluations under all local sections of $\Ocal_{X {\sslash_{\chi}} G}$, it follows that they cannot be separated by the ordinary local sections of $\Ocal_{\Spec A_0 {\sslash_{\chi_0}} G_{0}}$. According to classical affine GIT, this forces their orbit closures to contain the same unique closed $G_0$-orbit. The existence of this common closed orbit implies that the orbit closures $\overline{G_{0}\cdot x}$ and $\overline{G_{0}\cdot y}$ intersect inside $X_{\chi }^{\uss}$, meaning that they are, by definition, GIT equivalent. This completes the proof.
\end{proof}

We shall refer to the morphism $\pi^{\uss}_{\chi} \colon X^{\uss}_{\chi} \to X {\sslash_{\chi}} G$ from Proposition~\ref{prop:4.7} as the \emph{GIT superquotient morphism}. It should perhaps be noticed that when the group-like element $\chi$ is trivial, as is the case in the ordinary setting, $\chi$-semistability becomes a vacuous condition, meaning that the GIT superquotient $X {\sslash_{\chi}} G$ reduces precisely to the affine superquotient $X {\sslash} G$. Consequently, the GIT superquotient morphism $\pi^{\uss}_{\chi}$ coincides directly with the affine superquotient morphism $\pi$.

There is also a more geometric characterization of semistability and stability, formulated in terms of the coaction of $G$ on a polynomial extension of the coordinate superalgebra $A$ of $X$. This is the coaction-theoretic analogue of the lifting of the $G$-action to the ``total space of the line bundle'' associated with the group-like element $\chi$, as discussed in King's paper. Specifically, we consider the polynomial superalgebra $A[z]$, where $z$ is an even variable, equipped with the lifted coaction $\tilde{\rho} \colon A[z] \to A[z] \otimes_k k[G]$ defined by
\begin{equation}
\begin{aligned}
\tilde{\rho}(a)&=\rho (a), \\
\tilde{\rho}(z)&=z\otimes \chi ^{-1},
\end{aligned}
\end{equation}
and extended as a morphism of superalgebras. A point $x \in \lvert X \rvert$, corresponding to a prime ideal $\pfrak_x \subseteq A_0$, admits liftings $\tilde{x} \in \lvert \sSpec A[z] \rvert$. These liftings correspond to prime ideals $\tilde{\pfrak}_{\tilde{x}} \subseteq A_0[z]$ satisfying $\tilde{\pfrak}_{\tilde{x}} \cap A_0 = \pfrak_x$ and $z \notin \tilde{\pfrak}_{\tilde{x}}$; geometrically, these represent points in the ``total space away from the zero section''. By duality, the coaction $\tilde{\rho}$ induces an action of the underlying affine algebraic group $G_{0}$ on $\lvert \sSpec A[z] \rvert$. We denote the orbit of a lifting $\tilde{x}$ by $G_0 \cdot \tilde{x}$ and its closure by $\overline{G_{0}\cdot \tilde{x}}$. Within this framework, the ``zero section'' is given by the closed subset $V(z) \subseteq \lvert \sSpec A[z] \rvert$ defined as the vanishing locus of the variable $z$. We then have the following characterization.

\begin{lemma}\label{lem:4.8}
Let $x \in \lvert X \rvert$ and let $\tilde{x} \in \lvert \sSpec A[z] \rvert$ be a lifting of $x$. Then:
\begin{enumerate}
\item $x$ is $\chi$-semistable if and only if the orbit closure $\overline{G_0 \cdot \tilde{x}} \subseteq \lvert \sSpec A[z] \rvert$ is disjoint from $V(z)$. In particular, it is a necessary condition that $r_{\Delta}(\chi) = 1$ in the Hopf superalgebra $k[\Delta]$.

\item $x$ is $\chi$-stable if and only if its orbit $G_0 \cdot \tilde{x}$ is a closed subset of $\lvert \sSpec A[z] \rvert$ and, furthermore, the effective action of $G_0/\Delta_0$ on $\tilde{x}$ has a finite stabilizer.
\end{enumerate}
\end{lemma}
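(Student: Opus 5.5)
The plan is to reduce both assertions to King's classical Lemma 2.2 for the reduced action of $G_0$ on $\Spec A_0[z]$. Everything topological in the statement lives on $\lvert \sSpec A[z] \rvert = \Spec A_0[z]$, and the induced $G_0$-action there is the one dual to the reduced coaction $\tilde{\rho}_0 = (\id_{A[z]} \otimes r_0)\circ\tilde{\rho}$. On $A_0$ this is $\rho_0$, and $\tilde{\rho}_0(z)= z\otimes \chi_0^{-1}$. The first step is therefore to record that the $G_0$-invariants of $A_0[z]$ decompose by $z$-degree:
$$A_0[z]^{G_0} = \bigoplus_{n\ge 0} A_0^{G_0,\chi_0^n}\, z^n .$$
This holds because a monomial $a z^n$ is invariant precisely when $\rho_0(a)=a\otimes\chi_0^n$, and the $z$-grading is preserved by $\tilde{\rho}_0$. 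Combined with the odd-infinitesimal decoupling hypothesis, Corollary~\ref{cor:4.3} identifies each summand with $(A^{G,\chi^n})_0$. The relative invariants used to define $\chi$-semistability are thus exactly the coefficients of the $G_0$-invariant functions on the total space.

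For (1), I would argue as King does. If $f\in (A^{G,\chi^n})_0$ with $n\ge1$ satisfies $f(x)\neq 0$, then $F=fz^n$ is a $G_0$-invariant function on $\Spec A_0[z]$. It is constant on $\overline{G_0\cdot\tilde x}$, where it takes the nonzero value $F(\tilde x)$ (recall $z\notin\tilde{\pfrak}_{\tilde x}$), and it vanishes on $V(z)$. Hence the orbit closure misses $V(z)$.

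Conversely, suppose $\overline{G_0\cdot\tilde x}\cap V(z)=\varnothing$. Both sets are closed and $G_0$-invariant in an affine scheme with a reductive group action. The separation property (4) of the affine superquotient theorem, applied to the purely even coordinate algebra $A_0[z]$ (equivalently, classical Nagata separation), produces an invariant $F$ with $F\equiv 1$ on the orbit closure and $F\equiv 0$ on $V(z)$. Expanding $F=\sum_n f_n z^n$ with $f_n\in A_0^{G_0,\chi_0^n}$, the vanishing on $V(z)$ forces $f_0=0$. Since $F(\tilde x)\ne0$, some $f_n$ with $n\ge1$ satisfies $f_n(x)\ne0$, and Corollary~\ref{cor:4.3} places this $f_n$ in $(A^{G,\chi^n})_0$.

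The "in particular" clause I would extract from triviality of the $\Delta$-coaction. Since $\rho_\Delta$ is trivial, $\tilde{\rho}$ restricted to $\Delta$ acts on $z^n f$ through $r_\Delta(\chi)^{-n}$ alone. A nonzero semistable $f$ then gives $f\otimes r_\Delta(\chi)^n = f\otimes 1$ after applying $\id\otimes r_\Delta$ to $\rho(f)=f\otimes\chi^n$, because $\rho_\Delta(f)=f\otimes1$. This yields $r_\Delta(\chi)^n=1$. To upgrade this to $r_\Delta(\chi)=1$ I expect to need either an argument that group-likes in $1+(k[\Delta]_1)_0$ are torsion-free, or a reading of the statement modulo replacing $\chi$ by a power. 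This is one point I expect to be delicate, given the remark following Lemma~\ref{lem:4.5}.

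For (2), I would combine (1) with the classical dimension and closedness comparison. Assume first that $x$ is stable, with $f$ as in the definition. On the open subset $\{F\neq0\}$, which is $G_0$-stable and contains $\tilde x$, the map $\tilde x\mapsto x$ is $G_0$-equivariant. Its fibres are $\GG_m$-orbits twisted by $\chi_0$, and $\{F=c\}$ is a closed invariant subset mapping isomorphically to $D(f)$ up to the finite group of $n$-th roots of unity. Closedness of $G_0\cdot x$ in $D(f)$ then transfers to closedness of $G_0\cdot\tilde x$ in $\{F=F(\tilde x)\}$, hence in $\Spec A_0[z]$. The dimension identity $\dim G_0\cdot x=\dim G_0/\Delta_0$ translates into finiteness of the stabilizer of $\tilde x$ in $G_0/\Delta_0$, since the extra $z$-coordinate can only cut the stabilizer down by a finite amount given the constancy of $F$. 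For the converse, a closed orbit $G_0\cdot\tilde x$ is disjoint from $V(z)$ because $z\ne0$ on it, so $x$ is semistable by (1), and the same fibre comparison reverses each step.

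The main obstacle I anticipate is this transfer between orbits upstairs and downstairs, in particular keeping track of the finite ambiguity coming from $\chi_0$ on the stabilizer. I would handle it exactly as in King's proof, which is legitimate here because all the data involved are classical, by Corollary~\ref{cor:4.3}.
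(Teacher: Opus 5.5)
Your route is the paper's: the $z$-degree decomposition of $A_0[z]^{G_0}$, the invariant $fz^n$ for the forward half of (1), a $G_0$-invariant separating $V(z)$ from $\overline{G_0\cdot\tilde x}$ for the converse, and an orbit/stabilizer comparison under the projection for (2). Where the two arguments differ, yours is the sounder one.

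\textbf{Forward half of (1).} You use that $fz^n$ is constant on the orbit closure. The paper only uses that $D(fz^n)$ is $G_0$-stable, which does not make it contain the closure.

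\textbf{Converse of (1).} You check that the extracted coefficient has $n\ge 1$, which the paper omits. (Strictly, $f_0$ is only nilpotent rather than zero, which still gives $f_0(x)=0$.)

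\textbf{Part (2).} The paper asserts that $\mathrm{Stab}(x)=\Delta_0$ and that $G_0\cdot\tilde x\to G_0\cdot x$ is bijective. Stability only gives $\dim\mathrm{Stab}(x)=\dim\Delta_0$. Moreover, $g\in\mathrm{Stab}(x)$ moves $\tilde x$ along its fibre by $\chi_0(g)^{-1}$, which is an $n$-th root of unity because $F$ is invariant. Already $\GG_m$ acting on $\AA^1$ with weight $2$, $\Delta$ trivial and $\chi=t$, gives a $2{:}1$ map. Your finite map $\{F=c\}\to D(f)$ is exactly what is needed in both directions. In the write-up, say explicitly that the preimage of $G_0\cdot x$ in $\{F=c\}$ is a finite union of equidimensional $G_0$-orbits (the $\mu_n$-translates of $G_0\cdot\tilde x$). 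Each of these is then closed, because orbit boundaries consist of lower-dimensional orbits.

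\textbf{The ``in particular'' clause.} You were right to stop here: the clause is false. The paper's step ``for this to hold independently of the choice of $n$, we need $r_\Delta(\chi)=1$'' is a non sequitur. Take $G=\GG_m$, $A=k[x]$ with $\rho(x)=x\otimes t^2$, $\Delta=\mu_2$ (so $k[\Delta]=k[t]/(t^2-1)$), and $\chi=t$. Then $\Delta$ acts trivially and $x\in A^{G,\chi^2}$, so every point with $x\neq 0$ is $\chi$-semistable, yet $r_\Delta(\chi)=\bar t\neq 1$.

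Your first proposed fix cannot work. In characteristic zero the units in $1+(k[\Delta]_1)_0$ are indeed torsion-free, but $r_\Delta(\chi)$ lies there only if $\chi_0\vert_{\Delta_0}=1$, and that is exactly where the torsion sits. What actually holds is $r_\Delta(\chi)^n=1$ for some $n\ge 1$. Your second option, replacing $\chi$ by $\chi^n$ (which changes neither $\chi$-semistability nor $\chi$-stability), is the correct repair. It is also needed in (2): otherwise $\Delta_0$ acts nontrivially on the $z$-line, and ``the effective action of $G_0/\Delta_0$ on $\tilde x$'' makes no sense.

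\textbf{A caution shared with the paper.} The only genuinely super step in your argument is the identification $A_0^{G_0,\chi_0^n}=(A^{G,\chi^n})_0$ via Corollary~\ref{cor:4.3}. In the paper's proof the same step is hidden in ``we can choose $\tilde f$ to be invariant under $\tilde\rho$''. Decoupling as defined only constrains weight zero. What both arguments actually need is that $\gfrak_1$ kills $A_0^{G_0,\chi_0^n}$ for every $n\ge 0$, equivalently that the lifted coaction $\tilde\rho$ on $A[z]$ is itself odd-infinitesimally decoupled.

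Without this, the converse of (1) fails. Let $\GG_m^{1\vert 1}$ act on $k[x,\theta]$ by $\rho(x)=x\otimes t+\theta\otimes\xi$ and $\rho(\theta)=\theta\otimes t$, with $\chi=t$.
\begin{itemize}
\item $A_0^{G_0}=k$, so $\rho$ is decoupled.
\item $\rho(x^n)=x^n\otimes t^n+nx^{n-1}\theta\otimes t^{n-1}\xi$, so in characteristic zero $(A^{G,\chi^n})_0=0$ for all $n\ge 1$, and no point is $\chi$-semistable.
\item Yet the point $1\in\AA^1$ has the lift $(1,1)$, whose orbit $\{xz=1\}$ is closed and misses $V(z)$.
\end{itemize}
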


\begin{proof}
Observe first that if an element $a \in A$ satisfies $\rho(a) = a \otimes \chi^{n}$, then the element $az^{n} \in A[z]$ satisfies $\tilde{\rho}(az^{n}) = az^{n} \otimes 1$. This means that relative invariant elements in $A$ of weight $\chi^{n}$, give rise to invariant elements in $A[z]$.

We next establish the necessary condition stated above. Let $x \in \lvert X \rvert$ be a $\chi$-semistable point and take a relative invariant $f \in (A^{G,\chi^n})_0$ such that $f(x) \neq 0$. Since $f$ has weight $\chi^{n}$, applying the induced coaction $\rho_{\Delta} = (\id_A \otimes r_{\Delta}) \circ \rho$, which is trivial on $A$, yields the identity
$$
f \otimes 1 = \rho_{\Delta}(f) =  (\id_A \otimes r_{\Delta})(\rho(f)) = f \otimes r_{\Delta}(\chi)^n,
$$
which holds in the tensor product $A_0 \otimes_k k[\Delta]$. As $f$ is non-trivial, this equality implies that $r_{\Delta}(\chi)^n = 1$ in the Hopf superalgebra $k[\Delta]$. For this to hold independently of the choice of $n$, we need $r_{\Delta}(\chi) = 1$, as asserted. 

To prove the forward implication of (1), assume that $x \in \lvert X \rvert$ is $\chi$-semistable and let $f \in (A^{G,\chi^n})_0$ be as before, with $f(x) \neq 0$. By our first observation, $f z^n \in A_0[z]$ is an invariant element under $\tilde{\rho}$, which implies that the basic open subset $D(f z^n)$ of $\lvert \sSpec A[z] \rvert$ is $G_0$-invariant. Since $z \notin \tilde{\pfrak}_{\tilde{x}}$ by definition and $f \notin \pfrak_x$, it follows that $f z^n \notin \tilde{\pfrak}_{\tilde{x}}$, so $D(f z^n)$ contains $\tilde{x}$. In light of its $G_0$-invariance, $D(f z^n)$ must also contain the entire orbit $G_0 \cdot \tilde{x}$ and its closure $\overline{G_0 \cdot \tilde{x}}$. Finally, since $D(f z^n) \subseteq D(z)$, the orbit closure $\overline{G_{0}\cdot \tilde{x}}$ is disjoint from $V(z)$.

For the reverse implication of (1), assume that $\overline{G_0 \cdot \tilde{x}} \cap V(z) = \varnothing$. By the reductivity of $G_0$, there exists a $G_0$-invariant principal open subset $D(\tilde{f}) \subseteq D(z)$ containing $\tilde{x}$. We can choose its homogeneous generator $\tilde{f} \in A_0[z]$ to be an invariant element under $\tilde{\rho}$, so that $\tilde{\rho}(\tilde{f}) = \tilde{f} \otimes 1$. Writing $\tilde{f} = \sum_i f_i z^i$, the relation $\sum_i \rho(f_i) \otimes z^i \chi^{-i} = \sum_i f_i z^i \otimes 1$, together with the linear independence of the powers of $z$, implies that each coefficient $f_i \in A_0$ satisfies $\rho(f_i) = f_i \otimes \chi^i$. Since $\tilde{f} \notin \tilde{\pfrak}_{\tilde{x}}$, the polynomial structure implies that at least one coefficient satisfies $f_i(x) \neq 0$. Restricting the coaction to the underlying classical affine group $G_0$ yields $\rho_0(f_i) = f_i \otimes \chi_0^i$. The condition $r_{\Delta}(\chi) = 1$ ensures, via Lemma~\ref{lem:4.5}, that $\chi_0(\Delta_0) = \{ 1 \}$, so this weight assignment satisfies King's semistability criterion for $G_0$. Consequently, $f_i$ is an ordinary relative invariant of weight $\chi_0^i$, which proves the $\chi$-semistability of $x$.

To prove the forward implication of (2), assume that $x \in \lvert X \rvert$ is $\chi$-stable. By definition, there exists a relative invariant $f \in (A^{G,\chi^n})_0$ with $f(x) \neq 0$ such that the orbit $G_0 \cdot x$ is a closed subset of the basic open subset $D(f)$ that satisfies $\dim G_0 \cdot x = \dim G_0/\Delta_0$. Let us consider the canonical projection $\lvert \sSpec A[z] \rvert \to \lvert X \rvert$. This projection is $G_0$-equivariant and maps the orbit $G_0 \cdot \tilde{x}$ onto the orbit $G_0 \cdot x$. Since the stabilizer of $x$ is $\Delta_0$, and since $\chi_0$ is trivial on $\Delta_0$, as noted above, thus acting trivially on the variable $z$, any two elements in $G_0 \cdot \tilde{x}$ that project to the same point in the base $\lvert X \rvert$ must coincide. It follows that the canonical projection $\lvert \sSpec A[z] \rvert \to \lvert X \rvert$ restricts to an injective mapping between these orbits, implying that these orbits are in bijective correspondence. This bijectivity ensures that the lifted orbit $G_0 \cdot \tilde{x}$ is a closed subset of $\lvert \sSpec A[z] \rvert$ and preserves the relation $\dim G_0 \cdot \tilde{x} =  \dim G_0/\Delta_0$. At the same time, by the foregoing, the action of $G_0$ on $\lvert \sSpec A[z] \rvert$ factors through the quotient group $G_0/\Delta_0$. Applying the orbit dimension formula to this effective action yields the relation
$$
\dim G_0 \cdot \tilde{x} = \dim G_0/\Delta_0 - \dim (G_0/\Delta_0)_{\tilde{x}},
$$
where $(G_0/\Delta_0)_{\tilde{x}}$ denotes the stabilizer subgroup of $\tilde{x}$ under $G_0/\Delta_0$. Comparing both expressions for $\dim G_0 \cdot \tilde{x}$ shows that $\dim (G_0/\Delta_0)_{\tilde{x}} = 0$. Since this stabilizer is a closed subscheme of finite type, having dimension zero implies that it is finite.

For the reverse implication of (2), assume that the orbit $G_0 \cdot \tilde{x}$ is a closed subset of $\lvert \sSpec A[z]\rvert$ and that the stabilizer subgroup $(G_0/\Delta_0)_{\tilde{x}}$ is finite. The finiteness of the stabilizer implies that $\dim (G_0/\Delta_0)_{\tilde{x}} = 0$. Substituting this dimension into the orbit dimension formula yields $\dim G_0 \cdot \tilde{x} = \dim G_0/\Delta_0$. Since the canonical projection $\lvert \sSpec A[z]\rvert \to \lvert X\rvert$ preserves orbit dimensions as stated above, we obtain the dimension relation $\dim G_0 \cdot x = \dim G_0/\Delta_0$. Next, observe that since $z \notin \tilde{\pfrak}_{\tilde{x}}$, the point $\tilde{x}$ lies outside the $G_0$-invariant vanishing locus $V(z)$, which means that the entire orbit $G_0 \cdot \tilde{x}$ is disjoint from $V(z)$. Since this orbit is closed by hypothesis, its closure $\overline{G_0 \cdot \tilde{x}}$ remains disjoint from $V(z)$. By the reverse implication of (1), the point $x$ is $\chi$-semistable, which guarantees the existence of a relative invariant $f \in (A^{G,\chi^n})_0$ such that $f(x) \neq 0$. Considering the element $f z^n$ as in our first observation, the basic open subset $D(f z^n)$ is $G_0$-invariant and contains the entire orbit $G_0 \cdot \tilde{x}$. On this subset, the canonical projection restricts to a morphism $D(f z^n) \to D(f)$, under which the image of the closed orbit $G_0 \cdot \tilde{x}$ is the orbit $G_0 \cdot x$, itself a closed subset of the basic open subset $D(f)$. Consequently, the point $x$ is $\chi$-stable.
\end{proof}

In close analogy to King's construction, the GIT equivalence relation on $X_{\chi}^{\uss}$ can be characterized via an equivalence relation on $\lvert \sSpec A[z] \rvert$. Namely, $x \sim y$ if and only if there exist liftings $\tilde{x}$ and $\tilde{y}$ such that the orbit closures $\overline{G_0 \cdot \tilde{x}}$ and $\overline{G_0 \cdot \tilde{y}}$ intersect in $\lvert \sSpec A[z] \rvert$. This equivalence follows immediately from the bijective correspondence between these orbits established in the proof of Lemma~\ref{lem:4.8}.

Before proceeding further, we need to introduce some notation. If one is given a one-parameter subgroup $\lambda^{*} \colon k[G] \to k[\GG_m]$, one obtains the superalgebra morphisms
\begin{equation}
\begin{gathered} 
  \rho_{\lambda} = (\id_{A} \otimes \lambda^{*}) \circ \rho \colon A \to A \otimes_k k[\GG_m], \\     
   \tilde{\rho}_{\lambda} = (\id_{A[z]} \otimes \lambda^{*}) \circ \tilde{\rho} \colon A[z] \to A[z] \otimes_k k[\GG_m], 
  \end{gathered}
\end{equation}
which determine coactions of the multiplicative group $\GG_{m}$ on the coordinate rings $A$ and $A[z]$, respectively. Recalling that $k[\GG_m] = k[t,t^{-1}]$, for each $a \in k^\times$, the evaluation homomorphism $\ev_a \colon k[t,t^{-1}] \to k$ sending $t$ to $a$, further induces the invertible superalgebra morphisms
\begin{equation}
\begin{gathered}
    \sigma_{\lambda,a} = (\id_{A} \otimes \ev_a) \circ \rho_{\lambda} \colon A \to A, \\
    \tilde{\sigma}_{\lambda,a} = (\id_{A[z]} \otimes \ev_a) \circ \tilde{\rho}_{\lambda} \colon A[z] \to A[z]. 
\end{gathered}
\end{equation}
These superalgebra morphisms, in turn, yield continuous, invertible maps $\sigma_{\lambda,a}^*\colon \lvert X \rvert \to \lvert X \rvert$ and $ \tilde{\sigma}_{\lambda,a}^* \colon \lvert \sSpec A[z] \rvert \to \lvert \sSpec A[z] \rvert$, as discussed in \S\ref{sec:2.2}.\footnote{To lighten the notation, here we drop the bars when writing the induced maps on underlying topological spaces.} Therefore, given a point $x \in \lvert X \rvert$ and a lifting $\tilde{x} \in \lvert \sSpec A[z] \rvert$, one obtains the points $ \sigma_{\lambda,a}^*(x) \in \lvert X \rvert$ and $\tilde{\sigma}_{\lambda,a}^*(\tilde{x}) \in \lvert \sSpec A[z] \rvert$. The prime ideal corresponding to $\sigma_{\lambda,a}^*(x)$ is $\sigma_{\lambda,a}^{-1}(\mathfrak{p}_x)$, whereas the one corresponding to $\tilde{\sigma}_{\lambda,a}^*(\tilde{x})$ is $\tilde{\sigma}_{\lambda,a}^{-1}(\tilde{\mathfrak{p}}_{\tilde{x}})$. In this context, the following assertion holds.

\begin{lemma}\label{lem:4.9}
For each $a \in k^{\times}$, the points $\sigma_{\lambda,a}^*(x)$ and $\tilde{\sigma}_{\lambda,a}^*(\tilde{x})$ belong to the orbits $G_0 \cdot x$ and $G_0 \cdot \tilde{x}$, respectively.
\end{lemma}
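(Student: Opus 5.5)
The plan is to show that the continuous map $\sigma_{\lambda,a}^*$ on $\lvert X \rvert$ is exactly the map induced by a single $k$-point of $G_0$, namely $g_a = \lambda_0(a)$, where $\lambda_0$ is the ordinary one-parameter subgroup of $G_0$ corresponding to $\lambda^*$ (see \S\ref{sec:2.4}). Everything reduces to a factorization of the morphism that defines $\sigma_{\lambda,a}$.

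First, recall from \S\ref{sec:2.4} that $\lambda^*$ kills $k[G]_1$ and therefore factors uniquely as $\lambda^* = \lambda_0^* \circ r_0$, with $\lambda_0^* \colon k[G_0] \to k[\GG_m]$ a Hopf algebra morphism. Consequently
\[
\sigma_{\lambda,a} = (\id_A \otimes \ev_a \circ \lambda_0^*) \circ (\id_A \otimes r_0) \circ \rho = (\id_A \otimes \xi_a) \circ \rho_0 ,
\]
where $\xi_a = \ev_a \circ \lambda_0^* \colon k[G_0] \to k$ is a superalgebra morphism, i.e.\ a $k$-point $g_a = \lambda_0(a) \in G_0(k)$. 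Equivalently, $\sigma_{\lambda,a} = (\id_A \otimes \xi_a \circ r_0)\circ\rho$ is the endomorphism $\sigma_{\xi}$ attached in \S\ref{sec:3.1} to the point $\xi = \xi_a \circ r_0$ of $G$. Thus $\sigma_{\lambda,a}$ is the comorphism of the translation by $g_a$ in the action of $G_0$ on $\lvert X \rvert$, which is how the action of $G_0$ on the underlying space was defined via $\rho_0$.

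On prime ideals, this gives $\sigma^*_{\lambda,a}(x) = \sigma_{\lambda,a}^{-1}(\pfrak_x)$, which is by definition the prime of the translated point $g_a \cdot x$ (or $g_a^{-1}\cdot x$, depending on the left/right convention for the induced action). Either way the point lies in $G_0 \cdot x$, since $G_0(k)$ is a group and $g_a^{-1} = \lambda_0(a^{-1})$. The lifted statement is identical: $\tilde\rho$ is a coaction of $k[G]$ on $A[z]$, and the same factorization gives $\tilde\sigma_{\lambda,a} = (\id_{A[z]} \otimes \xi_a) \circ \tilde\rho_0$, where $\tilde\rho_0 = (\id \otimes r_0)\circ\tilde\rho$ is the coaction defining the $G_0$-action on $\lvert \sSpec A[z] \rvert$. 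Hence $\tilde\sigma^*_{\lambda,a}(\tilde x) \in G_0 \cdot \tilde x$. I should also note, for consistency, that $\tilde\sigma_{\lambda,a}(z) = \ev_a(\lambda^*(\chi^{-1})) z$ is a nonzero scalar multiple of $z$, so liftings away from $V(z)$ are preserved.

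The main obstacle is bookkeeping rather than substance. I need to make precise that the orbit $G_0\cdot x$ of a possibly non-closed scheme point is meant as the set of images of $x$ under the maps induced by $\sigma_\xi$, for $k$-points $\xi$ of $G_0$. This is consistent with how the action of $G_0$ on $\lvert X\rvert$ is used earlier, so I will state it explicitly. I must also check that the convolution/inverse conventions do not matter, and they do not because the orbit is closed under inverses. The one genuinely needed fact is that $\xi \mapsto \sigma_\xi$ is compatible with $r_0$, which is immediate from the definition $\rho_0 = (\id_A\otimes r_0)\circ\rho$.
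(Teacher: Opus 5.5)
Your proposal is correct and follows the paper's proof essentially step for step. The paper also factors $\ev_a\circ\lambda^*$ through $r_0$ to obtain a $k$-point $\bar\xi_{\lambda,a}$ of $G_0$, rewrites $\sigma_{\lambda,a}=(\id_A\otimes\bar\xi_{\lambda,a})\circ\rho_0$, concludes $\sigma_{\lambda,a}^*(x)\in G_0\cdot x$, and treats the lift $\tilde x$ in the same way; your remarks on the left/right convention and on the meaning of the orbit of a non-closed point only make explicit assumptions the paper leaves implicit.
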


\begin{proof}
It suffices to verify the assertion for $\sigma_{\lambda,a}^*(x)$, as the argument for the lifting $\tilde{\sigma}_{\lambda,a}^*(\tilde{x})$ proceeds identically. Substituting the expression for the induced coaction $\rho_{\lambda}$ into $\sigma_{\lambda,a}$ yields
$$
\sigma _{\lambda ,a} =  (\id_{A} \otimes (\ev_a \circ \lambda^{*})) \circ \rho.
$$
Thus, $\sigma_{\lambda,a}$ is obtained by composing the original coaction $\rho$ with the base extension of the superalgebra homomorphism $\xi_{\lambda,a} = \ev_a \circ \lambda^{*} \colon k[G] \to k$. As pointed out in \S\ref{sec:3.1}, this map determines a $k$-point of the algebraic supergroup $G$. Since $\xi_{\lambda,a}$ factors through the canonical restriction $r_0 \colon k[G] \to k[G_0]$ onto the coordinate ring of the underlying affine algebraic group $G_0$, it defines an algebra map $\bar{\xi}_{\lambda,a} \colon k[G_0] \to k$, corresponding to a $k$-point of $G_0$. This factorization yields the identity $\sigma_{\lambda,a} = (\id_A \otimes \bar{\xi}_{\lambda,a}) \circ \rho_0$, where $\rho_0$ denotes the standard coaction of $G_0$. Consequently, the endomorphism $\sigma_{\lambda,a}$ coincides with the action of $G_0$ on $A$ associated with this $k$-point, which by definition implies that $\sigma_{\lambda,a}^*(x) \in G_0 \cdot x$.
\end{proof}

One may interpret this result as saying that, as $a$ ranges over $k^{\times}$, the points $\sigma_{\lambda,a}^*(x)$ and $\tilde{\sigma}_{\lambda,a}^*(\tilde{x})$ trace out the movement along the respective orbits under the one-parameter subgroup. In this sense, they give precise meaning to the otherwise ill-defined expressions ``$\lambda(a) \cdot x$'' and ``$\lambda(a) \cdot \tilde{x}$''.  

We now come to the key definition. By a \emph{polynomial coextension} of the coaction $\rho_{\lambda} \colon A \to A \otimes_k k[\mathbb{G}_m]$ induced by $\lambda^{*} \colon k[G] \to k[\mathbb{G}_m]$, we mean a morphism of superalgebras $\hat{\rho}_{\lambda} \colon A \to A \otimes_k k[\mathbb{A}^{1}]$ making the following diagram commute
\begin{equation}
\begin{tikzcd}[row sep=3.5em,  
column sep=4.0em,
  every label/.append style={font=\normalsize}
]
  & A \otimes_k  k[\AA^{1}]  \arrow[d, -{To[length=2.5pt, width=4pt]}] \\
A \arrow[r, -{To[length=2.5pt, width=4pt]}, "\rho_{\lambda}"'] \arrow[ru, -{To[length=2.5pt, width=4pt]}, "\hat{\rho}_{\lambda}"]& A \otimes_k  k[\GG_m] 
\end{tikzcd}
\end{equation}
where the vertical arrow on the right is the morphism of superalgebras induced by the natural inclusion $k[\mathbb{A}^{1}] \hookrightarrow k[\mathbb{G}_m]$. Similarly, by a polynomial coextension of the lifted coaction $\tilde{\rho}_{\lambda} \colon A[z] \to A[z] \otimes_k k[\mathbb{G}_m]$, we mean a morphism of superalgebras $\hat{\tilde{\rho}}_{\lambda} \colon A[z] \to A[z] \otimes_k k[\mathbb{A}^{1}]$ making an analogous diagram commute. Crucially, the existence of a polynomial coextension for $\rho_{\lambda}$ does not automatically ensure the existence of one for $\tilde{\rho}_{\lambda}$; the precise condition for this extension to hold will be established shortly. Nevertheless, to set up the necessary notation, let us temporarily suppose that both polynomial coextensions exist. This way, writing $k[\mathbb{A}^1] = k[t]$, for each $a \in k$ the evaluation homomorphism $\ev_a \colon k[t] \to k$ setting $t$ to $a$ induces the superalgebra morphisms
\begin{equation}\label{eq:4.14}
\begin{gathered}
    \hat{\sigma}_{\lambda,a} = (\id_{A} \otimes \ev_a) \circ \hat{\rho}_{\lambda} \colon A \to A, \\
    \hat{\tilde{\sigma}}_{\lambda,a} = (\id_{A[z]} \otimes \ev_a) \circ \hat{\tilde{\rho}}_{\lambda} \colon A[z] \to A[z]. 
\end{gathered}
\end{equation}
From these superalgebra endomorphisms, one then obtains the continuous maps $\hat{\sigma}_{\lambda,a}^*\colon \lvert X \rvert \to \lvert X \rvert$ and $\hat{\tilde{\sigma}}_{\lambda,a}^* \colon \lvert \sSpec A[z] \rvert \to \lvert \sSpec A[z] \rvert$. With all this understanding, given a point $x \in \lvert X \rvert$ and a lifting $\tilde{x} \in \lvert \sSpec A[z] \rvert$, one defines the algebraic limits
\begin{equation}
\begin{aligned}
\lim _{a\rightarrow 0}\sigma _{\lambda ,a}^{*}(x)&=\hat{\sigma} _{\lambda,0}^{*}(x),\\
 \lim _{a\rightarrow 0}\tilde{\sigma }_{\lambda ,a}^{*}(\tilde{x})&= \hat{\tilde{\sigma }}_{\lambda ,0}^{*}(\tilde{x}).
\end{aligned}
\end{equation}
These definitions serve to give precise mathematical meaning to what are a priori purely formal expressions, ``$\displaystyle \lim_{a \to 0} \lambda(a) \cdot x$'' and ``$\displaystyle \lim_{a \to 0} \lambda(a) \cdot \tilde{x}$''. In this regard, the following proposition is key to our development.

\begin{lemma}\label{lem:4.10}
Let $\tilde{x} \in \lvert \sSpec A[z] \rvert$ be a lift of $x \in \lvert X \rvert$, as above. Then:
\begin{enumerate}
\item $x$ is $\chi$-semistable if and only if, for all one-parameter subgroups $\lambda^{*} \colon k[G] \to k[\GG_m]$ for which the induced lifted coaction $\tilde{\rho}_{\lambda}$ admits a polynomial coextension, one has $\displaystyle \lim _{a\rightarrow 0}\tilde{\sigma }_{\lambda ,a}^{*}(\tilde{x})\notin V(z)$.

\item $x$ is $\chi$-stable if and only if, for all one-parameter subgroups $\lambda^{*} \colon k[G] \to k[\GG_m]$ for which the induced lifted coaction $\tilde{\rho}_{\lambda}$ admits a polynomial coextension, one has $\displaystyle \lim_{a \to 0} \tilde{\sigma}_{\lambda,a}^*(\tilde{x}) \in G_0 \cdot \tilde{x}$, where the equality $\displaystyle \lim_{a \to 0} \tilde{\sigma}_{\lambda,a}^*(\tilde{x}) = \tilde{x}$ occurs only when $\lambda ^{*}$ factors through $k[G_0/\Delta_0]$.
\end{enumerate}
\end{lemma}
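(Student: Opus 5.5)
The plan is to reduce both assertions to Lemma~\ref{lem:4.8} and then to the classical Hilbert--Mumford criterion for the reductive group $G_0$ acting on the affine scheme $\Spec A_0[z]$. Three facts make this reduction possible.

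\begin{itemize}
\item[(a)] By Lemma~\ref{lem:4.8}, semistability and stability of $x$ are already phrased purely in terms of the $G_0$-orbit $G_0\cdot\tilde{x}$, its closure, and its stabilizer inside $\lvert\sSpec A[z]\rvert = \Spec A_0[z]$.
\item[(b)] By \S\ref{sec:2.4}, one-parameter subgroups $\lambda^*$ of $G$ correspond bijectively to ordinary one-parameter subgroups $\lambda_0$ of $G_0$. By Lemma~\ref{lem:4.9}, the maps $\tilde{\sigma}^*_{\lambda,a}$ are exactly the action of $\lambda_0(a)$ on $\Spec A_0[z]$.
\item[(c)] Restricted to even parts, a polynomial coextension $\hat{\tilde{\rho}}_\lambda$ is the same thing as the statement that the classical orbit map $\GG_m\to\Spec A_0[z]$, $a\mapsto\lambda_0(a)\cdot\tilde{x}$, extends to $\AA^1$ (for every $\tilde{x}$). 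In that case $\hat{\tilde{\sigma}}^*_{\lambda,0}(\tilde{x})$ is the classical limit $\lim_{a\to 0}\lambda_0(a)\cdot\tilde{x}$.
\end{itemize}

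To establish (c), expand $\tilde{\rho}_\lambda(b)=\sum_n b_n\otimes t^n$ into weight components. A coextension exists precisely when only $n\ge 0$ occurs. Since $\ev_0$ kills $t^n$ for $n>0$, the limit is cut out by the weight-zero projection. The odd part plays no role, because $\lambda^*$ kills $k[G]_1$ and the topology only sees $A_0[z]$.

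For (1), the forward direction runs as follows. If $x$ is $\chi$-semistable, then by Lemma~\ref{lem:4.8}(1) the closure $\overline{G_0\cdot\tilde{x}}$ misses $V(z)$. Any existing limit $\hat{\tilde{\sigma}}^*_{\lambda,0}(\tilde{x})$ lies in the closure of $\{\tilde{\sigma}^*_{\lambda,a}(\tilde{x})\}_{a\in k^\times}\subseteq G_0\cdot\tilde{x}$, by Lemma~\ref{lem:4.9} and continuity of the coextended map. Hence the limit is not in $V(z)$.

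For the converse, suppose $\overline{G_0\cdot\tilde{x}}\cap V(z)\neq\varnothing$. The set $V(z)$ is closed and $G_0$-invariant, since $\tilde{\rho}(z)=z\otimes\chi^{-1}$. So the unique closed orbit $Z$ in $\overline{G_0\cdot\tilde{x}}$ lies in $V(z)$. The classical Hilbert--Mumford theorem (Kempf's form, for affine $G_0$-schemes) then gives $\lambda_0$ with $\lim\lambda_0(a)\cdot\tilde{x}\in Z$. It remains to check that the corresponding $\lambda^*$ gives a coextendable $\tilde{\rho}_\lambda$. This is the main obstacle: the limit existing at the single point $\tilde{x}$ does not obviously give a coalgebra-level coextension on all of $A[z]$, since negative weights may occur in $A$ away from $\tilde{x}$. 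I would try to handle this by passing to the $G_0$-stable closed subscheme $\overline{G_0\cdot\tilde{x}}$, or to the quotient of $A[z]$ by the ideal of $\tilde{x}$'s orbit closure. On that quotient the orbit map extends, so only nonnegative weights survive. One then interprets the hypothesis "admits a polynomial coextension" relative to this restriction, or else argues that the weights appearing nontrivially at $\tilde{x}$ are the only ones relevant to evaluating $\hat{\tilde{\sigma}}^*_{\lambda,0}(\tilde{x})$.

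For (2), I would use Lemma~\ref{lem:4.8}(2): $x$ is stable iff $G_0\cdot\tilde{x}$ is closed and $(G_0/\Delta_0)_{\tilde{x}}$ is finite.
\begin{itemize}
\item If the orbit is closed, every existing limit lies in $\overline{G_0\cdot\tilde{x}}=G_0\cdot\tilde{x}$.
\item If the limit equals $\tilde{x}$, a weight argument shows $\lambda_0$ fixes $\tilde{x}$: a point fixed in the limit has only weight-zero coordinates, so it is fixed for all $a$. Thus $\lambda_0$ lands in the stabilizer. Finiteness of the stabilizer modulo $\Delta_0$ then forces $\lambda_0$ to factor through $\Delta_0$, which is the stated condition that $\lambda^*$ factors through the quotient $k[G_0/\Delta_0]$.
\item Conversely, if the orbit is not closed, Hilbert--Mumford gives a $\lambda_0$ whose limit lies in a smaller orbit, contradicting the hypothesis.
\item If the stabilizer modulo $\Delta_0$ is infinite, it is an affine group of positive dimension fixing $\tilde{x}$. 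Because $G_0$ is reductive and the orbit is closed, Matsushima's criterion makes the stabilizer reductive, so it contains a nontrivial torus. This yields a $\lambda_0$ with limit $\tilde{x}$ that does not factor through $\Delta_0$; its coextension exists trivially on the orbit, subject to the same caveat as in (1).
\end{itemize}
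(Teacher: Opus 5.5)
You follow the same reduction as the paper: Lemma~\ref{lem:4.8} combined with the classical Hilbert--Mumford criterion for $G_0$ acting on $\Spec A_0[z]$, transported along the bijection between one-parameter subgroups of $G$ and of $G_0$. The step you flag as the main obstacle is a genuine gap, and it cannot be closed. With polynomial coextensions defined globally on $A[z]$, as in the paper, the ``if'' directions of both (1) and (2) are false.

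Here is a counterexample. Take $G=\GG_m$ with $k[G]=k[t,t^{-1}]$, so $\gfrak_1=0$ and decoupling is automatic. Let $A=k[x_1,x_2]$ with $\rho(x_1)=x_1\otimes t$ and $\rho(x_2)=x_2\otimes t^{-1}$, let $\Delta$ be trivial, and set $\chi=t$. For $\lambda^*(t)=t^m$ one has $\tilde\rho_\lambda(x_1)=x_1\otimes t^{m}$ and $\tilde\rho_\lambda(x_2)=x_2\otimes t^{-m}$. Since $A[z]\otimes_k k[t]\to A[z]\otimes_k k[t,t^{-1}]$ is injective, a polynomial coextension exists only for $m=0$. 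In that case $\lambda^*$ is trivial and the limit is $\tilde x\notin V(z)$. Hence the right-hand sides of (1) and (2) hold at every point. However, the point $x$ with $x_1=0$, $x_2=1$ is not $\chi$-semistable: every relative invariant of weight $\chi^n$, $n\ge 1$, is a combination of the monomials $x_1^{j+n}x_2^{j}$, all of which vanish at $x$.

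The destabilizing subgroups ($m<0$) do have a limit at $\tilde x$, since $x_1$ vanishes there. What obstructs the coextension is the negative weight of $x_1$ away from $\tilde x$, which is exactly the phenomenon you describe. The paper's proof passes over this step by asserting that the $\lambda^*$ induced by the Hilbert--Mumford subgroup $\lambda_0^*$ ``naturally carries a polynomial coextension by virtue of its definition.'' The example shows this assertion is false.

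Consequently your suggested repairs do not prove the lemma as stated; they prove a variant with a weaker, pointwise hypothesis. Even for that variant, restricting to $\overline{G_0\cdot\tilde x}$ does not force nonnegative weights. For instance, take $\mathrm{SL}_2$ acting on $k^2$, with $\tilde x$ lying over $e_1$ and $\lambda_0=\mathrm{diag}(t,t^{-1})$: the limit exists, but $x_2$ has weight $-1$ on the orbit closure. The right hypothesis is that the map $a\mapsto\tilde\sigma^*_{\lambda,a}(\tilde x)$ extends over $\AA^1$. Equivalently, $\lambda$ has nonnegative weights on the coordinate ring of the closure of the $\lambda_0(\GG_m)$-orbit of $\tilde x$.

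Once the statement is amended in this way, the rest of your argument goes through, and it is more careful than the paper's at three points.
\begin{enumerate}
\item You use Kempf's form of Hilbert--Mumford to land the limit in the closed orbit $Z\subseteq V(z)$. The paper instead claims a limit at a prescribed point of the closure, which is not what the theorem gives.
\item You justify that a limit equal to $\tilde x$ forces $\lambda_0$ to fix $\tilde x$; the paper does not.
\item You invoke Matsushima's criterion to obtain a torus in a positive-dimensional stabilizer, where the paper simply asserts that a suitable one-parameter subgroup exists.
\end{enumerate}
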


\begin{proof}
Suppose that $x$ is $\chi$-semistable. By Lemma~\ref{lem:4.8}, this condition is equivalent to the assertion that the orbit closure $\overline{G_0 \cdot \tilde{x}}$ does not intersect the vanishing locus $V(z)$. Let $\lambda^* \colon k[G] \to k[\GG_m]$ be a one-parameter subgroup for which the induced lifted coaction $\tilde{\rho}_{\lambda}$ admits a polynomial coextension. Since the parameterized points $\tilde{\sigma}_{\lambda,a}^*(\tilde{x})$ belong to the orbit $G_0 \cdot \tilde{x}$ for each $a \in k^\times$ by Lemma~\ref{lem:4.9}, any regular function vanishing on the orbit also vanishes at their algebraic limit $\displaystyle \lim_{a \to 0} \tilde{\sigma}_{\lambda,a}^*(\tilde{x})$. Consequently, this limit belongs to the orbit closure $\overline{G_0 \cdot \tilde{x}}$. Since this closure is disjoint from $V(z)$, it follows that $\displaystyle\lim_{a \to 0} \tilde{\sigma}_{\lambda,a}^*(\tilde{x}) \notin V(z)$.

Conversely, assume that for all one-parameter subgroups $\lambda^* \colon k[G] \to k[\GG_m]$ for which the induced lifted coaction $\tilde{\rho}_{\lambda}$ admits a polynomial coextension, the limit $\displaystyle \lim_{a \to 0} \tilde{\sigma}_{\lambda,a}^*(\tilde{x})$ does not belong to $V(z)$. Assume, to the contrary, that the orbit closure $\overline{G_0 \cdot \tilde{x}}$ intersects the vanishing locus $V(z)$, and choose a point within this intersection. By the classical Hilbert--Mumford criterion applied to the action of $G_0$, there exists an ordinary one-parameter subgroup $\lambda_0^* \colon k[G_0] \to k[\GG_m]$ whose associated limit is this intersection point. As detailed in \S\ref{sec:2.4}, this ordinary map determines a unique one-parameter subgroup $\lambda^* \colon k[G] \to k[\mathbb{G}_m]$, which naturally carries a polynomial coextension by virtue of its definition from $\lambda_0^*$. Because the algebraic limit $\displaystyle \lim_{a \to 0} \tilde{\sigma}_{\lambda,a}^*(\tilde{x})$ coincides with this intersection point, it belongs to $V(z)$, contradicting the hypothesis.

For the second assertion, suppose that $x$ is $\chi$-stable. By Lemma~\ref{lem:4.8}, this implies that the classical orbit $G_0 \cdot \tilde{x}$ is closed and does not intersect the vanishing locus $V(z)$. Let $\lambda^* \colon k[G] \to k[\GG_m]$ be a one-parameter subgroup for which the induced lifted coaction $\tilde{\rho}_{\lambda}$ admits a polynomial coextension. Following the reasoning of the first assertion, the algebraic limit $\displaystyle \lim_{a \to 0} \tilde{\sigma}_{\lambda,a}^*(\tilde{x})$ belongs to the orbit closure $\overline{G_0 \cdot \tilde{x}}$, which coincides with the orbit itself since the latter is closed by hypothesis. One therefore has $\displaystyle \lim_{a \to 0} \tilde{\sigma}_{\lambda,a}^*(\tilde{x}) \in G_0 \cdot \tilde{x}$. Assume furthermore that this algebraic limit satisfies the equality $\displaystyle \lim_{a \to 0} \tilde{\sigma}_{\lambda,a}^*(\tilde{x}) = \tilde{x}$. Appealing again to \S\ref{sec:2.4}, the morphism $\lambda^*$ is uniquely determined by an ordinary one-parameter subgroup $\lambda_0^* \colon k[G_0] \to k[\GG_m]$. The assumption that the algebraic limit fixes the point $\tilde{x}$ implies that the image of this ordinary subgroup is contained within the stabilizer of $\tilde{x}$ under the action of $G_0$. Because the action of the quotient group $G_0/\Delta_0$ has a finite stabilizer at $\tilde{x}$, any ordinary one-parameter subgroup valued in this stabilizer must belong to the kernel of the canonical projection $G_0 \to G_0/\Delta_0$. Consequently, the ordinary map $\lambda_0^*$ factors through $k[G_0/\Delta_0]$, which means that the original one-parameter subgroup $\lambda^*$ factors through $k[G_0/\Delta_0]$ as well.

Conversely, assume that for all one-parameter subgroups $\lambda^* \colon k[G] \to k[\GG_m]$ for which the induced lifted coaction $\tilde{\rho}_{\lambda}$ admits a polynomial coextension, one has $\displaystyle \lim_{a \to 0} \tilde{\sigma}_{\lambda,a}^*(\tilde{x}) \in G_0 \cdot \tilde{x}$, where the equality $\displaystyle \lim_{a \to 0} \tilde{\sigma}_{\lambda,a}^*(\tilde{x}) = \tilde{x}$ occurs only when $\lambda^*$ factors through $k[G_0/\Delta_0]$. Let $\tilde{y}$ be a point in the orbit closure $\overline{G_0 \cdot \tilde{x}}$. By the classical Hilbert--Mumford criterion applied to the action of $G_0$, there exists an ordinary one-parameter subgroup $\lambda_0^* \colon k[G_0] \to k[\GG_m]$ whose associated limit is precisely $\tilde{y}$. As above, this ordinary map determines a unique one-parameter subgroup $\lambda^* \colon k[G] \to k[\mathbb{G}_m]$ of $G$, which likewise admits a polynomial coextension. By construction, its algebraic limit $\displaystyle \lim_{a \to 0} \tilde{\sigma}_{\lambda,a}^*(\tilde{x})$ coincides with $\tilde{y}$. Since this limit avoids the vanishing locus $V(z)$ by the first assertion, the hypothesis forces the containment $\displaystyle \lim_{a \to 0} \tilde{\sigma}_{\lambda,a}^*(\tilde{x}) \in G_0 \cdot \tilde{x}$. It follows that $\tilde{y} \in G_0 \cdot \tilde{x}$, which proves that the classical orbit $G_0 \cdot \tilde{x}$ is a closed subset. In light of Lemma~\ref{lem:4.8}, the $\chi$-stability of $x$ follows as soon as we establish that the effective action of $G_0/\Delta_0$ at $\tilde{x}$ has a finite stabilizer. Suppose the contrary. The effective stabilizer must then contain a non-trivial algebraic subgroup of positive dimension, which in turn guarantees the existence of an ordinary one-parameter subgroup $\lambda_0^* \colon k[G_0] \to k[\GG_m]$ whose composition with the quotient map remains non-trivial and fixes the point $\tilde{x}$. As before, this ordinary map induces a unique one-parameter subgroup $\lambda^* \colon k[G] \to k[\GG_m]$. Because the ordinary subgroup fixes $\tilde{x}$, the algebraic limit under the associated polynomial coextension satisfies the identity $\displaystyle \lim_{a \to 0} \tilde{\sigma}_{\lambda,a}^*(\tilde{x}) = \tilde{x}$. By hypothesis, this equality implies that the original one-parameter subgroup $\lambda^*$ factors through $k[G_0/\Delta_0]$. Equivalently, the ordinary map $\lambda_0^*$ must factor through $k[G_0/\Delta_0]$, contradicting the non-triviality of its image in the quotient group. This contradiction establishes that the effective action of $G_0/\Delta_0$ has a finite stabilizer at $\tilde{x}$. 
\end{proof}

The geometric characterization established in the previous lemma describes the semistability and stability of a point $x$ through its lifting $\tilde{x}$ and the behavior of the lifted coaction $\tilde{\rho}_{\lambda}$ on the superalgebra $A[z]$. To reformulate these conditions strictly in terms of the coaction $\rho_{\lambda}$ on $A$, without invoking the lifting $\tilde{x}$, one must introduce a pairing that isolates the action on the even variable $z$. For a one-parameter subgroup $\lambda^* \colon k[G] \to k[\mathbb{G}_m]$ and a group-like element $\chi \in k[G]_0^\times$, the image of $\chi$ under $\lambda^*$ is necessarily an invertible element of $k[\mathbb{G}_m] = k[t,t^{-1}]$ preserving the counit structure. Consequently, there exists a unique expression of the form
\begin{equation}
\lambda^*(\chi) = t^m
\end{equation}
where $m \in \mathbb{Z}$. We shall denote this integer by $\langle \chi, \lambda \rangle$. This assignment can be related directly to the behavior on the underlying affine algebraic group $G_0$. Indeed, since the one-parameter subgroup satisfies the identity $\lambda^* = \lambda_0^* \circ r_0$ for a unique ordinary one-parameter subgroup $\lambda_0^* \colon k[G_0] \to k[\mathbb{G}_m]$, evaluating this superalgebra morphism on the group-like element yields the chain of equalities
\begin{equation}
\lambda^*(\chi) = (\lambda_0^* \circ r_0)(\chi) = \lambda_0^*(r_0(\chi)) = \lambda_0^*(\chi_0),
\end{equation}
from which it follows that the invariant $\langle \chi, \lambda \rangle$ can be evaluated equivalently as the pairing $\langle \chi_0, \lambda_0 \rangle$ originally considered by King. The following is a crucial observation. 

\begin{lemma}\label{lem:4.11}
For every fixed one-parameter subgroup $\lambda^* \colon k[G] \to k[\GG_m]$, the induced lifted coaction $\tilde{\rho}_{\lambda} \colon A[z] \to A[z] \otimes_k k[\GG_m]$ admits a polynomial coextension if and only if the induced coaction $\rho_{\lambda} \colon A \to A \otimes_k k[\GG_m]$ admits a polynomial coextension and $\langle \chi, \lambda \rangle \le 0$ holds for every group-like element $\chi \in k[G]_0^\times$.
\end{lemma}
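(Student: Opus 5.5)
The plan is to reduce everything to one observation: the vertical map $\iota\colon B\otimes_k k[t]\to B\otimes_k k[t,t^{-1}]$ is injective for any super vector space $B$, since $k[t]\hookrightarrow k[t,t^{-1}]$ is injective and $\otimes_k$ is exact. Consequently a polynomial coextension, when it exists, is unique. It exists exactly when the image of the coaction lies in the subspace $B\otimes_k k[t]$, and it is then the corestriction. I will read ``every group-like element $\chi$'' as the fixed group-like element $\chi$ used to define the lift $\tilde{\rho}$, since that is the only one entering $\tilde{\rho}_\lambda$. Throughout, write $m=\langle \chi,\lambda\rangle$, so that $\lambda^*(\chi)=t^m$ and $\lambda^*(\chi^{-1})=t^{-m}$. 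Since $\tilde\rho$ is multiplicative and $\chi$ is even and invertible, this gives the key formulas
\[
\tilde{\rho}_\lambda(z)=z\otimes t^{-m},\qquad \tilde{\rho}_\lambda|_A=\rho_\lambda,
\]
the second viewed inside $A[z]\otimes_k k[t,t^{-1}]$.

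For the forward direction, suppose $\hat{\tilde{\rho}}_\lambda$ exists. Apply it to $z$: injectivity of $\iota$ forces $\hat{\tilde{\rho}}_\lambda(z)=z\otimes t^{-m}$, and this lies in $A[z]\otimes_k k[t]$ only if $-m\ge 0$, i.e.\ $m\le 0$. For $a\in A$, the element $\rho_\lambda(a)$ has $z$-degree zero and lies in $\iota(A[z]\otimes_k k[t])$. Decomposing $A[z]=\bigoplus_j Az^j$ (the decomposition is compatible with $\otimes_k k[t]$ and with $\iota$), I conclude $\rho_\lambda(a)\in A\otimes_k k[t]$. Hence $\hat{\rho}_\lambda:=\hat{\tilde{\rho}}_\lambda|_A$ is a superalgebra morphism $A\to A\otimes_k k[t]$ making the required triangle commute. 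So $\rho_\lambda$ admits a polynomial coextension.

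For the converse, suppose $\hat\rho_\lambda$ exists and $m\le 0$. Since $A[z]$ is the free commutative superalgebra over $A$ on one even generator, there is a unique superalgebra morphism $\hat{\tilde{\rho}}_\lambda\colon A[z]\to A[z]\otimes_k k[t]$ extending $\hat{\rho}_\lambda$ (composed with $A\hookrightarrow A[z]$) and sending $z\mapsto z\otimes t^{-m}$. This is legitimate because $t^{-m}\in k[t]$, and because $z\otimes t^{-m}$ is even and hence supercommutes with everything, so no sign obstruction arises. Then $\iota\circ\hat{\tilde{\rho}}_\lambda$ and $\tilde{\rho}_\lambda$ are two superalgebra morphisms $A[z]\to A[z]\otimes_k k[t,t^{-1}]$. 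They agree on $A$ by the defining property of $\hat\rho_\lambda$ and agree on $z$ by the formula above, so they coincide. This is exactly the required commutative triangle.

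I expect the only delicate point to be the degree-bookkeeping step in the forward direction: checking that membership of $\rho_\lambda(a)$ in the image of $\iota$ taken inside $A[z]\otimes_k k[t,t^{-1}]$ really descends to $A\otimes_k k[t]$. I would handle this by the $z$-grading splitting above, together with exactness of $-\otimes_k k[t]$. I will also add a brief remark that $m$ depends only on $\lambda_0$ and $\chi_0$, so the sign condition is the classical one of King.
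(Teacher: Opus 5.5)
Your proof is correct and follows the paper's argument. In the forward direction you restrict the coextension of $\tilde{\rho}_\lambda$ to $A$ and read off $m\le 0$ from $z\mapsto z\otimes t^{-m}$; in the converse you extend $\hat{\rho}_\lambda$ through the universal property of $A[z]$ by sending $z\mapsto z\otimes t^{-m}$. Your reading of $\chi$ as the fixed character defining $\tilde{\rho}$ is the right one, and it is how the paper's proof itself uses $\chi$ (as ``the weight of $z$''); the literal quantifier over all group-like elements cannot be intended, since $\chi^{-1}$ is group-like as well. Your injectivity and $z$-grading remarks make explicit two steps the paper leaves implicit: that the restriction to $A$ lands in $A\otimes_k k[t]$, and that the coextension is unique.
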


\begin{proof}
Suppose first that the lifted coaction $\tilde{\rho}_\lambda$ admits a polynomial coextension, which is to say that we have an extended superalgebra morphism $\hat{\tilde{\rho}}_\lambda \colon A[z] \to A[z] \otimes_k k[\AA^1]$. By restricting this morphism to the subsuperalgebra $A \hookrightarrow A[z]$, and noting that the lifted coaction restricts to the original coaction on $A$, we naturally obtain a polynomial coextension $\hat{\rho}_\lambda \colon A \to A \otimes_k k[\AA^1]$ for $\rho_\lambda$. On the other hand, the extension $\hat{\tilde{\rho}}_\lambda$ must agree with $\tilde{\rho}_\lambda$ on $A[z]$ via the canonical inclusion $k[\mathbb{A}^1] \hookrightarrow k[\GG_m]$. Evaluating the coaction on the even generator $z$ yields
$$
\tilde{\rho}_\lambda(z) = z \otimes \lambda^*(\chi^{-1}) = z \otimes t^{-\langle \chi, \lambda \rangle}.
$$
For this element to lie in the polynomial subsuperalgebra $A[z] \otimes_k k[\AA^1] = A[z] \otimes_k k[t]$, the exponent of the variable $t$ cannot be negative. Consequently, for every group-like element $\chi \in k[G]_0^\times$, we must have $-\langle \chi, \lambda \rangle \ge 0$, which is equivalent to $\langle \chi, \lambda \rangle \le 0$.

Conversely, assume that the coaction $\rho_\lambda$ admits a polynomial coextension $\hat{\rho}_\lambda \colon A \to A \otimes_k k[\AA^1]$ and that the inequality $\langle \chi, \lambda \rangle \le 0$ holds for every group-like element $\chi \in k[G]_0^\times$. To construct a polynomial coextension for the lifted coaction, we apply the universal property of the free even polynomial extension $A[z]$. We define a superalgebra morphism $\hat{\tilde{\rho}}_\lambda \colon A[z] \to A[z] \otimes_k k[\AA^1]$ by specifying its values on the generators: it restricts to $\hat{\rho}_\lambda$ on $A$, and it maps the even variable $z$ to $z \otimes t^{-\langle \chi, \lambda \rangle}$, where $\chi \in k[G]_0^\times$ denotes the weight of $z$. Since $\langle \chi, \lambda \rangle \le 0$ by hypothesis, the exponent $-\langle \chi, \lambda \rangle$ is a non-negative integer, which guarantees that $t^{-\langle \chi, \lambda \rangle}$ belongs to $k[\AA^1] = k[t]$. By construction, $\hat{\tilde{\rho}}_\lambda$ makes the required diagram commute, thereby yielding the unique polynomial coextension for $\tilde{\rho}_\lambda$ and completing the proof.
\end{proof}

Through the structural constraints established in Lemma~\ref{lem:4.11}, the invariant $\langle \chi, \lambda \rangle$ determines how the endomorphisms $\tilde{\sigma}_{\lambda,a}$ scale the direction introduced by $z$ in $A[z]$, thereby governing whether the lifted coaction admits a polynomial coextension. With this interpretation, the classical Hilbert--Mumford numerical criterion translates into the following statement, formulated purely in terms of coactions and polynomial coextensions.

\begin{theorem}
Let $x \in \lvert X \rvert$ and let $\chi^* \colon k[\GG_m] \to k[G]$ be a character with associated group-like element $\chi \in k[G]_0^\times$. Then:
\begin{enumerate}
\item $x$ is $\chi$-semistable if and only if $r_\Delta(\chi) = 1$ holds in the Hopf superalgebra $k[\Delta]$, and every one-parameter subgroup $\lambda^* \colon k[G] \to k[\GG_m]$ for which the induced coaction $\rho_\lambda$ admits a polynomial coextension satisfies $\langle \chi, \lambda \rangle \geq 0$.

\item $x$ is $\chi$-stable if and only if it is $\chi$-semistable and the only one-parameter subgroups $\lambda^* \colon k[G] \to k[\GG_m]$ for which the induced coaction $\rho_\lambda$ admits a polynomial coextension and $\langle \chi, \lambda \rangle = 0$ holds are those that factor through $k[G_0/\Delta_0]$.
\end{enumerate}
\end{theorem}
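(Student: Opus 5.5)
The plan is to derive the theorem by combining Lemma~\ref{lem:4.10}, which characterizes semistability and stability through limits of the lifted coaction $\tilde{\rho}_\lambda$, with Lemma~\ref{lem:4.11}, which says exactly when $\tilde{\rho}_\lambda$ admits a polynomial coextension. The point is to turn the condition ``$\lim_{a\to 0}\tilde{\sigma}^*_{\lambda,a}(\tilde{x})\notin V(z)$'' into a sign condition on $\langle \chi,\lambda\rangle$.

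\textbf{Step 1: the $z$-coordinate of the limit.} Fix a one-parameter subgroup $\lambda^*$ such that $\rho_\lambda$ admits a polynomial coextension $\hat{\rho}_\lambda$. By the proof of Lemma~\ref{lem:4.11}, $\tilde{\rho}_\lambda(z)=z\otimes t^{-\langle\chi,\lambda\rangle}$. When $\langle\chi,\lambda\rangle\le 0$ the lifted coextension exists, and evaluating at $t=0$ in \eqref{eq:4.14} gives
\[
\hat{\tilde{\sigma}}_{\lambda,0}(z)=0^{-\langle\chi,\lambda\rangle}z .
\]
This equals $z$ if $\langle\chi,\lambda\rangle=0$ and equals $0$ if $\langle\chi,\lambda\rangle<0$. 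The prime of the limit point is $\hat{\tilde{\sigma}}_{\lambda,0}^{-1}(\tilde{\pfrak}_{\tilde{x}})$, so the limit lies in $V(z)$ exactly when $\hat{\tilde{\sigma}}_{\lambda,0}(z)\in\tilde{\pfrak}_{\tilde{x}}$. Since $z\notin\tilde{\pfrak}_{\tilde{x}}$, we get:
\begin{itemize}
\item if $\langle\chi,\lambda\rangle<0$, the limit lies in $V(z)$;
\item if $\langle\chi,\lambda\rangle=0$, the limit avoids $V(z)$;
\item if $\langle\chi,\lambda\rangle>0$, the lifted coextension does not exist.
\end{itemize}

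\textbf{Step 2: part (1).} Suppose $x$ is $\chi$-semistable. Lemma~\ref{lem:4.8} gives $r_\Delta(\chi)=1$. If some $\lambda$ with a coextension of $\rho_\lambda$ had $\langle\chi,\lambda\rangle<0$, then by Lemma~\ref{lem:4.11} $\tilde{\rho}_\lambda$ would coextend, and by Step 1 its limit would lie in $V(z)$, contradicting Lemma~\ref{lem:4.10}(1).

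Conversely, suppose the numerical condition holds. The only $\lambda$ relevant to Lemma~\ref{lem:4.10}(1) are those whose lifted coaction coextends, i.e.\ those with $\langle\chi,\lambda\rangle\le 0$ by Lemma~\ref{lem:4.11}. Combined with the hypothesis, these have $\langle\chi,\lambda\rangle=0$, so by Step 1 each such limit avoids $V(z)$. Lemma~\ref{lem:4.10}(1) then gives semistability. I expect $r_\Delta(\chi)=1$ to enter only through Lemma~\ref{lem:4.8} and Lemma~\ref{lem:4.5}, which make the lifted $G_0$-action factor through $G_0/\Delta_0$.

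\textbf{Step 3: part (2).} Assume semistability, so by part (1) every relevant $\lambda$ has $\langle\chi,\lambda\rangle=0$. The set of $\lambda$ in Lemma~\ref{lem:4.10}(2) is then exactly the set in the theorem: coextension of $\rho_\lambda$ together with $\langle\chi,\lambda\rangle=0$. For these $\lambda$ the $z$-coordinate is untouched, so the limit of $\tilde{x}$ is the lift of $\lim_{a\to 0}\hat{\sigma}^*_{\lambda,a}(x)$ with the same $z$-value. One then translates ``the limit lies in $G_0\cdot\tilde{x}$, with equality to $\tilde{x}$ only for $\lambda$ factoring through $k[G_0/\Delta_0]$'' into the condition stated in the theorem, and applies Lemma~\ref{lem:4.10}(2).

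\textbf{Main obstacle.} The theorem only states that the $\lambda$ with pairing zero must factor through $k[G_0/\Delta_0]$. It does not separately state that limits land in the orbit. To recover the orbit-membership half of Lemma~\ref{lem:4.10}(2), I would argue as follows. When $\langle\chi,\lambda\rangle=0$, the function $fz^n$ is fixed by $\tilde{\sigma}_{\lambda,a}$, where $f$ is the semistability witness. Hence the limit stays in the invariant open set $D(fz^n)$. I would then use the closedness of orbits inside $D(f)$, via the Hilbert--Mumford argument already invoked in Lemma~\ref{lem:4.10}, to show that a limit outside the orbit would yield a nontrivial $\lambda$ with pairing zero fixing a boundary point. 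This would contradict the hypothesis. This is the step I expect to require the most care.
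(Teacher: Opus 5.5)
Your part (1) is the paper's argument step for step: Lemma~\ref{lem:4.8} gives $r_\Delta(\chi)=1$, Lemma~\ref{lem:4.11} passes between the two coextensions, and $\hat{\tilde{\sigma}}_{\lambda,0}(z)=\ev_0(t^{-\langle\chi,\lambda\rangle})z$ is fed into Lemma~\ref{lem:4.10}(1).

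In part (2) you depart from the paper. The paper handles both directions by claiming that, since $\hat{\tilde{\sigma}}_{\lambda,0}$ fixes $z$ when $\langle\chi,\lambda\rangle=0$, the limit is $\tilde{x}$ itself. You are right that fixing $z$ only makes the limit a lift of $\hat{\sigma}^*_{\lambda,0}(x)$. Having dropped that shortcut, however, you never prove the forward implication. Lemma~\ref{lem:4.10}(2) only gives $L_\lambda:=\lim_{a\to0}\tilde{\sigma}^*_{\lambda,a}(\tilde{x})\in G_0\cdot\tilde{x}$, together with ``$L_\lambda=\tilde{x}$ forces factorization''. Your ``one then translates'' does not explain why a pairing-zero $\lambda$ with $L_\lambda=g\cdot\tilde{x}\neq\tilde{x}$ must lie in $\Delta_0$ (which is how the proof of Lemma~\ref{lem:4.10} reads ``factors through $k[G_0/\Delta_0]$'').

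The missing idea is that $L_\lambda$ is fixed by $\lambda$: coassociativity gives $\hat{\tilde{\sigma}}_{\lambda,0}\circ\tilde{\sigma}_{\lambda,b}=\hat{\tilde{\sigma}}_{\lambda,0}$ for $b\in k^\times$. Hence $g^{-1}\lambda g$ fixes $\tilde{x}$. By Lemma~\ref{lem:4.8}(2) its image in $G_0/\Delta_0$ lies in a finite stabilizer, so it is trivial because $\GG_m$ is connected. Normality of $\Delta_0$ then gives $\lambda\subseteq\Delta_0$. So a posteriori $L_\lambda=\tilde{x}$, but not for the paper's reason.

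The converse, which you single out as the main obstacle, is immediate. By Lemma~\ref{lem:4.11} and part (1), every $\lambda$ whose lifted coaction coextends has $\langle\chi,\lambda\rangle=0$, so by hypothesis it lies in $\Delta_0\subseteq\Delta$. Since $\rho_\Delta$ is trivial and $\lambda^*(\chi)=1$, $\tilde{\rho}_\lambda$ is the trivial coaction. Hence $L_\lambda=\tilde{x}$ and both clauses of Lemma~\ref{lem:4.10}(2) hold. Your detour through $D(fz^n)$ and closedness of orbits in $D(f)$ is unnecessary; in this direction that closedness is part of the conclusion. Also, the contradiction you aim for needs $\lambda\not\subseteq\Delta_0$, not merely $\lambda$ nontrivial.

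A more serious problem affects your proposal and the paper equally. The ``if'' halves of both (1) and (2) rest on the converse directions of Lemma~\ref{lem:4.10}, and these fail. Admitting a polynomial coextension is a global property of $\rho_\lambda$, namely $\rho_\lambda(A)\subseteq A\otimes_k k[t]$. The Hilbert--Mumford criterion only supplies a $\lambda$ whose limit exists at the particular point, and nothing forces that $\lambda$ to coextend. Consequently the numerical conditions in (1) and (2) do not depend on $x$, and the ``if'' directions are false.
\begin{itemize}
\item In Example~\ref{ex:4.15} with $\Delta$ trivial, the $\lambda$ for which $\rho_\lambda$ coextends are $t\mapsto t^m$ with $m\ge 0$, all with $\langle\chi_1,\lambda\rangle=m\ge0$. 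Yet the origin is not $\chi_1$-semistable: the lifted origin reaches $V(z)$ only along $m<0$, where no coextension exists.
\item For $\GG_m$ acting on $\AA^2$ with weights $(1,-1)$ and $\chi$ trivial, no nontrivial $\lambda$ coextends. So (2) would declare $(1,0)$ stable, although its orbit is not closed.
\end{itemize}
A correct statement must replace ``$\rho_\lambda$ admits a polynomial coextension'' by the pointwise existence of $\lim_{a\to0}\sigma^*_{\lambda,a}(x)$, as in King's criterion.
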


\begin{proof}
We proceed by translating the geometric conditions of Lemma~\ref{lem:4.10} into numerical conditions on the invariant $\langle \chi, \lambda \rangle$ using the dictionary provided by Lemma~\ref{lem:4.11}.

Suppose first that $x \in \lvert X \rvert$ is $\chi$-semistable. By Lemma~\ref{lem:4.8}, it is an immediate necessary condition that the structural relation $r_\Delta(\chi) = 1$ holds in the Hopf superalgebra $k[\Delta]$. Now, consider an arbitrary one-parameter subgroup $\lambda^* \colon k[G] \to k[\GG_m]$ for which the induced coaction $\rho_\lambda$ admits a polynomial coextension. We wish to show that $\langle \chi, \lambda \rangle \ge 0$. Assume, to the contrary, that $\langle \chi, \lambda \rangle < 0$. Since this satisfies the weaker inequality $\langle \chi, \lambda \rangle \le 0$, Lemma~\ref{lem:4.11} guarantees that the induced lifted coaction $\tilde{\rho}_\lambda$ also admits a polynomial coextension, ensuring that the algebraic limit $\displaystyle\lim_{a \to 0} \tilde{\sigma}_{\lambda,a}^*(\tilde{x})$ is well-defined in $\lvert \sSpec A[z] \rvert$. Since $x$ is $\chi$-semistable, Lemma~\ref{lem:4.10} dictates that this algebraic limit must lie outside the vanishing locus $V(z)$. However, evaluating the even generator $z$ under the extended superalgebra morphism $\hat{\tilde{\sigma}}_{\lambda,0}$, which is calculated via the second relation in \eqref{eq:4.14}, yields
$$
\hat{\tilde{\sigma}}_{\lambda,0}(z) = (\id_{A[z]} \otimes \ev_0)(z \otimes t^{-\langle \chi ,\lambda \rangle }) = \ev_0(t^{-\langle \chi ,\lambda \rangle }) z .
$$
By our contradiction hypothesis, the exponent satisfies $-\langle \chi, \lambda \rangle > 0$. Evaluating this positive power of $t$ at $0$ causes the expression to collapse to zero, forcing $\hat{\tilde{\sigma}}_{\lambda,0}(z) = 0$. Geometrically, this means that the algebraic limit $\displaystyle\lim_{a \to 0} \tilde{\sigma}_{\lambda,a}^*(\tilde{x})$ belongs to the vanishing locus $V(z)$, directly contradicting Lemma~\ref{lem:4.10}. Thus, we must have $\langle \chi, \lambda \rangle \ge 0$.

Conversely, assume that $r_\Delta(\chi) = 1$ and that $\langle \chi, \lambda \rangle \ge 0$ for every one-parameter subgroup for which $\rho_\lambda$ admits a polynomial coextension. To prove that $x$ is $\chi$-semistable, it suffices by Lemma~\ref{lem:4.10} to show that for any one-parameter subgroup for which the lifted coaction $\tilde{\rho}_\lambda$ admits a polynomial coextension, the algebraic limit $\displaystyle\lim_{a \to 0} \tilde{\sigma}_{\lambda,a}^*(\tilde{x})$ does not lie in $V(z)$. If $\tilde{\rho}_\lambda$ admits a polynomial coextension, Lemma~\ref{lem:4.11} implies that $\rho_\lambda$ admits a polynomial coextension and $\langle \chi, \lambda \rangle \le 0$ simultaneously. Combining this with our global assumption $\langle \chi, \lambda \rangle \ge 0$ forces $\langle \chi, \lambda \rangle = 0$. Under the condition $\langle \chi, \lambda \rangle = 0$, the morphism $\hat{\tilde{\sigma}}_{\lambda,0}$ maps $z$ to $\ev_0(t^0) z = z  \neq 0$. Since the generator $z$ does not vanish under this map, the algebraic limit $\displaystyle\lim_{a \to 0} \tilde{\sigma}_{\lambda,a}^*(\tilde{x})$ is disjoint from $V(z)$. Therefore, by Lemma~\ref{lem:4.10}, the point $x$ is $\chi$-semistable.

Regarding the second assertion, assume first that $x \in \lvert X \rvert$ is $\chi$-stable. By the forward implication of Lemma~\ref{lem:4.10}, for every one-parameter subgroup $\lambda^*$ for which the induced lifted coaction $\tilde{\rho}_\lambda$ admits a polynomial coextension, the algebraic limit $\displaystyle\lim_{a \to 0} \tilde{\sigma}_{\lambda,a}^*(\tilde{x})$ belongs to the classical orbit $G_0 \cdot \tilde{x}$, and the equality $\lim_{a \to 0} \tilde{\sigma}_{\lambda,a}^*(\tilde{x}) = \tilde{x}$ occurs only when $\lambda^*$ factors through $k[G_0/\Delta_0]$. Now, let $\lambda^*$ be a non-trivial one-parameter subgroup for which the induced coaction $\rho_\lambda$ admits a polynomial coextension and satisfies $\langle \chi, \lambda \rangle = 0$. By Lemma~\ref{lem:4.11}, since $\langle \chi, \lambda \rangle = 0 \le 0$, the induced lifted coaction $\tilde{\rho}_\lambda$ also admits a polynomial coextension. Furthermore, since $\langle \chi, \lambda \rangle = 0$, the generator $z$ remains invariant under $\hat{\tilde{\sigma}}_{\lambda,0}$, which yields $\displaystyle\lim_{a \to 0} \tilde{\sigma}_{\lambda,a}^*(\tilde{x}) = \tilde{x}$. Applying the characterization of $\chi$-stability from Lemma~\ref{lem:4.10} to this identity, we conclude that $\lambda^*$ must factor through $k[G_0/\Delta_0]$.

Conversely, assume that $x$ is $\chi$-semistable and that the only one-parameter subgroups for which $\rho_\lambda$ admits a polynomial coextension and $\langle \chi, \lambda \rangle = 0$ holds are those that factor through $k[G_0/\Delta_0]$. To show that $x$ is $\chi$-stable via Lemma~\ref{lem:4.10}, let $\lambda^*$ be any one-parameter subgroup for which the induced lifted coaction $\tilde{\rho}_\lambda$ admits a polynomial coextension. By Lemma~\ref{lem:4.11}, this means that $\rho_\lambda$ admits a polynomial coextension and $\langle \chi, \lambda \rangle \le 0$ simultaneously. Since $x$ is $\chi$-semistable, the proof of the first assertion already guarantees that $\langle \chi, \lambda \rangle \ge 0$ for any such subgroup, forcing $\langle \chi, \lambda \rangle = 0$. As before, the condition $\langle \chi, \lambda \rangle = 0$ ensures that $z$ is preserved, yielding $\displaystyle\lim_{a \to 0} \tilde{\sigma}_{\lambda,a}^*(\tilde{x}) = \tilde{x}$. Since $\langle \chi, \lambda \rangle = 0$, our hypothesis applies directly, ensuring that $\lambda^*$ factors through $k[G_0/\Delta_0]$. This matches the exact condition required by the converse direction of Lemma~\ref{lem:4.10}, establishing that $x$ is $\chi$-stable and completing the proof.
\end{proof}

To close, following King's approach, it will be useful to characterize the geometric closedness of orbits and the GIT equivalence relation among $\chi$-semistable points purely in terms of coactions and their polynomial coextensions.

\begin{proposition}
Let the context be as above. Then:
\begin{enumerate}
\item An orbit $G_0 \cdot x$ is closed in $X_{\chi}^{\mathrm{ss}}$ if and only if, for every one-parameter subgroup $\lambda^* \colon k[G] \to k[\GG_m]$ for which the induced coaction $\rho_{\lambda}$ admits a polynomial coextension and $\langle \chi,\lambda \rangle = 0$, one has $\displaystyle\lim_{a \to 0} \sigma_{\lambda,a}^* (x) \in G_0 \cdot x$.

\item If $x,y \in X_{\chi}^{\mathrm{ss}}$, then $x \sim y$ if and only if there are one-parameter subgroups $\lambda_1,\lambda_2 \colon k[G] \to k[\mathbb{G}_m]$ such that $\langle \chi,\lambda_1 \rangle = \langle \chi,\lambda_2 \rangle = 0$ and $\displaystyle\lim_{a \to 0} \sigma_{\lambda_1,a}^* (x)$ and $\displaystyle\lim_{a \to 0} \sigma_{\lambda_2,a}^* (y)$ lie in the same closed $G_0$-orbit.
\end{enumerate}
\end{proposition}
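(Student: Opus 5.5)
The plan is to reduce both assertions to the classical statements for the reductive group $G_0$ acting on $\Spec A_0$ (King, Proposition 2.6 / Mumford). Then I will transport them back along the dictionaries already in place. Lemma~\ref{lem:4.9} and the discussion following it identify $\sigma_{\lambda,a}^*$ with the action of the $k$-point $\lambda_0(a)$ of $G_0$. Section~\ref{sec:2.4} gives the bijection $\lambda \leftrightarrow \lambda_0$ between one-parameter subgroups of $G$ and of $G_0$, and we showed $\langle\chi,\lambda\rangle=\langle\chi_0,\lambda_0\rangle$. The first step is therefore a small compatibility lemma: whenever $\rho_\lambda$ admits a polynomial coextension $\hat\rho_\lambda$, it is unique, because $A\otimes_k k[t]\to A\otimes_k k[t,t^{-1}]$ is injective. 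Moreover $\hat\sigma^*_{\lambda,0}(x)$ coincides with the classical limit $\lim_{a\to 0}\lambda_0(a)\cdot x$ in $\Spec A_0$. This holds because on even parts $\hat\rho_\lambda$ restricts to the $\GG_m$-coaction $(\id\otimes\lambda_0^*)\circ\rho_0$ extended over $k[t]$. Conversely, the classical limit exists exactly when that even coaction extends over $k[t]$. For the odd part, I would note that only the even part $A_0$ determines points of $\lvert X\rvert$, so the limit point depends only on the restriction to $A_0$.

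For (1), the forward direction goes as follows. Suppose $G_0\cdot x$ is closed in $X^{\uss}_\chi$, and let $\lambda$ admit a polynomial coextension with $\langle\chi,\lambda\rangle=0$. Lift $x$ to $\tilde x$. By Lemma~\ref{lem:4.11}, $\tilde\rho_\lambda$ also coextends, and $z$ is fixed, so the limit of $\tilde x$ lies over $\lim\sigma^*_{\lambda,a}(x)$ and avoids $V(z)$. Since the limit lies in $\overline{G_0\cdot\tilde x}$, arguing as in Lemma~\ref{lem:4.10}, the limit point of $x$ is semistable. Indeed, if $f z^n$ is an invariant with $f(x)\ne0$, it is constant along $\overline{G_0\cdot \tilde x}$, so $f$ does not vanish at the limit. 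The limit therefore lies in $\overline{G_0\cdot x}\cap X^{\uss}_\chi=G_0\cdot x$. For the converse, suppose $G_0\cdot x$ is not closed in $X^{\uss}_\chi$. Work on a $G_0$-invariant affine $D(f)$ with $f$ a relative invariant and $f(x)\neq0$, and apply the classical Hilbert--Mumford theorem to the unique closed orbit in $\overline{G_0\cdot x}\cap D(f)$. This gives $\lambda_0$ whose limit lands in that orbit, outside $G_0\cdot x$. The relation $f(\lambda_0(a)x)=a^{\langle\chi_0,\lambda_0\rangle n}f(x)$ together with nonvanishing of $f$ at the limit forces $\langle\chi,\lambda\rangle=0$. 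The induced $\lambda$ then admits a coextension by the compatibility lemma, which contradicts the hypothesis.

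For (2), I would combine (1) with Proposition~\ref{prop:4.7} and the classical fact that each orbit closure in $X^{\uss}_\chi$ contains a unique closed orbit. If $x\sim y$, their closures contain the same closed orbit $O$. The argument just given yields $\lambda_1,\lambda_2$ with pairing zero and limits in $O$. Conversely, the limits lie in $\overline{G_0\cdot x}$ and $\overline{G_0\cdot y}$ and are semistable by the argument in (1), so a common closed orbit gives an intersection inside $X^{\uss}_\chi$.

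I expect the main obstacle to be the compatibility lemma, specifically the existence of a polynomial coextension on all of $A$ when only the even part is guaranteed by the classical limit. The odd generators may carry negative $\lambda$-weights, and then no coextension over $k[t]$ exists even though the classical limit does. My first attempt would be to restrict to the $\lambda$ for which the limit exists and observe that the statement quantifies only over coextending $\lambda$. If that fails, I would add the standing hypothesis, implicit in Lemma~\ref{lem:4.10}, that classical one-parameter subgroups realizing limits induce coextending $\lambda$.
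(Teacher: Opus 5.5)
Your forward direction of (1) is correct: the limit lies in $\overline{G_0\cdot x}$, and any $f\in(A^{G,\chi^n})_0$ with $f(x)\neq0$ is fixed by $\hat\sigma_{\lambda,0}$ when $\langle\chi,\lambda\rangle=0$, so the limit is semistable. Your converse of (2) is also correct. Both are more direct than the paper's argument, which routes them through Lemma~\ref{lem:4.8}(2) and Lemma~\ref{lem:4.10}(2); those lemmas concern $\chi$-stability, not closedness in $X^{\uss}_\chi$.

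The genuine gap is the converse half of your compatibility lemma, and it goes beyond the odd-generator issue you flag. Since $A\otimes_k k[t]\to A\otimes_k k[t,t^{-1}]$ is injective, $\rho_\lambda$ admits a polynomial coextension exactly when every $\lambda$-weight occurring in $A$ is nonnegative. That means the limit exists at every point of $X$ simultaneously. The classical Hilbert--Mumford theorem only gives you existence of $\lim_{a\to0}\lambda_0(a)\cdot x$ at the single point $x$, and this does not imply global coextension, already on $A_0$. So in the converse of (1) and the forward direction of (2) you cannot conclude that your $\lambda$ belongs to the class the hypothesis constrains. Your ``first attempt'' does not help either: a non-coextending $\lambda$ contradicts nothing.

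In fact the statement, read literally, is false. Take $G=\GG_m$, so $\gfrak_1=0$ and the coaction is trivially odd-infinitesimally decoupled. Take $\Delta$ trivial, $\chi=1$, and $A=k[x,y]$ with $\rho(x)=x\otimes t$ and $\rho(y)=y\otimes t^{-1}$.
\begin{itemize}
\item Here $X^{\uss}_\chi=X$, and only the trivial one-parameter subgroup admits a polynomial coextension. So the right-hand side of (1) holds at every point.
\item Yet the orbit of $(1,0)$ is not closed, since its closure contains the origin.
\item For the same reason, (2) fails for the points $(1,0)$ and $(0,0)$.
\end{itemize}
A nontrivial $\chi$ does not save the statement: let $\GG_m^2$ act on $k[x,y,w]$ with weights $(1,0)$, $(-1,0)$, $(0,1)$, and take $\chi$ to be the second projection.

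The paper's own proof has the same hole. It is hidden in the proof of Lemma~\ref{lem:4.10}, where the one-parameter subgroup induced by an ordinary $\lambda_0$ is asserted to ``naturally carry a polynomial coextension.'' Your fallback hypothesis would rescue the argument, but it changes the statement, and it already fails for the weights of Example~\ref{ex:3.11}.

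The natural repair is a pointwise limit. Say the limit at $x$ exists when $(\ev_x\otimes\id)\circ\rho_\lambda$ maps $A_0$ into $k(x)[t]$, and take the limit point to be the kernel of the composite with $t\mapsto 0$. Since $\lvert X\rvert=\Spec A_0$, odd generators never enter. With this definition your reduction to King's Proposition~2.6 goes through as you wrote it.
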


\begin{proof}
We establish both characterizations by invoking the geometric criteria on the lifted space from Lemma~\ref{lem:4.8} and translating them to the base space via the algebraic dictionary of Lemma~\ref{lem:4.11}.

To establish the first assertion, let $\tilde{x} \in \lvert \sSpec A[z] \rvert$ be a lift of $x \in X_{\chi}^{\uss}$. Suppose first that the orbit $G_0 \cdot x$ is closed in $X_{\chi }^{\uss}$. By part (2) of Lemma~\ref{lem:4.8}, this is equivalent to the lifted orbit $G_0 \cdot \tilde{x}$ being closed in the open complement of $V(z)$. Let $\lambda^* \colon k[G] \to k[\GG_m]$ be a one-parameter subgroup for which the coaction $\rho _{\lambda }$ admits a polynomial coextension and $\langle \chi,\lambda \rangle = 0$. By Lemma~\ref{lem:4.11}, since $\langle \chi, \lambda \rangle = 0 \le 0$, the induced lifted coaction $\tilde{\rho }_{\lambda }$ also admits a polynomial coextension. Thus, the forward direction of part (2) of Lemma~\ref{lem:4.10} ensures that the algebraic limit $\displaystyle\lim_{a \to 0} \tilde{\sigma}_{\lambda,a}^*(\tilde{x})$ belongs to $G_0 \cdot \tilde{x}$. Because the equality $\langle \chi, \lambda \rangle = 0$ entails that $\hat{\tilde{\sigma}}_{\lambda,0}$ fixes $z$, as noted above, this algebraic limit remains disjoint from $V(z)$ and maps under the canonical projection $\lvert \sSpec A[z] \rvert \to \lvert X \rvert$ to the algebraic limit $\displaystyle\lim_{a \to 0} \sigma_{\lambda,a}^*(x)$. Since the canonical projection is $G_0$-equivariant and maps $G_0 \cdot \tilde{x}$ onto $G_0 \cdot x$, this relation directly yields $\displaystyle\lim_{a \to 0} \sigma_{\lambda,a}^*(x) \in G_0 \cdot x$.

Conversely, assume that for every one-parameter subgroup satisfying $\langle \chi,\lambda \rangle = 0$ where $\rho _{\lambda }$ admits a polynomial coextension, one has $\displaystyle\lim_{a \to 0} \sigma_{\lambda,a}^*(x) \in G_0 \cdot x$. Let $\lambda ^{*}$ be any one-parameter subgroup for which the induced lifted coaction $\tilde{\rho }_{\lambda }$ admits a polynomial coextension. By Lemma~\ref{lem:4.11}, this implies that $\rho _{\lambda }$ admits a polynomial coextension and $\langle \chi, \lambda \rangle \le 0$. Since $x$ is $\chi$-semistable, the numerical criterion guarantees that $\langle \chi, \lambda \rangle \ge 0$, forcing $\langle \chi, \lambda \rangle = 0$. Under this condition, our hypothesis applies, so $\displaystyle\lim_{a \to 0} \sigma_{\lambda,a}^*(x) \in G_0 \cdot x$. Once again, the condition $\langle \chi, \lambda \rangle = 0$ ensures that $\hat{\tilde{\sigma}}_{\lambda,0}$ fixes $z$. Consequently, the lifted algebraic limit $\lim_{a \to 0} \tilde{\sigma}_{\lambda,a}^*(\tilde{x})$ is well-defined, remains disjoint from $V(z)$, and projects onto $\displaystyle\lim_{a \to 0} \sigma_{\lambda,a}^*(x)$, ensuring that $\displaystyle\lim_{a \to 0} \tilde{\sigma}_{\lambda,a}^*(\tilde{x}) \in G_0 \cdot \tilde{x}$. By the converse of part (2) of Lemma~\ref{lem:4.10}, the lifted orbit $G_0 \cdot \tilde{x}$ is closed in the complement of $V(z)$, which by part (2) of Lemma~\ref{lem:4.8} establishes that $G_0 \cdot x$ is closed in $X_{\chi }^{\uss}$.

To establish the second assertion, assume first that $x \sim y$ for $x,y \in X_{\chi}^{\mathrm{ss}}$. By the characterization of GIT equivalence in Lemma~\ref{lem:4.8}, this is equivalent to the condition that the lifted orbit closures $\overline{G_{0}\cdot \tilde{x}}$ and $\overline{G_{0}\cdot \tilde{y}}$ intersect outside $V(z)$. By the classical theory of reductive group actions, this intersection contains a unique closed $G_{0}$-orbit outside $V(z)$. Let $\tilde{u}$ be a point in this closed orbit, which necessarily lies outside $V(z)$. Applying the forward direction of Lemma~\ref{lem:4.10} to each lifting independently, there exist one-parameter subgroups $\lambda _{1}^{*}$ and $\lambda _{2}^{*}$ for which the induced lifted coactions admit polynomial coextensions such that $\displaystyle\lim_{a \to 0} \tilde{\sigma}_{\lambda_1,a}^*(\tilde{x}) = \tilde{u}$ and $\displaystyle\lim_{a \to 0} \tilde{\sigma}_{\lambda_2,a}^*(\tilde{y}) = \tilde{u}$. Because $\tilde{u} \notin V(z)$, Lemma~\ref{lem:4.11} implies that the coactions $\rho _{\lambda _{1}}$ and $\rho _{\lambda _{2}}$ admit polynomial coextensions and $\langle \chi,\lambda_1 \rangle = \langle \chi,\lambda_2 \rangle = 0$. Projecting these mappings onto the base space $\lvert X \rvert$ shows that the algebraic limits $\displaystyle\lim_{a \to 0} \sigma_{\lambda_1,a}^*(x)$ and $\displaystyle\lim_{a \to 0} \sigma_{\lambda_2,a}^*(y)$ coincide with the image of $\tilde{u}$ under the canonical projection, landing in the same closed base orbit.

Conversely, assume that there exist one-parameter subgroups $\lambda_1^*, \lambda_2^*$ for which the induced coactions admit polynomial coextensions, $\langle \chi,\lambda_1 \rangle = \langle \chi,\lambda_2 \rangle = 0$, and their algebraic limits $\displaystyle\lim_{a \to 0} \sigma_{\lambda_1,a}^*(x)$ and $\displaystyle\lim_{a \to 0} \sigma_{\lambda_2,a}^*(y)$ lie in the same closed $G_{0}$-orbit. By Lemma~\ref{lem:4.11}, since $\langle \chi, \lambda_i \rangle = 0 \le 0$, the induced lifted coactions $\tilde{\rho }_{\lambda _{1}}$ and $\tilde{\rho }_{\lambda _{2}}$ also admit polynomial coextensions. Because $\langle \chi, \lambda_i \rangle = 0$ implies that each $\hat{\tilde{\sigma}}_{\lambda_i,0}$ fixes $z$, the algebraic limits $\displaystyle\lim_{a \to 0} \tilde{\sigma}_{\lambda_1,a}^*(\tilde{x})$ and $\displaystyle\lim_{a \to 0} \tilde{\sigma}_{\lambda_2,a}^*(\tilde{y})$ remain disjoint from $V(z)$ and land in a common closed lifted orbit. By the forward direction of Lemma~\ref{lem:4.10}, these algebraic limits belong to the respective lifted orbit closures $\overline{G_{0}\cdot \tilde{x}}$ and $\overline{G_{0}\cdot \tilde{y}}$. Since their limits land in the same orbit, the closures intersect outside $V(z)$, which by the discussion following Lemma~\ref{lem:4.8} implies that the base points satisfy $x \sim y$, completing the proof.
\end{proof}

We wrap up this discussion with a few examples.

\begin{example}
Following the line of the examples in the preceding section, we consider an action of the odd multiplicative supergroup $\GG_m^{1 \vert p}$ on the affine superspace $\AA^{2 \vert p+q}$, which we assume to be given by a coaction $\rho \colon k[\AA^{2 \vert p+q}] \to k[\AA^{2 \vert p+q}] \otimes k[\GG_m^{1 \vert p}]$ taken as any of those considered there. Our goal here is to understand the structure of the relative invariants with weight $\chi_1$, associated with the character $\chi_1^* \colon k[\GG_m] \to k[\GG_m^{1 \vert p}]$ which, as discussed toward the end of \S\ref{sec:2.5}, induces the identity character $\chi_{1,0}^*$ on the underlying multiplicative group $\GG_m$. In that section, it was also observed that the odd part of the Lie superalgebra of $\GG_m^{1 \vert p}$ is spanned by the odd elements $Q_1,\dots,Q_p$, which are represented as vector fields $\partial / \partial \xi_1, \dots,\partial / \partial \xi_p$. In accordance with the general framework established in \S\ref{sec:4.1}, these elements induce odd superderivations on $k[\AA^{2 \vert p+q}]$ defined by
$$
D_i = (\mathrm{id}_{k[\mathbb{A}^{2 \vert p+q}]} \otimes Q_i) \circ \rho = \left( \mathrm{id}_{k[\mathbb{A}^{2 \vert p+q}]} \otimes \frac{\partial}{\partial \xi_i} \right) \circ \rho.
$$
Evaluating the action of these superderivations under any of the coactions under consideration, we find that they admit the explicit representation
$$ 
D_i = x \frac{\partial}{\partial \eta_i}.
$$
On the other hand, recalling definition \eqref{eq:3.7}, the superalgebra of odd infinitesimal invariants is given by the common kernel of the $D_i$. Together with Proposition~\ref{prop:4.2}, this yields
$$
k[\AA^{2 \vert p+q}]^{\GG_m^{1\vert p}, \chi_1^{n}} = k[\AA^{2 \vert p+q}]^{\GG_m,\chi_{1,0}^{n}} \cap \left( \bigcap_{i=1}^{p} \ker D_i\right).
$$
Since $\chi_{1,0}$ is the identity character, the first term in this intersection is simply
$$
k[\AA^{2 \vert p+q}]^{\GG_m,\chi_{1,0}^{n}} = \{ a \in k[\AA^{2 \vert p+q}] \mid \rho_0(a) = a \otimes t^{n} \},
$$
where, as usual, $\rho_0 \colon k[\AA^{2 \vert p+q}] \to k[\AA^{2 \vert p+q}] \otimes_k k[\GG_m]$ denotes the reduced coaction. Our task is therefore to determine $\bigcap_{i=1}^{p} \ker D_i$. But this is in fact quite straightforward: for each $i = 1,\dots,p$, since $x$ is not a zero divisor in $k[\AA^{2 \vert p+q}]$, the condition $D_i a = x \frac{\partial a}{\partial \eta_i} = 0$ forces $\frac{\partial a}{\partial \eta_i} = 0$. This implies that elements in $\bigcap_{i=1}^{p} \ker D_i$ are independent of the odd variables $\eta_1,\dots,\eta_p$. Thus,
$$
\bigcap_{i=1}^{p} \ker D_i = k[x,y,\theta_1,\dots,\theta_q].
$$
Consequently, regardless of the specific coaction under consideration, the relative invariants necessarily lie within this subalgebra. Taking the direct sum over $n \ge 0$, the $\mathbb{Z}_{\ge 0}$-graded superalgebra of relative invariants decomposes as
$$
\bigoplus_{n \ge 0} k[\AA^{2 \vert p+q}]^{\GG_m^{1\vert p}, \chi_1^n} = \bigoplus_{n \ge 0} \left( k[\AA^{2 \vert p+q}]^{\GG_m, \chi_{1,0}^n} \cap k[x,y,\theta_1,\dots,\theta_q] \right),
$$
where the explicit $\mathbb{Z}_{\ge 0}$-grading is determined by the weights assigned to the generators $x, y, \theta_1, \dots, \theta_q$ under the reduced coaction $\rho_0$.
\end{example}

\begin{example}\label{ex:4.15}
Keeping in mind the characterization established above, let us now specialize the coaction to that of Example~\ref{ex:3.10}. By definition, the resulting reduced coaction $\rho_0$ is given on generators by
$$
\begin{aligned} 
 \rho_0 (x) &= x \otimes t, \\ 
 \rho_0 (y) &= y \otimes t, \\  
 \rho_0 (\eta_i) &= \eta_i \otimes t, \\ 
 \rho_0 (\theta_j) &= \theta_j \otimes t. 
 \end{aligned}
 $$
This tells us that each of the generators $x, y, \eta_1,\dots,\eta_p, \theta_1,\dots,\theta_q$ is a relative invariant under this reduced coaction, and moreover that they each carry degree $1$ in the $\ZZ_{\ge 0}$-grading. Consequently, intersecting $k[\AA^{2 \vert p+q}]^{\GG_m, \chi_{1,0}^n}$ with $k[x,y,\theta_1,\dots,\theta_q]$ yields precisely the homogeneous component $k[x,y,\theta_1,\dots,\theta_q]_n$. Summing over all $n \ge 0$, the $\ZZ_{\ge 0}$-graded superalgebra of relative invariants is
$$
\bigoplus_{n \ge 0} k[\AA^{2 \vert p+q}]^{\GG_m^{1\vert p}, \chi_1^n} = \bigoplus_{n \ge 0} k[x,y,\theta_1,\dots,\theta_q]_n = k[x,y,\theta_1,\dots,\theta_q], 
$$
where the latter now stands for the polynomial superalgebra graded by total degree in the generators $x,y,\theta_1,\dots,\theta_q$. From this, we conclude that the GIT superquotient in this case is
$$
\AA^{2 \vert p+q} {\sslash_{\chi_1}} \GG_m^{1 \vert p} = \sProj k[x,y,\theta_1,\dots,\theta_q] = \PP^{1 \vert q}.
$$
Comparing this result with Example~\ref{ex:3.2} highlights a precise super-analogue of classical GIT behavior under the scaling action. There, the affine superquotient collapses to a single elemental geometric point $\sSpec k$ because the topological closure of every geometric ray inevitably contains the origin, preventing orbit separation. Taking relative invariants under the character $\chi_1^{*}$ bypasses this collapse entirely: the character singles out functions that separate these non-zero rays, restoring the non-trivial orbit geometry and replacing the point with the projective superspace $\PP^{1 \vert q}$. To understand this structural recovery in more detail, let us examine this GIT superquotient from a local point of view.

By definition, the $\chi_1$-semistable locus $(\AA^{2 \vert p+q})^{\uss}_{\chi_1}$ is determined by the even generators of the superalgebra of relative invariants. As established above, these are precisely the even generators $x$ and $y$, each carrying degree $1$ in the $\ZZ_{\geq 0}$-grading. It follows that $(\AA^{2 \vert p+q})^{\uss}_{\chi_1}$ is covered by the basic open subsets $D(x)$ and $D(y)$, so that
$$
(\AA^{2 \vert p+q})^{\uss}_{\chi_1} = \AA^{2 \vert p+q} \setminus V(x,y),
$$
where $V(x,y)$ is the affine subsuperspace defined by the equations $x=0$ and $y=0$. Notice, in particular, that the underlying topological space of $(\AA^{2 \vert p+q})^{\uss}_{\chi_1}$ coincides with the $\chi_{1,0}$-semistable locus of the affine plane $\AA^2$ under the scaling action:
$$
\lvert (\AA^{2 \vert p+q})^{\uss}_{\chi_1} \rvert = \AA^2 \setminus \{(0,0)\}.
$$
Now, to construct the GIT superquotient $\AA^{2 \vert p+q} {\sslash_{\chi_1}} \GG_m^{1 \vert p}$, recall from Proposition~\ref{prop:4.7} that it is characterized via the GIT superquotient morphism $\pi^{\uss}_{\chi_1} \colon (\AA^{2 \vert p+q})^{\uss}_{\chi_1} \to \AA^{2 \vert p+q} {\sslash_{\chi_1}} \GG_m^{1 \vert p}$. To describe this quotient explicitly, we compute the invariants under the full coaction $\rho$ locally on the covering basic open subsets $D(x)$ and $D(y)$.  On $D(x)$, where $x$ is invertible, we introduce the local coordinates
$$
u = \frac{y}{x}, \quad \psi_j = \frac{\theta_j}{x}.
$$
Evaluating the coaction $\rho$, a direct calculation confirms that $\rho(u) = u \otimes 1$ and $\rho(\psi_j) = \psi_j \otimes 1$. Consequently, the superalgebra of invariant functions on $D(x)$ is given by
$$
k[D(x)]^{\GG_m^{1 \vert p}} = k[u,\psi_1,\dots,\psi_q],
$$
so that the open chart of the GIT superquotient corresponding to $D(x)$ is identified with the affine superspace
$$
D(x) {\sslash} \GG_m^{1 \vert p} = \sSpec k[u,\psi_1,\dots,\psi_q] \cong \AA^{1 \vert q}.
$$
In much the same way, on $D(y)$, where $y$ is invertible, introducing the local coordinates
$$
v = \frac{x}{y}, \quad \zeta_j = \frac{\theta_j}{y},
$$
yields the invariant superalgebra
$$
k[D(y)]^{\GG_m^{1 \vert p}} = k[v,\zeta_1,\dots,\zeta_q]
$$
The open chart corresponding to $D(y)$ is thus given by another copy of the affine superspace
$$
D(y) {\sslash} \GG_m^{1 \vert p} = \sSpec k[v,\zeta_1,\dots,\zeta_q] \cong \AA^{1 \vert q}.
$$
On the intersection $D(x) \cap D(y) = D(xy)$, where both $x$ and $y$ are invertible, these two open charts are linked by the transition functions
$$
v = u^{-1}, \quad \zeta_j = u^{-1} \psi_j.
$$
These are precisely the standard transition functions defining the projective superline, yielding the global isomorphism
$$
(\AA^{2 \vert p+q})^{\uss}_{\chi_1} {\sslash} \GG_m^{1 \vert p} = (\AA^{2 \vert p+q} \setminus V(x,y)) {\sslash} \GG_m^{1 \vert p} \cong \PP^{1 \vert q}.
$$
Such a description provides the desired local realization of the GIT superquotient.
\end{example}

\begin{example}
Let us now specialize the coaction to the one considered in Example~\ref{ex:3.11}. Evaluated on the generators, the resulting reduced coaction $\rho_0$ is given by
$$
\begin{aligned} 
 \rho_0 (x) &= x \otimes t, \\ 
 \rho_0 (y) &= y \otimes t^{-1}, \\  
 \rho_0 (\eta_i) &= \eta_i \otimes t, \\ 
 \rho_0 (\theta_j) &= \theta_j \otimes t. 
 \end{aligned}
$$
Thus, all generators remain relative invariants under the reduced coaction, with $x, \eta_1,\dots,\eta_p, \theta_1,\dots,\theta_q$ carrying degree $1$, while $y$ carries degree $-1$ in the induced $\ZZ_{\geq 0}$-grading. Also, as before, intersecting $k[\AA^{2 \vert p+q}]^{\GG_m, \chi_{1,0}^n}$ with $k[x,y,\theta_1,\dots,\theta_q]$ eliminates the odd coordinates $\eta_1,\dots,\eta_p$. Now, a monomial $x^i y^j \theta_{j_1} \dots \theta_{j_s}$ in $k[x,y,\theta_1,\dots,\theta_q]$ belongs to the $n$-th graded component of the superalgebra of relative invariants if and only if its total degree satisfies $i - j + s = n \geq 0$. When $i \geq j$, such a monomial factors directly into powers of $u = xy$, $x$, and the odd generators $\theta_j$. Conversely, if $i < j$, the condition $n \ge 0$ forces $s \geq j - i$, allowing us to pair $j - i$ factors of $y$ with odd variables to form the degree-zero elements $\zeta_j = y \theta_j$, while the remaining powers of $y$ collapse with $x$ into $u$. As a result, the entire $\ZZ_{\geq 0}$-graded superalgebra of relative invariants is generated by the degree-zero elements $u$ and $\zeta_j$, together with the degree-one elements $x$ and $\theta_j$, subject to the relations $x \zeta_j = u \theta_j$ for $j = 1, \dots, q$. In other words,
$$
\bigoplus_{n \geq 0} k[\AA^{2 \vert p+q}]^{\GG_m^{1 \vert p}, \chi_{1}^n} = \frac{k[u, x, \zeta_1,\dots,\zeta_q, \theta_1,\dots,\theta_q]}{(u \theta_j - x \zeta_j \mid 1 \le j \le q)}.
$$
The resulting GIT superquotient is therefore
$$
\AA^{2 \vert p+q} {\sslash_{\chi_1}} \GG_m^{1 \vert p} = \sProj \left(  \frac{k[u, x, \zeta_1,\dots,\zeta_q, \theta_1,\dots,\theta_q]}{(u \theta_j - x \zeta_j \mid 1 \le j \le q)} \right).
$$
Unlike the previous example, where the global supergeometry simplifies directly, making the precise nature of this superscheme explicit requires examining its structure locally. Let us thus consider the $\chi_1$-semistable locus $(\AA^{2 \vert p+q})^{\uss}_{\chi_1}$. Since the positive-degree component of the superalgebra of relative invariants is generated in degree $1$ by $x$ and $\theta_1, \dots, \theta_q$, this locus is determined entirely by the non-vanishing of the even generator $x$. Consequently,
$$
(\AA^{2 \vert p+q})^{\uss}_{\chi_1} = D(x),
$$
so that, on the underlying topological space, this locus coincides with the complement of the coordinate line $x=0$, namely 
$$
\lvert (\mathbb{A}^{2 \vert p+q})^{\mathrm{ss}}{\chi_1} \rvert = \mathbb{A}^2 \setminus V(x).
$$
As the semistable locus is itself the single basic open subset $D(x)$, the GIT superquotient morphism $\pi^{\uss}_{\chi_1}$ maps $D(x)$ directly onto the single basic open chart $D_+(x)$. The GIT superquotient therefore reduces to the spectrum of the degree-zero component of the localized superalgebra:
$$
\AA^{2 \vert p+q} {\sslash_{\chi_1}} \GG_m^{1 \vert p} \cong \sSpec ( k[\AA^{2 \vert p+q}]^{\GG_m^{1 \vert p}, \chi_1}[x^{-1}] )_0.
$$
To explicitly identify this degree-zero component, we introduce the coordinates
$$
u = xy, \quad \psi_j = \frac{\theta_j}{x},
$$
each carrying degree $0$ in the $\ZZ_{\geq 0}$-grading. The relation $x \zeta_j = u \theta_j$ translates directly into $\zeta_j = u \psi_j$, rendering the generators $\zeta_1, \dots, \zeta_q$ redundant. Thus, the degree-zero component is freely generated as a superalgebra by $u$ together with the odd elements $\psi_1, \dots, \psi_q$, yielding
$$
( k[\AA^{2 \vert p+q}]^{\GG_m^{1 \vert p}, \chi_1}[x^{-1}] )_0 = k[u,\psi_1,\dots,\psi_q].
$$
We thus conclude that the GIT superquotient is identified with an affine superspace
$$
\AA^{2 \vert p+q} {\sslash_{\chi_1}} \GG_m^{1 \vert p} \cong \sSpec k[u, \psi_1, \dots, \psi_q] = \mathbb{A}^{1 \vert q}.
$$
Remarkably, this GIT superquotient matches the affine superquotient computed in Example~\ref{ex:3.11}, despite the two constructions operating on distinct underlying domains. Geometrically, the character $\chi_1^*$ discards the destabilized coordinate line $x = 0$ together with the origin, restricting the action to the basic open chart $D(x)$. On this semistable locus, the local degree-zero generators $\psi_j = \theta_j / x$ and the invariant generators $\zeta_j = y \theta_j$ are linked by the relation $\zeta_j = u \psi_j$. Since the hyperbola parameter $u = xy$ is non-zero on generic closed orbits, this relation induces an isomorphism between the localized degree-zero superalgebra $k[u, \psi_1, \dots, \psi_q]$ and the invariant superalgebra $k[u, \zeta_1, \dots, \zeta_q]$. The GIT reduction thus recovers the affine superspace $\AA^{1 \vert q}$ by parameterizing the family of closed hyperbolic orbits away from the origin, preserving the odd dimension $q$ across both quotient constructions. 
\end{example}

\begin{example}
Let us finally specialize to the coaction introduced in Example~\ref{ex:3.12}. On the generators, the resulting reduced coaction $\rho_0$ acts as
$$
\begin{aligned} 
 \rho_0 (x) &= x \otimes t, \\ 
 \rho_0 (y) &= y \otimes t, \\  
 \rho_0 (\eta_i) &= \eta_i \otimes t^{-1}, \\ 
 \rho_0 (\theta_j) &= \theta_j \otimes t^{-1}. 
 \end{aligned}
$$
In particular, all generators are relative invariants, with $x, y$ carrying degree $1$ and $\eta_1,\dots,\eta_p, \theta_1,\dots,\theta_q$ carrying degree $-1$ in the induced $\ZZ_{\geq 0}$-grading. And again, when one intersects $k[\AA^{2\vert p+q}]^{\GG_m, \chi_{1,0}^n}$ with $k[x,y,\theta_1,\dots,\theta_q]$, the odd coordinates $\eta_1,\dots,\eta_p$ are eliminated. Now, a monomial $x^i y^j \theta_{j_1} \dots \theta_{j_s}$ in $k[x,y,\theta_1,\dots,\theta_q]$ lies in the $n$-th graded component of the superalgebra of relative invariants precisely when its total degree satisfies $i+j-s = n \geq 0$. This condition implies that the number of even factors $x$ and $y$ is always at least the number of odd factors $\theta_j$. As a result, each odd generator $\theta_{j_\nu}$ in the monomial can be paired with a factor of $x$ or $y$ to form the degree-zero elements $\psi_j = x \theta_j$ and $\zeta_j = y \theta_j$. The remaining unpaired factors of $x$ and $y$ then account for the positive degree $n$. Consequently, the $\mathbb{Z}_{\ge 0}$-graded superalgebra of relative invariants is generated by the degree-zero elements $\psi_1,\dots,\psi_q$ and $\zeta_1,\dots,\zeta_q$, together with the degree-one elements $x$ and $y$, subject to the defining relations $y \psi_j = x \zeta_j$ for $j = 1, \dots, q$. That is,
$$
\bigoplus_{n \geq 0} k[\AA^{2 \vert p+q}]^{\GG_m^{1 \vert p}, \chi_1^n} = \frac{k[x,y,\psi_1,\dots,\psi_q, \zeta_1,\dots,\zeta_q]}{(y \psi_j - x \zeta_j \mid 1 \le j \le q)}.
$$
Thus, the corresponding GIT superquotient takes the form
$$
\AA^{2 \vert p+q} {\sslash_{\chi_1}} \GG_m^{1 \vert p} = \sProj \left( \frac{k[x,y,\psi_1,\dots,\psi_q, \zeta_1,\dots,\zeta_q]}{(y \psi_j - x \zeta_j \mid 1 \le j \le q)} \right). 
$$
As in the previous example, the precise geometry of this superscheme is best understood by examining its local structure. Since the positive-degree component of the superalgebra of relative invariants is generated in degree $1$ by the even elements $x$ and $y$, a point fails to be $\chi_1$-semistable if and only if both $x$ and $y$ vanish. On $V(x,y)$, any non-zero monomial in $k[x,y,\theta_1,\dots,\theta_q]$ consists solely of odd factors $\theta_j$, carrying non-positive degree $-s \leq 0$. As a result, the $\chi_{1}$-semistable locus $(\AA^{2 \vert p+q})^{\uss}_{\chi_1}$ is given by the union of the two basic open subsets $D(x)$ and $D(y)$, identifying it as
$$ 
(\AA^{2 \vert p+q})^{\uss}_{\chi_1} = \AA^{2 \vert p+q} \setminus V(x,y), 
$$
whose underlying reduced scheme is the punctured affine plane,
$$
\lvert (\AA^{2 \vert p+q})^{\uss}_{\chi_1} \rvert = \AA^{2} \setminus \{(0,0)\}.
$$
In turn, the GIT superquotient morphism $\pi^{\uss}_{\chi_1}$ maps these basic open sets directly onto the basic open charts $D_+(x)$ and $D_+(y)$. Reconstructing the global GIT superquotient thus reduces to gluing these two charts across their overlap. On the chart $D_+(x)$, which is identified with the spectrum of $(k[\AA^{2 \vert p+q}]^{\GG_m^{1 \vert p}, \chi_1^n}[x^{-1}])_0$, we introduce the local coordinates
$$
u = \frac{y}{x}, \quad \psi_j = x \theta_j.
$$
In this localized superalgebra, the relation $y \psi_j = x \zeta_j$ simplifies to $\zeta_j = u \psi_j$, leaving $u$ and $\psi_1,\dots,\psi_q$ as free generators. Thus,
$$
(k[\AA^{2 \vert p+q}]^{\GG_m^{1 \vert p}, \chi_1^n}[x^{-1}])_0 = k[u,\psi_1,\dots,\psi_q]. 
$$
meaning that
$$
D_+(x) \cong \sSpec k[u,\psi_1,\dots,\psi_q] = \AA^{1 \vert q}. 
$$
Symmetrically, on the chart $D_+(y)$, identified with the spectrum of $(k[\AA^{2 \vert p+q}]^{\GG_m^{1 \vert p}, \chi_1^n}[y^{-1}])_0$, we introduce the local coordinates
$$
v = \frac{x}{y}, \quad \zeta_j = y \theta_j,
$$
under which the relation reduces to $\psi_j = v \zeta_j$, leaving $v$ and $\zeta_1,\dots,\zeta_q$ as free generators of the degree-zero component:
$$
(k[\AA^{2 \vert p+q}]^{\GG_m^{1 \vert p}, \chi_1^n}[y^{-1}])_0 = k[v,\zeta_1,\dots,\zeta_q]. 
$$
This yields
$$
D_+(y) \cong \sSpec k[v,\zeta_1,\dots,\zeta_q] = \AA^{1 \vert q}.
$$
Comparing the local coordinates of both charts, the transition functions on the overlap $D_+(x) \cap D_+(y)$ are
$$
v = u^{-1},\quad \zeta_j = u \psi_j.
$$
While the reduced scheme underlying the superscheme glued via these transition functions is the projective line $\PP^1$, these transition functions exhibit the odd coordinates as sections of the vector bundle $\Ocal_{\PP^1}(-1)^{\oplus q}$ with a degree shift. In this way, the GIT superquotient is identified as
$$
\AA^{2 \vert p+q} {\sslash_{\chi_1}} \GG_m^{1 \vert p}  \cong \Spec_{\PP^1} (\Sym_{\Ocal_{\PP^1}} \Ocal_{\PP^1}(1)^{\oplus q}) =  \tot (\Pi \Ocal_{\PP^1}(-1)^{\oplus q}),
$$
where $\Pi$ denotes the parity-shifting functor. Contrasting this GIT superquotient with that of Example~\ref{ex:4.15} highlights a fundamental geometric distinction: while that example produces the standard projective superspace $\PP^{1\vert q}$, the present one yields a ``twisted'' variant of it. This divergence stems entirely from the way the respective coactions act on the odd generators, the action on the even generators being identical in both cases.
\end{example}


\end{document}